\documentclass[11pt]{article}
\usepackage{amsmath,amsthm,amssymb}
\usepackage[hyperfootnotes=false]{hyperref}
\usepackage{color}
\usepackage{cancel}
\usepackage{titlesec}
\setcounter{secnumdepth}{4}
\titleformat{\paragraph}
{\normalfont\normalsize\bfseries}{\theparagraph}{1em}{}
\titlespacing*{\paragraph}
{0pt}{3.25ex plus 1ex minus .2ex}{1.5ex plus .2ex}

\def\titlerunning#1{\gdef\titrun{#1}}
\makeatletter
\def\author#1{\gdef\autrun{\def\and{\unskip, }#1}\gdef\@author{#1}}
\def\address#1{{\def\and{\\\hspace*{18pt}}\renewcommand{\thefootnote}{}%
\footnote {#1}}%
\markboth{\autrun}{\titrun}}
\makeatother
\def\email#1{\hspace*{4pt}{\em e-mail}: #1}
\def\MSC#1{{\renewcommand{\thefootnote}{}%
\footnote{\emph{Mathematics Subject Classification (2020):} #1}}}
\def\keywords#1{\par\medskip
\noindent\textbf{Keywords:} #1}


\newtheorem{theorem}{Theorem}[section]

\newtheorem{prop}[theorem]{Proposition}
\newtheorem{cor}[theorem]{Corollary}
\newtheorem{lemma}[theorem]{Lemma}

\newtheorem{defin}[theorem]{Definition}


\newtheorem*{mtheorem}{Main Theorem}


\theoremstyle{definition}

\newtheorem{prob}[theorem]{Problem}

\newtheorem{remark}[theorem]{Remark}



\numberwithin{equation}{section}

\frenchspacing

\textwidth=16cm
\textheight=23cm
\parindent=16pt
\oddsidemargin=0cm
\evensidemargin=0cm
\topmargin=-0.5cm



\def\cB{\mathcal B}
\def\cC{\mathcal C}
\def\cD{\mathcal D}
\def\cE{\mathcal E}
\def\cF{\mathcal F}

\def\cH{\mathcal H}
\def\cI{\mathcal I}
\def\cL{\mathcal L}

\def\cO{\mathcal O}
\def\cP{\mathcal P}
\def\cQ{\mathcal Q}

\def\cR{\mathcal R}
\def\cS{\mathcal S}

\def\cX{\mathcal X}
\def\cY{\mathcal Y}
\def\cZ{\mathcal Z}
\def\PG{{\rm PG}}

\def\GF{{\rm GF}}

\def\PGL{{\rm PGL}}

\def\GL{{\rm GL}}

\def\s{\boldsymbol s}
\def\m{\boldsymbol m}


\begin{document}


\baselineskip=16pt

\titlerunning{}

\title{On cutting blocking sets and their codes}

\author{Daniele Bartoli
\and
Antonio Cossidente
\and
Giuseppe Marino 
\and 
Francesco Pavese}

\date{}

\maketitle

\address{
D. Bartoli: Dipartimento di Matematica e Informatica, Universit{\`a} di Perugia, Perugia, Italy; \email{daniele.bartoli@unipg.it}
\and 
A. Cossidente: Dipartimento di Matematica, Informatica ed Economia, Universit{\`a} degli Studi della Basilicata, Contrada Macchia Romana, 85100, Potenza, Italy; \email{antonio.cossidente@unibas.it}
\and
G. Marino: Dipartimento di Matematica e Applicazioni ``Renato Caccioppoli'', Universit{\`a} degli Studi di Napoli ``Federico II'', Complesso Universitario di Monte Sant'Angelo, Cupa Nuova Cintia 21, 80126, Napoli, Italy; \email{giuseppe.marino@unina.it}
\and
F. Pavese: Dipartimento di Meccanica, Matematica e Management, Politecnico di Bari, Via Orabona 4, 70125 Bari, Italy; \email{francesco.pavese@poliba.it}
}


\MSC{Primary 51E21; 94B05. Secondary 94B25; 51E20.}

\begin{abstract}
Let $\PG(r, q)$ be the $r$--dimensional projective space over the finite field $\GF(q)$. A set $\cX$ of points of $\PG(r, q)$ is a cutting blocking set if for each hyperplane $\Pi$ of $\PG(r, q)$ the set $\Pi \cap \cX$ spans $\Pi$. Cutting blocking sets give rise to saturating sets and minimal linear codes and those having size as small as possible are of particular interest. We observe that from a cutting blocking set obtained in \cite{FS}, by using a set of pairwise disjoint lines, there arises a minimal linear code whose length grows linearly with respect to its dimension. We also provide two distinct constructions: a cutting blocking set of $\PG(3, q^3)$ of size $3(q+1)(q^2+1)$ as a union of three pairwise disjoint $q$--order subgeometries and a cutting blocking set of $\PG(5, q)$ of size $7(q+1)$ from seven lines of a Desarguesian line spread of $\PG(5, q)$. In both cases the cutting blocking sets obtained are smaller than the known ones. As a byproduct we further improve on the upper bound of the smallest size of certain saturating sets and on the minimum length of a minimal $q$--ary linear code having dimension $4$ and $6$. 

\keywords{Cutting blocking sets, minimal codes, saturating sets, covering codes.}
\end{abstract}

\section{Introduction}
Let $q = p^h$, where $p$ is a prime and $h$ is a positive integer. Let $\PG(r, q)$ be the $r$--dimensional projective space over the finite field $\GF(q)$. We will denote by $(X_1,X_2,\ldots,X_{r+1})$ the homogeneous projective coordinates of a point of $\PG(r,q)$. A set $\cX$ of points of $\PG(r, q)$ is said to be a {\em $t$--fold blocking set} if every hyperplane $\Pi$ of $\PG(r, q)$ meets $\cX$ in at least $t$ points. 
\begin{defin}
A set of points $\cX \subset \PG(r, q)$ is a {\em cutting blocking set}, if for each hyperplane $\Pi$ of $\PG(r, q)$, the set $\Pi \cap \cX$ spans $\Pi$, i.e.,  $\Pi \cap \cX$ is not contained in any hyperplane of $\Pi$. 
\end{defin}
A cutting blocking set is said to be {\em minimal} if it is minimal with respect to set theoretical inclusion. If $\cX$ is a cutting blocking set, then any  hyperplane $\Pi$ of $\PG(r, q)$ is spanned by $\Pi \cap \cX$  and hence $|\Pi \cap \cX| \ge r$ and $\cX$ is an $r$--fold blocking set. The term {\em cutting blocking set} has been coined recently in \cite{BB}, see also \cite[Proposition 3.3]{ABN}, but such a substructure has been investigated earlier by several authors. In \cite{DGMP} a cutting blocking set is referred to as an {\em $r$--fold strong blocking set}, whereas in \cite{FS} a cutting blocking set is called a {\em generator set}. In \cite{DGMP}, some pointsets of $\PG(r, q)$, called $\rho$--saturating sets, were studied.
\begin{defin}
A set $\cY$ of points of $\PG(r, q)$ is said to be {\em $\rho$--saturating} if for any point $P \in \PG(r, q)$ there exist $\rho + 1$ points of $\cY$ spanning a subspace of $\PG(r, q)$ containing $P$, and $\rho$ is the smallest value with such property.
\end{defin}
In particular the authors pointed out that, by embedding $\PG(r, q)$ in $\PG(r, q^r)$, a cutting blocking set of $\PG(r, q)$ is an $(r-1)$--saturating set of $\PG(r, q^r)$ \cite[Theorem 3.2]{DGMP}. In this context, saturating sets of the smallest size are interesting as extremal objects and $\s_q(r, \rho)$ denotes the smallest size of a $\rho$--saturating set of $\PG(r, q)$. For more recent results on $\rho$--saturating sets of $\PG(r, q)$ the reader is referred to \cite{DMP, Denaux}. In \cite{FS} the authors, by investigating an idea introduced in \cite{HPT}, studied lines of $\PG(r, q)$ that are said to be in higgledy--piggledy arrangement. 
\begin{defin}
Let $\cL$ be a lineset of  $\PG(r, q)$. The lines of $\cL$ are said to be in {\em higgledy--piggledy arrangement} if the set of points covered by the lines of $\cL$ forms a cutting blocking set.
\end{defin}
Of particular interest is to look for the smallest size that a set consisting of lines in hyggledy--piggledy arrangement may have. It is not difficult to show that in $\PG(2, q)$ this number is three and it is known that in $\PG(3, q)$ the smallest set of lines in higgledy--piggledy arrangement has to contain four (pairwise disjoint) lines; see \cite{FS}. In $\PG(4, q)$, if $q$ is large enough, it is possible to find a set of six pairwise disjoint lines in higgledy--piggledy arrangement \cite[Proposition 12]{BKMP}. More generally, it is known that a set of lines of $\PG(r, q)$ in higgledy--piggledy arrangement has to contain at least $\lfloor\frac{r}{2}\rfloor + r$ lines, if $q \ge \lfloor\frac{r}{2}\rfloor + r$, and that there is a set of $2r-1$ pairwise disjoint lines with the required property whenever $q > 2r-1$; see \cite[Theorem 14, Theorem 20, Theorem 24]{FS}. It follows that if $q > 2r-1$, there exists a cutting blocking set in $\mathrm{PG}(r,q)$ of size at most $(2r-1)(q+1)$. Thus 
$$
\frac{r}{e}q + \frac{r-1}{2} \le \s_{q^r}(r, r-1) \le (2r-1)(q+1), 
$$
which improves on the known upper bounds for the size of the smallest saturating set. In the previous formula the lower bound arises from \cite[Lemma 4.0.1]{Denaux} and $e$ is the Euler's number. 

Here we deal with cutting blocking sets. 
The main achievements of this paper are summarized in the following theorem.
\begin{mtheorem}
\begin{enumerate}
    \item[(i)] In $\mathrm{PG}(3,q^3)$ there exists a minimal cutting blocking set of size $3(q+1)(q^2+1)$.
    \item [(ii)] In $\mathrm{PG}(5,q)$ there exists a minimal cutting blocking set of size $7(q+1)$.
\end{enumerate}
\end{mtheorem}
The paper is organized as follows. In Section \ref{sub} we construct a minimal cutting blocking set of $\PG(3, q^3)$ of size $3(q+1)(q^2+1)$ obtained by glueing together three suitable pairwise disjoint $q$--order subgeometries. In Section \ref{hp} we show that there is a set of seven lines of $\PG(5, q)$ in higgledy--piggledy arrangement. There arises a minimal cutting blocking set of $\PG(5, q)$ of size $7(q+1)$. As a byproduct we obtain the following improvements on the upper bound of the smallest size of a $2$--saturating set of $\PG(3, q^9)$ and of a $4$--saturating set of $\PG(5, q^5)$: 
\begin{align*}
& \frac{3}{e}q^3 + 1 \le \s_{q^9}(3, 2) \le 3(q+1)(q^2+1), \\
& \frac{5}{e}q + 2 \le \s_{q^5}(5, 4) \le 7(q+1). \\
\end{align*}

\subsection{Minimal and covering linear codes}

The concepts of cutting blocking set and saturating set are of interest not only from a geometrical point of view, but they also have applications in coding theory. For a vector $u = (u_1, \dots, u_n) \in \GF(q^n)$, its {\em support} is the set $supp(u) = \{i \;\; | \;\; u_i \ne 0\}$ and the {\em Hamming weight} $w(u)$ of $u$ is the cardinality of its support. The {\em Hamming distance} on $\GF(q)^n$ is defined as $d(u, v) = |supp(u - v)|$, for every pair of vectors $u, v \in \GF(q)$. A {\em $q$--ary linear code} of dimension $k$ and length $n$, or an $[n, k]_q$ code, $\cC$ is a $k$--dimensional vector subspace of $\GF(q)^n$. The elements of $\cC$ are called {\em codewords}. A {\em generator matrix} of $\cC$ is a matrix whose rows form a basis of $\cC$ as a vector space over $\GF(q)$. The {\em minimum distance of $\cC$} is $d = \min\{w(u) \;\; | \;\; u \in \cC, u \ne 0\}$. Let $u, v \in \GF(q)^n$. The vector $u$ is {\em $\rho$--covered by $v$} if $d(u, v) \le \rho$. The {\em covering radius} of a code $\cC$ is the smallest integer $\rho$ such that every vector of $\GF(q^n)$ if $\rho$--covered by at least one codeword of $\cC$. If $d$ or $\rho$ are needed, then $\cC$ is said to be an $[n, k, d]_q$ code or an $[n, k]_q \rho$ code. The weight distribution of $\cC$ is the sequence $A_0(\cC), \dots, A_n(\cC)$, where $A_i(\cC) = |\{u \in \cC \;\; | \;\; w(u) = i\}|$. For $u = (u_1, \dots, u_n), v = (v_1, \dots, v_n) \in \GF(q)^n$, let $u \cdot v = \sum_{i = 1}^n u_iv_i$ be the Euclidean inner product between $u$ and $v$. For a code $\cC$, its {\em dual code} is $\cC^\perp = \{v \in \GF(q)^n \;\; | \;\; v \cdot c = 0, \forall c \in \cC\}$. The dimension of the dual code $\cC^\perp$ or the codimension of $\cC$ is $n - k$. Any matrix which is a generator matrix of $\cC^\perp$ is called a {\em parity check matrix} of $\cC$. If $\cC$ is a linear $[n, k]_q \rho$ code, with parity check matrix $N$, its covering radius is the smallest $\rho$ such that every $w \in \GF(q)^{n - k}$ can be written as a linear combination of at most $\rho$ columns of $N$. For an introduction to coverings of vector spaces over finite fields and to the concept of code covering radius, see \cite{CHLL}.

The representatives of the points of a saturating set of $\PG(r, q)$ can be considered as columns of a parity check matrix of a $q$--ary linear code of codimension $r+1$. In particular, a $\rho$--saturating set of $\PG(r, q)$ of size $n$ corresponds to a parity check matrix of an $[n, n - (r+1)]_q (\rho+1)$ code; see \cite{DGMP, DMP} and references therein. 

Let $\cC$ be an $[n, k]_q$ linear code with generator matrix $M$. The code $\cC$ is called {\em non--degenerate} if there is no $i$, with $1 \le i \le n$, such that $u_i = 0$, for all $u \in \cC$. 
A non--zero codeword $u \in \cC$ is called {\em minimal} if every codeword $u' \in \cC$, with $supp(u') \subseteq supp(u)$ is a multiple of $u$ and $\cC$ is {\em minimal} if all its codewords are minimal. A minimal code $\cC$ is called {\em reduced} if for every $i$, with $1 \le i \le n$, the code obtained by deleting the same $i$--th coordinate in each codeword is not minimal. 
 
A {\em projective $[n, r+1, d]_q$ system} $\cZ$ is a set of $n$ points (counted with multiplicity) of $\PG(r, q)$ that do not all lie on a hyperplane and such that
$$
d = n - \max\{|H \cap \cZ|  \;\; : \;\; H \mbox{ hyperplane of } \PG(r, q)\}.
$$
There is a well--known correspondence between non--degenerate $[n, r+1, d]_q$ linear codes and projective $[n, r+1, d]_q$ systems. Indeed, the points of $\PG(r, q)$ represented by the columns of a generator matrix $M$ of an $[n, r+1]_q$ code form a set of $n$ points (counted with multiplicity) of $\PG(r, q)$. Viceversa, the code generated by the matrix having the representatives of the points of a projective $[n, r+1, d]_q$ system $\cZ$ as columns, gives an $[n, r+1]_q$ linear code. Moreover, for any non--zero vector $u = (u_1, u_2, \dots, u_{r+1}) \in \GF(q)^{r+1}$, the hyperplane of $\PG(r, q)$ with equation $u_1X_1 + u_2X_2 + \dots + u_{r+1} X_{r+1} = 0$ contains $|\cZ| - w$ points of $\cZ$ if and only if the codeword $u M$ has weight $w$. For more details the reader is referred to \cite{TV}.

In this setting it has been established a correspondence between $[n, r+1, d]_q$ minimal linear codes and projective $[n, r+1, d]_q$ systems that are cutting blocking sets. Furthermore reduced minimal $[n, r+1]_q$ linear codes are equivalent to minimal cutting blocking sets of $\PG(r, q)$ of size $n$; see \cite[Theorem 3.4]{ABN} and \cite[Theorem 14]{Tang}. 

In the context of minimal codes, one of the main issue is to provide explicit constructions of families of minimal codes of short length for a given dimension and in particular to construct minimal codes whose length grows linearly with respect to their dimension; see \cite[Problem 2]{ABN}, \cite{ABNR}, and \cite[Open Problem 24]{Tang}. In this regard, if $\m(r+1, q)$ denotes the minimum length of a minimal $q$--ary linear code having dimension $r+1$, from the construction of $2r-1$ pairwise disjoint lines of $\PG(r, q)$, $q > 2r-1$, in hyggledy--piggledy arrangement provided in \cite{FS}, the following upper bound can be derived:
$$
r(q + 1) \le \m(r+1, q) \le (2r - 1)(q + 1).
$$ 
The lower bound in the previous formula follows from \cite[Theorem 2.14]{ABNR}. Moreover, from the constructions of cutting blocking sets presented here, we obtain the following improvements:
\begin{center}
\begin{tabular}{rcll}
$3(q^3 + 1)\le$ &\hspace{-0.3 cm}$ \m(4, q^3) $& \hspace{-0.3 cm}$\le3(q + 1)(q^2+1)$,& see Theorem \ref{Th:mainPG3q^3}; \\
$5(q + 1)\le$ &\hspace{-0.3 cm}$\m(6, q) $& \hspace{-0.3 cm}$\le7(q + 1)$, & see Proposition \ref{Prop:mainPG5q}.
\end{tabular}
\end{center}

\section{Cutting blocking sets from subgeometries}\label{sub}

In this section we construct a cutting blocking set of $\PG(3, q^3)$ of size $3(q+1)(q^2+1)$ obtained by glueing together three suitable pairwise disjoint $q$--order subgeometries.

\subsection{Clubs and Splashes of $\PG(1, q^3)$}

Here we consider certain pointsets of $\PG(1, q^3)$, called clubs and splashes. These sets have been investigated by several authors \cite{BMZZ,BJ, BJ1, BJ2, DonatiDurante, FerretStorme, LV, LuMaPoTr2014, LuPo2004}. Before recalling their definitions and summarizing some of their properties, we mention the following well known results. 

\begin{lemma}[\cite{D}]\label{demp}
In $\PG(2, q^3)$, let $\pi_0$ be a $q$--order subplane and let $H$ be the stabilizer of $\pi_0$ in $\PGL(3, q^3)$. The group $H$ has three orbits on the points of $\PG(2, q^3)$: 
\begin{itemize}
\item $\pi_0$;
\item $\cO_2'$ consisting of the $q(q^2-1)(q^2+q+1)$ points of $\PG(2, q^3)$ lying on exactly one extended subline of $\pi_0$;
\item their complement $\cO_3'$ of size $q^3(q-1)(q^2-1)$. 
\end{itemize}
The group $H$ has three orbits on the lines of $\PG(2, q^3)$: 
\begin{itemize}
\item the $q^2+q+1$ extended sublines of $\pi_0$; 
\item $\cL_2'$ consisting of the $q(q^2-1)(q^2+q+1)$ lines of $\PG(2, q^3)$ intersecting $\pi_0$ in one point;
\item their complement $\cL_3'$ of size $q^3(q-1)(q^2-1)$ formed by lines disjoint from $\pi_0$.
\end{itemize}
\end{lemma}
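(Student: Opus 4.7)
The idea is to work in coordinates with $\pi_0=\PG(2,q)\subset\PG(2,q^3)$, so that $H=\PGL(3,q)$, and to exploit the Frobenius automorphism $\sigma\colon x\mapsto x^q$, which acts coordinate-wise on $\PG(2,q^3)$ with fixed locus $\pi_0$ and commutes with $H$. The key dichotomy is that for $P\notin\pi_0$ the $\sigma$-orbit $\{P,P^\sigma,P^{\sigma^2}\}$ either spans a $\sigma$-invariant line---necessarily the $\GF(q^3)$-extension of a line of $\pi_0$, putting $P$ into $\cO_2'$---or forms a triangle, putting $P$ into $\cO_3'$. This realises $\pi_0$, $\cO_2'$ and $\cO_3'$ as three manifestly $H$-invariant subsets whose union is $\PG(2,q^3)$, and the dual dichotomy applied to the $\sigma$-orbit of a line yields the three $H$-invariant linesets.

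Next I would prove transitivity on each piece. For $\pi_0$ this is the classical $2$-transitivity of $\PGL(3,q)$ on $\PG(2,q)$. For $\cO_2'$, since $H$ is already transitive on the $q^2+q+1$ extended sublines, it suffices to show that $\PGL(2,q)$ acts transitively on the $q^3-q$ points of $\PG(1,q^3)\setminus\PG(1,q)$: a non-identity $M\in\GL(2,q)$ fixing such a point $P=[v]$ would have $v$ as an eigenvector with eigenvalue $\lambda$ satisfying the degree-two characteristic polynomial of $M$ over $\GF(q)$, so $\lambda\in\GF(q)\cup\GF(q^2)$ and the corresponding eigenspace is defined over $\GF(q^2)$; since $\PG(1,q^2)\cap\PG(1,q^3)=\PG(1,q)$, this forces $P\in\PG(1,q)$, a contradiction. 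Hence the action is regular.

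The main obstacle is transitivity on $\cO_3'$, which I would handle constructively. Pick representatives $v,w\in\GF(q^3)^3$ of $P,Q\in\cO_3'$: the triangle condition says that $\{v,v^\sigma,v^{\sigma^2}\}$ and $\{w,w^\sigma,w^{\sigma^2}\}$ are $\GF(q^3)$-bases, so there is a unique $\GF(q^3)$-linear map $M$ with $M(v^{\sigma^i})=w^{\sigma^i}$ for $i=0,1,2$. Then $M^\sigma(v^{\sigma^i})=(M(v^{\sigma^{i-1}}))^\sigma=(w^{\sigma^{i-1}})^\sigma=w^{\sigma^i}=M(v^{\sigma^i})$, so $M^\sigma=M$, i.e.\ $M\in\GL(3,q)$ and $M(P)=Q$. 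The orbit sizes then follow from $|\pi_0|=q^2+q+1$, from the fact that distinct extended sublines meet only in $\pi_0$ (giving $|\cO_2'|=(q^2+q+1)(q^3-q)$), and by subtraction from $|\PG(2,q^3)|=q^6+q^3+1$.

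For the line orbits the most economical route is self-duality: the dual plane of $\PG(2,q^3)$ is again $\PG(2,q^3)$, and the dual of a $q$-order subplane is a $q$-order subplane with the same stabiliser $H$, so the above trichotomy transfers verbatim to lines, producing the extended sublines of $\pi_0$, the lines meeting $\pi_0$ in exactly one point ($\cL_2'$), and the lines disjoint from $\pi_0$ ($\cL_3'$), with the claimed cardinalities.
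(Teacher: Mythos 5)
The paper does not actually prove this lemma: it is quoted verbatim from Dempwolff \cite{D}, so there is no internal proof to compare against, and your argument has to be judged on its own. On that basis it is correct and essentially complete. The Frobenius dichotomy (the $\sigma$-orbit $\{P,P^\sigma,P^{\sigma^2}\}$ of a point $P\notin\pi_0$ is either contained in a $\sigma$-invariant line, necessarily an extended subline, or is a triangle) correctly identifies $\pi_0$, $\cO_2'$, $\cO_3'$ as $H$-invariant classes; the descent construction of $M$ with $M(v^{\sigma^i})=w^{\sigma^i}$, forcing $M^\sigma=M$ and hence $M\in\GL(3,q)$, is a clean and valid proof of transitivity on $\cO_3'$; the regularity count for $\PGL(2,q)$ on $\PG(1,q^3)\setminus\PG(1,q)$ settles $\cO_2'$; the cardinalities and the dual trichotomy for lines then come out exactly as stated. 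Two points deserve tightening. First, the identification $H=\PGL(3,q)$ is asserted, not proved; it is standard (a projectivity of $\PG(2,q^3)$ stabilizing $\pi_0$ preserves cross-ratios, hence induces a projectivity of $\pi_0$, and an element fixing $\pi_0$ pointwise fixes a frame of $\PG(2,q^3)$ and is the identity), but note you can also bypass it: the three point classes and three line classes are $H$-invariant directly from their geometric definitions, and transitivity of the subgroup $\PGL(3,q)\le H$ on each class already forces them to be full $H$-orbits. Second, in the semiregularity argument ``non-identity $M\in\GL(2,q)$'' should read ``non-scalar $M$'', and the cleanest way to finish is to observe that the eigenvalue lies in $\GF(q^2)\cap\GF(q^3)=\GF(q)$, so the (one-dimensional) eigenspace of the $\GF(q)$-matrix $M-\lambda I$ is spanned by a vector over $\GF(q)$ and the fixed point lies in $\PG(1,q)$; as written, the step from ``eigenspace defined over $\GF(q^2)$'' to $P\in\PG(1,q)$ is slightly elliptical, though the intended argument is sound.
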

Let $\ell_1 \in \cL_2'$, $\ell_2 \in \cL_3'$. Let $R_1$ be a point of $\cO_2'$, where $r$ is the unique extended subline of $\pi_0$ containing $R_1$, and let $R_2$ be a point of $\cO_3'$. In geometric terms, a club of $\PG(1, q^3)$ can be defined in two distinct ways.
\begin{itemize}
\item[{\em i)}] {\em By extension.} By extending the sublines of $\pi_0$ to lines of $\PG(2, q^3)$ and intersecting these with the line $\ell_1$, one obtains a {\em club of $\ell_1$ with head $\ell_1 \cap \pi_0$}.  
\item[{\em ii)}] {\em By projection.} By projecting the points of $\pi_0$ from $R_1$ onto a line $m$ not containing $R_1$, one obtains a {\em club of $m$ with head $m \cap r$}.
\end{itemize} 
A club has the following properties. 
\begin{itemize}
\item All clubs of $\PG(1, q^3)$ are projectively equivalent. 
\item A club of $\PG(1, q^3)$ contains $q(q+1)$ $q$--order sublines. 
\item Through two non--head points there passes exactly one $q$--order subline and it contains the head point. Two distinct $q$--order sublines of a club have at most a point in common distinct from the head point.  
\item Let $\cC$ be a club with head point $T$. The unique $q$--order subline determined by two non--head points and $T$ is contained in $\cC$.
\item A $q$--order subline of a club arises either from the sublines of $\pi_0$ through a point of $\pi_0$ or from a subline of $\pi_0$ according as the club is obtained by extension or by projection, respectively.
\end{itemize}
Similarly, it is possible to define geometrically a splash of $\PG(1, q^3)$ in two equivalent ways.
\begin{itemize}
\item[{\em i)}] {\em By extension.} By extending the sublines of $\pi_0$ to lines of $\PG(2, q^3)$ and intersecting these with the line $\ell_2$, one obtains a {\em splash of $\ell_2$}.  
\item[{\em ii)}] {\em By projection.} By projecting the points of $\pi_0$ from $R_2$ onto a line $m$ not containing $R_2$, one obtains a {\em splash of $m$}. 
\end{itemize} 
A splash has the following properties. 
\begin{itemize}
\item All splashes of $\PG(1, q^3)$ are projectively equivalent. 
\item A splash of $\PG(1, q^3)$ contains $2(q^2+q+1)$ $q$--order sublines divided into two equally sized families, say $\cF_1$ and $\cF_2$. 
\item A $q$--order subline of a family arises either from the extended sublines of $\pi_0$ through a point of $\pi_0$ or from a subline of $\pi_0$ according as the splash is obtained by extension or by projection, respectively. A $q$--order subline of the opposite family arises either from the extended sublines of $\pi_0$ of a dual subconic of $\pi_0$ or from a subconic of $\pi_0$ according as the splash is obtained by extension or by projection, respectively. 
\item Two distinct sublines of the same family meet in one point, whereas sublines of different families have in common $0, 1$ or $2$ points. Through two points there is exactly one $q$--order subline of each family.   
\item Let $s$ be a fixed $q$--order subline of $\cF_1$. Among the $q$--order sublines of $\cF_2$ there are $q+1$ meeting $s$ in one point, $q(q+1)/2$ meeting $s$ in two points, and $q(q-1)/2$ disjoint from $s$. 
\item The stabilizer of a splash in $\PGL(2, q^3)$ is a group of order $2(q^2+q+1)$ and it acts transitively on its $2(q^2+q+1)$ $q$--order sublines.
\end{itemize}

In the remaining part of this subsection we prove further properties regarding splashes of $\PG(1, q^3)$ that will be used in the paper.
 
Let $r_1$ be the canonical $q$--order subline of $\PG(1, q^3)$ and let $K = \PGL(2, q)$ be the stabilizer of $r_1$ in $\PGL(2, q^3)$. Let $x \in \GF(q^3) \setminus \GF(q)$ and let 
$$
\cS_x = \{(z - z^q, x z^q - x^q z) \;\; | \;\; z \in \GF(q^3) \setminus \{0\}\} = \{(t^q - t, x t - x^q t^q) \;\; | \;\; t \in \GF(q^3) \setminus \{0\}\}. 
$$

Since the projectivity of $\PGL(2, q^3)$ induced by 
$$
\begin{pmatrix}
x & 1 \\
x^q & 1 \\
\end{pmatrix}
$$
maps $\cS_x$ to the splash $\{(z, z^q) \;\; | \;\; z \in \GF(q^3) \setminus \{0\}\}$, we have that $\cS_x$ is a splash of $\PG(1, q^3)$. The stabilizer of $\cS_x$ in $\PGL(2, q^3)$ is generated by 
$$
\left\{ \begin{pmatrix}
\xi x - \xi^q x^q & \xi - \xi^q \\
-x^{q+1} (\xi - \xi^q) & \xi^q x - \xi x^q  \\
\end{pmatrix} \;\; \Big| \;\; \xi \in \GF(q^3) \setminus \{0\} \right\} \mbox{ and }
\begin{pmatrix}
-1 & 0 \\
x+x^q & 1 \\
\end{pmatrix}.
$$
If $z \in \GF(q)$ or $z = x + a$, where $a \in \GF(q)$, then $(z - z^q, x z^q - x^q z)$ is a point of $r_1$. Hence $r_1$ is a $q$--order subline of $\cS_x$. Let $\cF_1$ denote the family of $q$--order sublines of $\cS_x$ containing $r_1$ and let $\cF_2$ be the opposite family. Thus
$$
\cF_1 = \{r_{\xi} \;\; : \;\; \xi \in \GF(q^3) \setminus \{0\}\}, \qquad \cF_2 = \{r'_{\xi} \;\; : \;\; \xi \in \GF(q^3) \setminus \{0\}\}, 
$$ 
where
\begin{eqnarray*}
r_\xi &=& \left\{(\xi (x + a) - \xi^q (x^q + a), x \xi^q (x^q+a) - x^q \xi (x + a)) \;\; | \;\; a \in \GF(q)\right\} \cup \left\{(\xi - \xi^q, x \xi^q - x^q \xi)\right\},\\
r'_\xi &=& \left\{(\xi^q (x^q + a) - \xi (x + a), x \xi (x+a) - x^q \xi^q (x^q + a)) \;\; | \;\; a \in \GF(q)\right\} \cup \left\{(\xi^q - \xi, x \xi - x^q \xi^q)\right\}.
\end{eqnarray*}
\begin{lemma}\label{stab1}
Let $r'_\xi$ be a $q$--order subline of $\cF_2$ such that $|r'_\xi \cap r_1| = i$, $0 \le i \le 2$. Then there is a subgroup of $K$ if order $q+1-i$ fixing $r'_\xi$. 
\end{lemma}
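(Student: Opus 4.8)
The plan is to identify $\mathrm{Stab}_K(r'_\xi)$ with the subgroup of $\PGL(2,q^3)$ fixing both sublines $r_1$ and $r'_\xi$, and then to locate inside it an explicit cyclic (or unipotent) subgroup of order $q+1-i$. Since any two $q$--order sublines are projectively equivalent, $\mathrm{Stab}(r'_\xi)\cong\PGL(2,q)$, and hence the group we want is $K\cap\mathrm{Stab}(r'_\xi)$, the simultaneous stabilizer of $r_1$ and $r'_\xi$. Every element of this group permutes the $i$ common points $r_1\cap r'_\xi$, so it fixes a set of size $i$ on the line $r_1\cong\PG(1,q)$ on which $K$ acts naturally. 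This dictates the candidate subgroup by its fixed--point behaviour on $r_1$: for $i=2$ the split torus of order $q-1$ fixing the two common points, for $i=1$ the unipotent group of order $q$ fixing the unique common point, and for $i=0$ a nonsplit (Singer) torus of order $q+1$, which fixes no point of $\PG(1,q)$ at all. The numerology is reassuring: by the splash property recalled above (with $s=r_1$) there are exactly $q(q+1)/2$, $q+1$, and $q(q-1)/2$ sublines of $\cF_2$ meeting $r_1$ in $2,1,0$ points, matching precisely the numbers of split tori, unipotent subgroups, and nonsplit tori of $\PGL(2,q)$.

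It then remains to verify that the chosen subgroup really fixes $r'_\xi$. The case $i=2$ is a general fact requiring no special information about $\cF_2$: choosing coordinates so that the two common points are $(1,0)$ and $(0,1)$, every $q$--order subline through them has the scaling form $\{(cs,1):s\in\GF(q)\}\cup\{(1,0)\}$, which is visibly preserved by the diagonal torus $\{\mathrm{diag}(\eta,1):\eta\in\GF(q)^*\}$, a subgroup of $K$ of order $q-1$. For $i=1$ I would put the common point at infinity, so that a subline through it becomes an affine $\GF(q)$--line in the affine part $\GF(q^3)$ together with the point at infinity, with $r_1$ corresponding to $\GF(q)$ itself. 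The unipotent group of $K$ acts here by the translations $y\mapsto y+b$, $b\in\GF(q)$, and it fixes $r'_\xi$ precisely when $r'_\xi$ is \emph{parallel} to $r_1$, i.e. has direction $\GF(q)$. This is where the structure of $\cF_2$ enters: reading the explicit parametrization of $r'_\xi$ in the affine coordinate $X_2/X_1$ shows that its finite points form the coset $-(x+x^q)+\GF(q)$ (for $\xi=1$), a line parallel to $r_1$, and the analogous computation for general $\xi$ exhibits the required parallelism, so the translation group of order $q$ fixes $r'_\xi$.

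The genuinely delicate case is $i=0$, which I expect to be the main obstacle, since a subline disjoint from $r_1$ need not be stabilized by any torus, so no generic argument can succeed. My approach is to pass to the $\GF(q)$--linear model: a nonsplit torus $T_-\le\PGL(2,q)$ turns $V=\GF(q^3)^2$ into a vector space over $\GF(q)[T_-]\cong\GF(q^2)$, and its invariant $q$--order sublines are exactly the images of the $\GF(q^2)$--lines of $V$. The task is therefore to show that the $2$--dimensional $\GF(q)$--space spanning $r'_\xi$ is an $\GF(q^2)$--line for a suitable choice of $T_-$, and that the resulting subline is disjoint from $r_1$; I would extract this from the subconic origin of the sublines of $\cF_2$ (equivalently, from their being orbits of the cyclic group $\langle M_\xi\rangle$ stabilizing $\cS_x$), which forces $r'_\xi$ into the special position that a Singer cycle of $K$ can realize. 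Once the invariance is established in each case, the orders are $q-1$, $q$, $q+1$ respectively, that is $q+1-i$, completing the proof.
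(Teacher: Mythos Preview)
Your strategy differs from the paper's: where you reach for a structural reason why the right subgroup of $K$ must stabilize $r'_\xi$, the paper proceeds by brute force, writing down in each sub-case an explicit parametrization of $r'_\xi$ (normalizing $\xi$ according to which points of $r_1$ lie on it), an explicit matrix subgroup of $K$ of the required order, and verifying the action point by point.

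Your $i=2$ argument is clean and correct, and in fact works for \emph{any} $q$--order subline through two fixed points of $r_1$, with no use of $\cF_2$. Your $i=1$ argument is correct in substance: the parallelism you predict does hold. But note that you cannot reduce to the single case $\xi=1$ by conjugating inside $K$, since $K$ does not preserve $\cS_x$ (this is precisely Lemma~\ref{stab2}); the image of an $\cF_2$--subline under an element of $K$ need not be $r'_1$. So you must verify parallelism directly for $\xi=(x+b)^{-2}$ in the affine chart centred at $(1,b)$, and that computation, while routine, is of the same order as what the paper does.

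The real gap is $i=0$. You have identified the correct candidate (a nonsplit torus of $K$), and the numerical coincidence you notice is a pleasant sanity check, but nothing in your outline shows that some nonsplit torus actually fixes a given $r'_\xi$, nor tells you which one. The appeal to the ``subconic origin'' of $\cF_2$ or to an $\GF(q^2)$--linear structure on $\GF(q^3)^2$ is not yet an argument: concretely, for $\xi=\frac{1}{(x+u)(x+u^q)}$ with $u\in\GF(q^2)\setminus\GF(q)$, you would have to exhibit a two--dimensional $\GF(q)$--subspace representing $r'_\xi$ that is closed under the $\GF(q^2)$--action determined by a specific torus, and this is not visible from the generalities you invoke. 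The paper closes this case by direct computation: it parametrizes $r'_\xi$ explicitly, writes down the matrices of the nonsplit torus attached to the irreducible polynomial $X^2+s_1X+s_2$ with $s_1=-(u+u^q)$ and $s_2=u^{q+1}$, and checks the action elementwise. Unless you supply a concrete link between the subconic description and the required $\GF(q^2)$--linearity, your $i=0$ case remains a plan rather than a proof.
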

\begin{proof}
Let $r'_\xi \in \cF_2$ such that $|r'_\xi \cap r_1| = 1$. 
\begin{itemize}
    \item If $r'_{\xi} \cap r_1 = \{(0, 1)\}$, then $\xi \in \GF(q)$ and $r'_\xi = r'_1$, where
$$
r'_1 = \{R_a = (1, -a-x-x^q) \;\; | \;\; a \in \GF(q)\} \cup \{R = (0, 1)\}. 
$$
Consider the subgroup $\{\gamma_c \;\; : \;\; c \in \GF(q)\}$ of $K$ of order $q$ fixing $R$, where $\gamma_c$ is induced by the matrix
$$
\begin{pmatrix}
1 & 0 \\
c & 1 \\
\end{pmatrix},
$$
for some $c \in \GF(q)$. Then $\gamma_c(R_a) = R_{a-c}$ and $\gamma_c$ fixes $r'_1$. 
\item If $r'_{\xi} \cap r_1 = \{(1, b)\}$, for a fixed $b \in \GF(q)$, then $\xi = \frac{1}{(x+b)^2}$ and 
$$r'_\xi = r'_{(x+b)^{-2}}=
 \{R_a  \;\; | \;\; a \in \GF(q)\}\ 
\cup \ \{R = (x + x^q + 2b, b^2 - x^{q+1})\},
$$
where 
$$R_a = (x^{q+1} + a(x + x^q) + 2ab-b^2, (2b-a) x^{q+1} + b^2 (x+x^q) + ab^2).$$
Consider the subgroup $\{\gamma_c \;\; : \;\; c \in \GF(q)\}$ of $K$ of order $q$ fixing $R_b$, where $\gamma_c$ is induced by the matrix
$$
\begin{pmatrix}
1 - bc & c \\
-b^2c & 1+bc \\
\end{pmatrix},
$$ 
for some $c \in \GF(q)$. Let $\gamma_c \ne id$, i.e., $c \ne 0$. Then $\gamma_c(R) = R_{\frac{bc-1}{c}}$ and, for $a \ne b$, we have that $\gamma_{c}(R_{a})$ equals $R$ or $R_{\frac{a - bc(a-b)}{1- c(a-b)}}$, according as $c = (a-b)^{-1}$ or $c \ne (a-b)^{-1}$. Hence $\gamma_c$ fixes $r'_{(x+b)^{-2}}$.
\end{itemize}

Suppose that $r'_\xi$ is such that $|r'_\xi \cap r_1| = 2$. 
\begin{itemize}
    \item If $r'_{\xi} \cap r_1 = \{(0, 1), (1, b)\}$, for a fixed $b \in \GF(q)$, then $\xi = (x + b)^{-1}$ and $r'_\xi = r'_{(x+b)^{-1}}$, where
$$
r'_{(x+b)^{-1}} = \{R_a = (a - b, x^{q+1} +b (x+x^q) + ab) \;\; | \;\; a \in \GF(q)\} \cup \{R = (1, b)\}. 
$$
Let $\Gamma$ be the subgroup of $K$ of order $q-1$ fixing both $R$ and $R_b$ induced by the subgroup of $\GL(2, q)$ given by
$$
\left\{ 
\begin{pmatrix}
c & 0 \\
(c - d) b & d \\
\end{pmatrix} \;\; : \;\; c, d \in \GF(q), cd \ne 0 \right\}.
$$
A member of $\Gamma$, say $\gamma_c$, is induced by the following matrix
$$
\begin{pmatrix}
1 & 0 \\
(1- c)b & c \\
\end{pmatrix},
$$
for some $c \in \GF(q) \setminus \{0\}$. Then $\gamma_c(R_a) = R_{\frac{a-b+bc}{c}}$ and $\gamma_c$ fixes $r'_{(x+b)^{-1}}$. 
\item If $r'_{\xi} \cap r_1 = \{(1, b_1), (1, b_2)\}$, for two fixed elements $b_1, b_2 \in \GF(q)$, then $\xi = \frac{1}{(x+b_1)(x+b_2)}$. In this case the subline $r'_\xi = r'_{(x+b_1)^{-1}(x+b_2)^{-1}}$ turns out to be
$$
\{R_a  \;\; | \;\; a \in \GF(q)\}\quad  \cup \quad \{R = (x + x^q + b_1 + b_2, b_1 b_2 - x^{q+1})\}, 
$$
where 
$$R_a=(x^{q+1} + a(x + x^q) + a(b_1+b_2) - b_1b_2, (b_1 + b_2 - a) x^{q+1} + b_1b_2 (x+x^q + a)).$$
Let $\Gamma$ be the subgroup of $K$ of order $q-1$ fixing both $R_{b_1}$ and $R_{b_2}$ induced by the subgroup of $\GL(2, q)$ given by
$$
\left\{ 
\begin{pmatrix}
c & d \\
- d b_1b_2 & c + d(b_1 + b_2) \\
\end{pmatrix} \;\; : \;\; c, d \in \GF(q), (c + b_1d)(c + b_2d) \ne 0 \right\}.
$$
A non--trivial projectivity of $\Gamma$, say $\gamma_c$, is induced by the following matrix
$$
\begin{pmatrix}
c & 1 \\
-b_1b_2 & c + b_1 + b_2 \\
\end{pmatrix},
$$
for some $c \in \GF(q) \setminus \{- b_1, - b_2\}$. Then $\gamma_c(R) = R_{-c}$ and $\gamma_{c}(R_{a})$ equals $R$ or $R_{\frac{ac + b_1b_2}{b_1+ b_2 + c - a}}$, according as $c = a - b_1 - b_2$ or $c \ne a - b_1 - b_2$. Hence $\gamma_c$ fixes $r'_{(x+b_1)^{-1}(x + b_2)^{-1}}$. 
\end{itemize}

Assume that $r'_\xi$ is such that $|r'_\xi \cap r_1| = 0$. Let $s_1, s_2$ be elements of $\GF(q)$ such that the polynomial $f(X) = X^2 + s_1 X + s_2$ is irreducible over $\GF(q)$. This means that there exists $u \in \GF(q^2) \setminus \GF(q)$ such that $f(u) = f(u^q) = 0$ and hence $u^{q+1} = s_2$, $u + u^q = - s_1$. In this case $\xi = \frac{1}{(x + u)(x + u^q)}$ and 
$$r'_\xi = r'_{(x+u)^{-1}(x+u^q)^{-1}}=
r'_{\frac{1}{(x+u)(x+u^q)}} = \{R_a  \;\; | \;\; a \in \GF(q)\} \
 \cup \ \{R = (x + x^q - s_1, s_2 - x^{q+1})\},$$
 where 
$$R_a = (x^{q+1} + a(x + x^q) - a s_1 - s_2, -(s_1 + a) x^{q+1} + s_2 (x+x^q + a)).$$
Let $\Gamma$ be the subgroup of $K$ of order $q+1$ induced by the subgroup of $\GL(2, q)$ given by
$$
\left\{ 
\begin{pmatrix}
c & d \\
- s_2 d & c - s_1 d \\
\end{pmatrix} \;\; : \;\; c, d \in \GF(q), (c, d) \ne (0, 0) \right\}.
$$
A non--trivial projectivity of $\Gamma$, say $\gamma_c$, is induced by the following matrix
$$
\begin{pmatrix}
c & 1 \\
-s_2 & c - s_1 \\
\end{pmatrix},
$$
for some $c \in \GF(q)$. Then $\gamma_c(R) = R_{-c}$ and $\gamma_{c}(R_{a})$ equals $R$ or $R_{\frac{ac + s_2}{c - a - s_1}}$, according as $c = a + s_1$ or $c \ne a + s_1$. Hence $\gamma_c$ fixes $r'_{(x+u)^{-1}(x + u^q)^{-1}}$. 
\end{proof}

\begin{lemma}\label{stab2}
No non--trivial projectivity of $K$ fixes $\cS_x$. 
\end{lemma}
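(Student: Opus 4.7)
My plan is to reduce the lemma to a single orbit-stabilizer computation, using only the transitivity property of $\mathrm{Stab}(\cS_x)$ recalled in the bullet list. Suppose $\gamma\in K$ stabilizes $\cS_x$ setwise. Since by definition $K$ is the stabilizer of $r_1$ in $\PGL(2,q^3)$, the element $\gamma$ belongs to the intersection
\[
\mathrm{Stab}_{\PGL(2,q^3)}(r_1)\ \cap\ \mathrm{Stab}_{\PGL(2,q^3)}(\cS_x).
\]

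The key input is the property quoted in the last bullet point above: $\mathrm{Stab}_{\PGL(2,q^3)}(\cS_x)$ has order $2(q^2+q+1)$ and acts transitively on the full set of $2(q^2+q+1)$ $q$-order sublines of $\cS_x$. Now $r_1$ is itself one of these sublines — this was verified in the paragraph introducing $\cS_x$, by taking $z\in\GF(q)$ or $z=x+a$ with $a\in\GF(q)$ — so the orbit of $r_1$ under $\mathrm{Stab}(\cS_x)$ has length $2(q^2+q+1)$, and the orbit-stabilizer theorem forces
\[
\bigl|\mathrm{Stab}(\cS_x)\cap\mathrm{Stab}(r_1)\bigr|\ =\ \frac{2(q^2+q+1)}{2(q^2+q+1)}\ =\ 1.
\]
Hence $\gamma$ is the identity, which is exactly the statement of the lemma.

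The only thing one needs to be careful about is having both factors of the intersection in hand: the stabilizer of $r_1$ is $K$ by fiat, and the order and transitivity of $\mathrm{Stab}(\cS_x)$ are quoted from the literature. After that, no computation is needed and Lemma~\ref{stab1} plays no role in the proof. A harder, self-contained alternative would be to avoid the quoted transitivity and instead argue directly that a nontrivial $\gamma\in K$ stabilizing $\cS_x$ would have to permute $\cF_2$ compatibly with the $K$-orbit decomposition into pieces of sizes $q+1$, $q(q+1)/2$, $q(q-1)/2$ (classified by $|r'_\xi\cap r_1|$), and then use the cyclic stabilizer orders $q$, $q-1$, $q+1$ from Lemma~\ref{stab1} to derive a contradiction; but that would be considerably more painful than the transitivity shortcut and I see no reason to pursue it.
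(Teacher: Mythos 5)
Your argument is correct, but it is not the route the paper takes. The paper's own proof is a direct computation: for $\gamma\in K$ induced by $\begin{pmatrix} a & b\\ c & d\end{pmatrix}\in\GL(2,q)$ it computes the image of a generic point of $\cS_x$ and finds $\gamma(\cS_x)=\cS_{\frac{c-dx}{bx-a}}$; equality with $\cS_x$ would force a quadratic relation $bx^2+(d-a)x-c=0$ with coefficients in $\GF(q)$, impossible for $x\in\GF(q^3)\setminus\GF(q)$ unless $b=c=0$, $a=d$, i.e.\ $\gamma=\mathrm{id}$. Your proof instead takes the quoted background facts at face value --- $|\mathrm{Stab}_{\PGL(2,q^3)}(\cS_x)|=2(q^2+q+1)$ and transitivity of this group on the $2(q^2+q+1)$ sublines of $\cS_x$ --- notes that $r_1$ is one of those sublines (verified just before the lemma), and applies orbit--stabilizer to get $K\cap\mathrm{Stab}_{\PGL(2,q^3)}(\cS_x)=\{\mathrm{id}\}$, which is exactly the statement. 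This is logically sound and not circular within the paper's structure, since those bullet-point properties are imported from the splash literature and are also used elsewhere (e.g.\ in the proof of Proposition~\ref{characterization}); what you lose relative to the paper is self-containedness --- the paper's computation needs nothing beyond the definition of $\cS_x$ and incidentally produces the explicit formula for $\gamma(\cS_x)$, whereas your two-line argument stands or falls with the exact order and transitivity of the splash stabilizer as quoted. You are also right that Lemma~\ref{stab1} plays no role in either proof of this particular lemma.
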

\begin{proof}
Let $\gamma$ be the projectivity of $K$ associated with the matrix 
$$
\begin{pmatrix}
a & b \\
c & d \\
\end{pmatrix} \in \GL(2, q). 
$$
Then 
$$\gamma(t^q - t, x t - x^q t^q) = - \left((bx^q-a)t^q - (bx-a)t, \frac{c - dx}{bx-a}(bx-a)t - \frac{c-dx^q}{bx^q-a}(bx^q-a)t^q\right).$$ Hence $\gamma(\cS_x) = \cS_{\frac{c-dx}{bx-a}}$. Hence $\cS_{x} \ne \gamma(\cS_{x})$ unless $b = c = 0$ and $a = d$, i.e., $\gamma$ is the identity.
\end{proof}

\begin{prop}\label{characterization}
Let $\cS$ be a splash of $\PG(1, q^3)$ and let $s, s'$ be two $q$--order sublines of $\cS$ such that $|s \cap s'| = 1$. Then $s'$ belongs to the opposite family of $s$ if and only if $s'$ is stabilized by a subgroup of order $q$ of $Stab_{\PGL(2, q^3)}(s)$.
\end{prop}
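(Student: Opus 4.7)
The plan is to reduce, by exploiting the transitivity properties of $Stab_{\PGL(2,q^3)}(\cS)$ and of $K := Stab_{\PGL(2,q^3)}(r_1) = \PGL(2,q)$, to the canonical case $\cS = \cS_x$, $s = r_1 \in \cF_1$, $P := s \cap s' = \infty$, and then to inspect the sublines of $\cS_x$ through $\infty$ one by one using the parametrization introduced above. Since all splashes are projectively equivalent and $Stab_{\PGL(2,q^3)}(\cS)$ acts transitively on the $2(q^2+q+1)$ $q$--order sublines of $\cS$, the pair $(\cS, s)$ can be sent to $(\cS_x, r_1)$; and because $K$ acts transitively on $r_1$, a further conjugation in $K$---which preserves $r_1$ setwise, replaces $\cS_x$ by another splash $\cS_{x'}$ with $x' \in \GF(q^3) \setminus \GF(q)$ (as can be checked from the formula in Lemma~\ref{stab2}), but preserves the distinction between the family of $r_1$ and its opposite---brings $P$ to $\infty$. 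All hypotheses and the conclusion are invariant under this reduction.

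In this canonical form the ``if'' direction is just the $i=1$ case of Lemma~\ref{stab1}: for any $s' \in \cF_2$ with $|s' \cap r_1|=1$ that lemma exhibits a subgroup of $K \subseteq Stab_{\PGL(2,q^3)}(s)$ of order $q$ stabilizing $s'$. For the converse, let $H \le K$ be a subgroup of order $q$ stabilizing $s'$. Since $H$ preserves both $r_1$ and $s'$, it fixes the unique intersection $P = \infty$, so $H$ is contained in the Borel $Stab_K(\infty)$ of order $q(q-1)$; as $\gcd(p, q-1) = 1$, this Borel has a unique Sylow $p$--subgroup, namely the translation subgroup $U_\infty = \{z \mapsto z+c \mid c \in \GF(q)\}$, so $H = U_\infty$. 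A $q$--order subline through $\infty$ has the form $\{\infty\} \cup (\beta + \alpha \GF(q))$ for some $\alpha \in \GF(q^3)^*$, $\beta \in \GF(q^3)$, and it is stabilized by $U_\infty$ if and only if $\alpha \GF(q) = \GF(q)$, i.e.\ $\alpha \in \GF(q)^*$.

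The remaining computational step is to show that no subline of $\cF_1$ through $\infty$ distinct from $r_1$ has such an $\alpha$. The sublines of $\cF_1$ through $\infty$ other than $r_1$ are exactly the $r_{1/(x+a)}$ for $a \in \GF(q)$: the condition that some point $R_{\xi,a}$ lies at infinity reads $\xi(x+a) = \xi^q(x^q+a)$, and $\xi = 1/(x+a)$ is the canonical solution up to $\GF(q)^*$--scaling. An elementary expansion of the defining formula of $r_\xi$ for $\xi = 1/(x+a)$, using $(x+a)^q = x^q+a$, yields
$$
r_{1/(x+a)} \setminus \{\infty\} \;=\; -(x + x^q + a) \;+\; (x+a)^{q+1}\,\GF(q),
$$
so the relevant direction is $\alpha = (x+a)^{q+1}$. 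If $\alpha \in \GF(q)$ then $(x+a)^{q^2-1} = 1$, forcing $x+a \in \GF(q^2) \cap \GF(q^3) = \GF(q)$, which contradicts $x \in \GF(q^3) \setminus \GF(q)$. Hence $s' \notin \cF_1$, and $s' \in \cF_2$. The only technically delicate step is the explicit simplification producing $(x+a)^{q+1}$ as the direction; everything else is a formal consequence of the preceding reductions and of the uniqueness of the Sylow $p$--subgroup in a Borel of $\PGL(2,q)$.
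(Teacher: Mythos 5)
Your proof is correct, but the converse direction follows a genuinely different route from the paper's. The paper argues by contradiction at the level of splashes: if a non-trivial $\gamma \in K=Stab_{\PGL(2,q^3)}(r_1)$ fixed a subline $s'$ of $\cS_x$ lying in the same family as $r_1$, then by Lemma \ref{stab2} the splash $\gamma(\cS_x)$ would be distinct from $\cS_x$ yet would share with it the two same-family sublines $s,s'$, contradicting the uniqueness theorem \cite[Theorem 5.2]{BJ2}; this in fact shows the stronger statement that no non-trivial element of $Stab_{\PGL(2,q^3)}(s)$ fixes a same-family subline. You instead normalize $s\cap s'$ to $\infty$, observe that an order-$q$ subgroup of $K$ stabilizing both $r_1$ and $s'$ must fix $\infty$ and hence (unique Sylow $p$-subgroup of the Borel) be the translation group $U_\infty$, and then check directly from the parametrization that every subline of $\cF_1$ through $\infty$ other than $r_1$ is $r_{1/(x+a)}$ with affine part $-(x+x^q+a)+(x+a)^{q+1}\GF(q)$, whose direction $(x+a)^{q+1}$ cannot lie in $\GF(q)$ since $\GF(q^2)\cap\GF(q^3)=\GF(q)$ would force $x\in\GF(q)$. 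Your computation is accurate (indeed the point of $r_{1/(x+a)}$ with parameter $b\ne a$ is $-(x+x^q+a)+(x+a)^{q+1}/(a-b)$), and the argument is self-contained: it avoids both Lemma \ref{stab2} and the external result of Barwick--Jackson, at the price of proving only what the proposition asks (non-existence of an order-$q$ stabilizing subgroup rather than triviality of the full stabilizer) and of relying on the easily verified but unstated facts that $r_\xi$ depends on $\xi$ only up to $\GF(q)^*$-scaling and that the $K$-conjugation used in the normalization preserves the family of $r_1$.
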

\begin{proof}
Since $\PGL(2, q^3)$ is transitive on its splashes and the stabilizer of $\cS$ in $\PGL(2, q^3)$ is transitive on its $q$--order sublines, we may assume w.l.o.g. that $\cS = \cS_x$ and that $s = r_1$. If $s'$ belongs to the opposite family of $s$, then, from Lemma \ref{stab1}, we have that there exists a subgroup of order $q$ of $Stab_{\PGL(2, q^3)}(s)$ fixing $s'$. Viceversa, if $s'$ belongs to the same family as $s$, we claim that no non--trivial element of $Stab_{\PGL(2, q^3)}(s)$ fixes $s'$. Assume on the contrary that there exists such a projectivity $\gamma$. Then from Lemma \ref{stab2} we have that $\cS \ne \gamma(\cS)$, where $s, s'$ are two $q$--order sublines of both $\cS$ and $\gamma(\cS)$, contradicting \cite[Theorem 5.2]{BJ2}.  
\end{proof}

\begin{theorem}\label{splash}
There are $q^3-q$ splashes of $\PG(1, q^3)$ through a $q$--order subline.
\end{theorem}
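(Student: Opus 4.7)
The plan is to use an orbit--stabilizer argument inside the subgroup $K = \PGL(2, q)$ of $\PGL(2, q^3)$, which is the setwise stabilizer of the canonical $q$--order subline $r_1$. The two transitivity statements listed among the properties of splashes combine into a single fact: $\PGL(2, q^3)$ acts transitively on pairs $(\cS, s)$, where $\cS$ is a splash of $\PG(1, q^3)$ and $s$ is a $q$--order subline of $\cS$. Indeed, given two such pairs one first sends one splash onto the other using transitivity of $\PGL(2, q^3)$ on splashes, and then uses the stabilizer of the target splash (of order $2(q^2+q+1)$ and transitive on its $2(q^2+q+1)$ $q$--order sublines) to align the sublines.

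As a formal consequence of this pair--transitivity, the stabilizer $K$ of $r_1$ acts transitively on the set of splashes containing $r_1$. The splash $\cS_x$ belongs to this set, as observed in the paragraph preceding Lemma \ref{stab1} (where it is shown that $r_1 \in \cF_1$), and by Lemma \ref{stab2} its stabilizer inside $K$ is trivial. Orbit--stabilizer then gives
$$|\{\text{splashes of } \PG(1,q^3) \text{ through } r_1\}| \;=\; |K| \;=\; |\PGL(2,q)| \;=\; q(q-1)(q+1) \;=\; q^3 - q.$$

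Since $\PGL(2, q^3)$ is transitive on $q$--order sublines of $\PG(1, q^3)$, the same count holds for every $q$--order subline, not merely for $r_1$, which proves the theorem. There is no real obstacle here: the pair--transitivity is an immediate consequence of the two quoted properties, and Lemma \ref{stab2} supplies exactly the trivial--stabilizer input needed to close the orbit--stabilizer computation.
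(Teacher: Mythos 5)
Your argument is correct, but it is a genuinely different route from the one in the paper. You run an orbit--stabilizer computation: pair-transitivity of $\PGL(2,q^3)$ on pairs (splash, contained $q$--order subline) — which indeed follows formally from the two quoted properties, since a splash contains exactly $2(q^2+q+1)$ $q$--order sublines and its stabilizer permutes them transitively — shows that $K=\PGL(2,q)$ acts transitively on the splashes through $r_1$, and Lemma \ref{stab2} gives the trivial point stabilizer, so the orbit has size $|K|=q^3-q$; transitivity of $\PGL(2,q^3)$ on $q$--order sublines finishes the job. The paper instead double counts the pairs $(\cZ,s')$ with $\cZ$ a splash through $s$ and $s'$ a further subline of $\cZ$ in the same family as $s$: Proposition \ref{characterization} identifies the admissible $s'$ as the sublines of $\cB\setminus\cB'$, the uniqueness result \cite[Theorem 5.2]{BJ2} says each such pair $(s,s')$ determines one splash, and dividing by the $q^2+q$ same-family companions of $s$ gives $z=q^3-q$. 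Your proof is shorter and bypasses both Proposition \ref{characterization} and the external uniqueness theorem of Barwick--Jackson, at the cost of leaning directly on the quoted structural properties of the splash stabilizer (order $2(q^2+q+1)$, transitive on the sublines) together with Lemma \ref{stab2}; the paper's counting argument is heavier but dovetails with the same-family/opposite-family bookkeeping that it reuses immediately afterwards in Theorem \ref{splash_main}. Both yield the same count, so there is no gap to repair.
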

\begin{proof}
Let $s$ be a $q$--order subline. In $\PG(1, q^3)$, there are $(q-1)(q+1)^2(q^2+1)$ $q$--order sublines intersecting $s$ in exactly one point. Let $\cB$ denote the set of such sublines. It can be easily checked that a subgroup of order $q$ of $Stab_{\PGL(2, q^3)}(s)$ acts on points of $\PG(1, q^3)$ by fixing a point $P$ of $s$ and by forming $q^2$ orbits of size $q$. Moreover, each of these orbits of size $q$ together with $P$ gives rise to a $q$--order subline of $\PG(1, q^3)$. Therefore there is a subset $\cB'$ of $\cB$ consisting of $(q^2-1)(q+1)$ $q$--order sublines that are stabilized by a subgroup of order $q$ of $Stab_{\PGL(2, q^3)}(s)$. 

Let us count in two ways the couples $(\cZ, s')$, where $\cZ$ is a splash of $\PG(1, q^3)$ containing $s$ and $s'$ is a further $q$--order subline of $\cZ$ belonging to the same family of $s$. From Proposition~\ref{characterization}, $s'$ belongs necessarily to $\cB \setminus \cB'$. Hence $s'$ can be chosen in $q^2(q+1)(q^2-1)$ ways, and from \cite[Theorem 5.2]{BJ2}, a splash containing $s$ and a further $q$--order subline $s'$ belonging to the same family of $s$ is uniquely determined. Hence on the one hand the number of these couples equals $q^2(q+1)(q^2-1)$. On the other hand, if $z$ denotes the number of splashes of $\PG(1, q^3)$ containing $s$, we have that the number of these couples equals $z (q^2+q)$. Therefore $z = q^3-q$, as required. 
\end{proof}

We are ready to prove the main result of this subsection.

\begin{theorem}\label{splash_main}
Let $\cS_1$, $\cS_2$ be two distinct splashes of $\PG(1, q^3)$ having a $q$--order subline $s$ in common. Then $\cS_1$, $\cS_2$ share a $q$--order subline belonging to the opposite family of $s$. 
\end{theorem}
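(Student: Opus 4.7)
The plan is a double-counting argument combined with a uniqueness bound: the bound says that any two distinct splashes through $s$ share at most one subline in the opposite family of $s$, and the average count will turn out to be exactly one, so every pair must share exactly one such subline.

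For the upper bound, suppose two distinct splashes $\cS_1, \cS_2$ both containing $s$ were to share two distinct sublines $s' \ne s''$, each in the opposite family of $s$ inside the respective splash. Then $\{s', s''\}$ is a pair of sublines of the same family of $\cS_1$, and simultaneously of the same family of $\cS_2$. The uniqueness statement \cite[Theorem 5.2]{BJ2} invoked in the proof of Theorem~\ref{splash}, asserting that a splash is determined by two of its sublines lying in one family, then forces $\cS_1 = \cS_2$, a contradiction. Hence the sharing is at most one per pair.

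For the double count, let $N$ denote the number of ordered triples $(\cS_1, \cS_2, s')$ with $\cS_1 \ne \cS_2$ splashes through $s$ and $s'$ a $q$-order subline belonging to the opposite family of $s$ in both. Summing over ordered pairs of splashes, the upper bound gives $N \le (q^3-q)(q^3-q-1)$. Summing over sublines, $N = \sum_{s'} m(s')(m(s')-1)$, where $m(s')$ counts the splashes through $s$ having $s'$ in their opposite family. Lemma~\ref{stab1} together with orbit--stabilizer, applied to the stabilizer $K \cong \PGL(2,q)$ of $s$ (which acts regularly on the $q^3-q$ splashes through $s$ by Lemma~\ref{stab2} and Theorem~\ref{splash}), yields $m(s') = q+1-i$ with $i := |s' \cap s|$. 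A case split on $i$ identifies the eligible sublines: for $i=1$ one recovers the set $\cB'$ of size $(q^2-1)(q+1)$ from the proof of Theorem~\ref{splash}; for $i=2$ one gets all $q^2(q+1)^2/2$ sublines meeting $s$ in two points, since any such subline is automatically stabilized by the split torus fixing that pair; and for $i=0$ one gets the $q^2(q-1)^2/2$ nontrivial orbits of the $q(q-1)/2$ non-split Cartan subgroups of $K$ acting on $\PG(1,q^3)\setminus s$.

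Adding the three contributions,
\[
N \;=\; (q^2-1)(q+1)\,q(q-1) \;+\; \tfrac{q^2(q+1)^2}{2}(q-1)(q-2) \;+\; \tfrac{q^2(q-1)^2}{2}\,q(q+1),
\]
and this collapses to $q(q-1)(q+1)(q^3-q-1) = (q^3-q)(q^3-q-1)$, exactly the number of ordered pairs of distinct splashes through $s$. Since each summand of $N$ written as an ordered-pair sum is at most one, equality is forced term-by-term, so every such pair shares exactly one $q$-order subline in the opposite family of $s$, proving the theorem. The main obstacle I anticipate is the classification step for $i \in \{0,2\}$: Lemma~\ref{stab1} and Proposition~\ref{characterization} as stated give only the direction \emph{opposite family $\Rightarrow$ prescribed stabilizer} and treat $i=1$ in detail, so the converses needed here (every subline with the right stabilizer lies in exactly $q+1-i$ opposite families, in the stated numbers) must be obtained by re-running the regular $K$-action machinery from Theorem~\ref{splash} for each value of $i$, which is the combinatorial bottleneck.
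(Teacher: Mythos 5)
Your route is genuinely different from the paper's (a global double count over all $q^3-q$ splashes through $s$, against the paper's local argument that fixes one splash and exhausts the other $q^3-q-1$), and parts of it are sound: the at-most-one bound per ordered pair via \cite[Theorem 5.2]{BJ2} is correct, and your sum does collapse arithmetically to $(q^3-q)(q^3-q-1)$. But the core of the argument — the exact identification of the ``eligible'' sublines and the exact multiplicities $m(s')=q+1-i$ — is asserted, not proved, and it does not follow from the results you cite. Lemma \ref{stab1}, Lemma \ref{stab2} and Proposition \ref{characterization} only give a \emph{necessary} stabilizer condition for a subline that is already known to lie opposite to $s$ in some splash, and (via the orbit of $\cS_x$ under $\Gamma_{s'}$) only the \emph{lower} bound $m(s')\ge q+1-i$ for such sublines. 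They do not show that every member of your three candidate families is eligible (e.g.\ that every subline meeting $s$ in two points, or every nontrivial orbit of a non-split Cartan subgroup, actually occurs as an opposite-family subline of some splash through $s$), nor do they give the upper bound $m(s')\le q+1-i$ that your exact evaluation of $N=\sum_{s'}m(s')(m(s')-1)$ requires. A transitivity shortcut is unavailable: $|K|=q^3-q$ is smaller than the number of candidates for $i=2$ and $i=0$, so $K$ is not transitive on those sets and constancy of $m$ cannot be read off the regular action on splashes; for $i=0$ you would in addition have to prove that the Cartan orbits are sublines at all and that distinct Cartans contribute distinct orbits. You flag this yourself as the ``combinatorial bottleneck'', but as it stands that bottleneck \emph{is} the proof: without it the identity $N=(q^3-q)(q^3-q-1)$ is unestablished and the forcing argument does not start.

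Two remarks on repairing it. First, you do not need exact values: if you could show that every candidate subline is eligible, then the orbit argument of Lemmas \ref{stab1}--\ref{stab2} gives $m(s')\ge q+1-i$, hence $N\ge(q^3-q)(q^3-q-1)$, and equality with your upper bound forces the conclusion — but even this eligibility statement for $i\in\{0,2\}$ is nontrivial and unproven. Second, the paper shows the global census is unnecessary: fixing one splash $\cS_x\ni s$ and letting $s'$ range only over the opposite-family sublines of $\cS_x$ itself, the orbits $\cS_x^{\Gamma_{s'}}$ meet pairwise only in $\cS_x$ (via \cite[Theorem 23]{LV}), and they already yield $(q-1)(q+1)+(q-2)q(q+1)/2+q\cdot q(q-1)/2=q^3-q-1$ distinct splashes sharing with $\cS_x$ an opposite-family subline; by Theorem \ref{splash} these are all splashes through $s$ other than $\cS_x$, which is exactly the statement. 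If you want to keep your double-counting framework, the cleanest fix is to import that local orbit-disjointness argument rather than attempt the global classification of eligible sublines.
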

\begin{proof}
We may assume w.l.o.g. that $\cS_1 = \cS_x$, where $x$ is a fixed element of $\GF(q^3) \setminus \GF(q)$ and that $s = r_1$. Let $s'$ be a $q$--order subline of $\cS_x$ belonging to the opposite family of $s$. From Lemma~\ref{stab1}, there is a subgroup $\Gamma_{s'}$ of $K$ of order $q + 1 - i$ fixing $s'$, where $|s \cap s'| = i$. Note that from Lemma~\ref{stab2}, a non--trivial element $\gamma$ of $\Gamma_{s'}$ maps $\cS_x$ to a splash of $\PG(1, q^3)$ distinct from $\cS_x$. Hence $\gamma(\cS_x)$ will share with $\cS_x$ both $s$ and $s'$. In particular $|\cS_x^{\Gamma_{s'}}| = q+1-i$. Moreover if $s', s''$ are two distinct $q$--order sublines of $\cS_x$ belonging to the opposite family of $s$, then $\cS_x^{\Gamma_{s'}} \cap \cS_x^{\Gamma_{s''}} = \{\cS_x\}$, otherwise if $\cS \in \cS_x^{\Gamma_{s'}} \cap \cS_x^{\Gamma_{s''}}$ and $\cS \ne \cS_x$, then $\cS$ would contain $s, s'$ and $s''$ and hence would share with $\cS_x$ at least $3q-2$ points, contradicting \cite[Theorem 23]{LV}. Since $s'$ can be chosen in $q+1$, $q(q+1)/2$ or $q(q-1)/2$ ways according as $|s \cap s'|$ equals $1$, $2$ or $0$, we have that there are $(q-1) \cdot (q+1) + (q-2) \cdot q(q+1)/2 + q \cdot q(q-1)/2 = q^3-q-1$ splashes of $\PG(1, q^3)$ distinct from $\cS_x$ such that each of them shares with $\cS_x$ the $q$--order subline $s$ and a further $q$--order subline belonging to the opposite family of $s$. The result now follows from Theorem~\ref{splash}.  
\end{proof}

\subsection{On the number of points covered by a special lineset of $\PG(2, q^3)$}

In $\PG(2, q^3)$, with the same notation used in Lemma \ref{demp}, let $\pi_0$ be a $q$--order subplane and let $\ell$ be a line of $\cL_3'$. The line $\ell$ contains $q^2+q+1$ points of $\cO_2'$ and $q^3-q^2-q$ points of $\cO_3'$. Moreover $\ell \cap \cO_2'$ is a splash $\cS$ of $\ell$ obtained by extending the $q^2+q+1$ sublines of $\pi_0$. Let $\cF_1$ be the family of $q$--order sublines of $\cS$ arising from the extended sublines of $\pi_0$ of certain dual subconics of $\pi_0$ and let $\cF_2$ be the family of $q$--order sublines of $\cS$ arising from the extended sublines of $\pi_0$ through a point of $\pi_0$. Let $s$ be a fixed element of $\cF_1$ and let $\cD$ be the set of extended sublines of the dual subconic of $\pi_0$ which gives rise to $s$. Then $\cD$ consists of $q+1$ extended sublines of $\pi_0$ such that through a point of $\PG(2, q^3)$ there pass at most $2$ lines of $\cD$ and through a point of $s$ there is exactly one line of $\cD$. Moreover two distinct lines of $\cD$ meet in a point of $\pi_0$. There are other $q^2(q+1)$ lines of $\PG(2, q^3)$ having at least a point in common with both $s$ and $\pi_0$. Let $\cE$ denote the set of such lines. In particular a line of $\cE$ has exactly one point in common with both $s$ and $\pi_0$, i.e., $\cE \subset \cL_2'$. Through a point of $s$ there are $q^2$ lines of $\cE$ and through a point of $\pi_0$ there are $q+1$,  $q$ or $q-1$ lines of $\cE$, according as there pass  $0$, $1$ or $2$ lines of $\cD$, respectively. A line of $\cD$ contains $q+1$ points of $\pi_0$ and $q^3-q$ points of $\cO_2'$, whereas a line of $\cE$ contains one point of $\pi_0$, $q^2$ points of $\cO_2'$ and $q^3-q^2$ points of $\cO_3'$. The main aim of this subsection is to bound the number of points of $\PG(2, q^3)$ lying on at least a line of $\cE$. The following result \cite[Lemma 5.6]{BJ1} will be useful.     

\begin{lemma}[\cite{BJ1}]\label{projection}
Let $r_1, r_2$ be lines of $\PG(2, q^3)$ and $b$ be a $q$--order subline of $r_2$ disjoint from $r_1$. Then each $q$--order subline of $r_1$, disjoint from $r_2$, is the projection of $b$ from exactly one point not on $r_1\cup r_2$.
\end{lemma}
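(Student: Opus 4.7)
My approach is to reduce the claim to a counting argument for projectivities between the two lines. Set $Q:=r_1\cap r_2$; since $b\subset r_2$ is disjoint from $r_1$, one has $Q\notin b$, and similarly $Q\notin b'$ for every $q$-order subline $b'\subset r_1$ disjoint from $r_2$.

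First I would translate the geometric side. For any point $P\notin r_1\cup r_2$, projection from $P$ induces a projectivity $\sigma_P\colon r_2\to r_1$ which fixes $Q$ (since $Q\in r_1$). Conversely, by the classical theorem that a projectivity between two distinct lines is a perspectivity if and only if it fixes their common point, every projectivity $\sigma\colon r_2\to r_1$ with $\sigma(Q)=Q$ equals $\sigma_P$ for a unique $P\notin r_1\cup r_2$. Since $\PGL(2,q^3)$ is transitive on $q$-order sublines of a projective line, every projectivity between lines sends $q$-sublines to $q$-sublines; combined with $\sigma_P(Q)=Q\notin b'$, this means $\sigma_P(b)$ is automatically a $q$-order subline of $r_1$ disjoint from $r_2$. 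The lemma is therefore equivalent to the following: for any $b,b'$ as in the statement, there is exactly one projectivity $\sigma\colon r_2\to r_1$ with $\sigma(b)=b'$ and $\sigma(Q)=Q$.

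To count such $\sigma$, I would fix a reference projectivity $\sigma_0\colon r_2\to r_1$ with $\sigma_0(b)=b'$; any other candidate has the form $\sigma=\sigma_0\circ\tau$ for a unique $\tau$ in the stabilizer of $b$ in $\PGL(r_2)\cong\PGL(2,q^3)$, a copy of $\PGL(2,q)$ of order $q(q^2-1)$. The requirement $\sigma(Q)=Q$ becomes $\tau(Q)=\sigma_0^{-1}(Q)$; both source and target lie in $r_2\setminus b$, a set of cardinality $(q^3+1)-(q+1)=q^3-q$. Hence the lemma reduces to showing that $\PGL(2,q)$ acts sharply transitively on $\PG(1,q^3)\setminus\PG(1,q)$.

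The main remaining step, and the only nontrivial ingredient, is this sharp transitivity. Since $|\PGL(2,q)|=q^3-q$ equals the cardinality of $\PG(1,q^3)\setminus\PG(1,q)$, it suffices to verify freeness. A non-identity element of $\PGL(2,q)$ has at most two fixed points on $\PG(1,\overline{\GF(q)})$, lying either in $\PG(1,q)$ or forming a Frobenius-conjugate pair in $\PG(1,q^2)\setminus\PG(1,q)$. Because $\GF(q^2)\cap\GF(q^3)=\GF(q)$, no such element can fix any point of $\PG(1,q^3)\setminus\PG(1,q)$. This freeness yields a unique $\tau$, hence a unique $\sigma$, and therefore a unique center $P\notin r_1\cup r_2$ whose projection carries $b$ onto $b'$.
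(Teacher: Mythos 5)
Your proof is correct. Note that the paper itself offers no argument for this lemma: it is quoted from Barwick--Jackson \cite[Lemma 5.6]{BJ1}, so there is no in-paper proof to compare with; your argument is a legitimate self-contained substitute. Your route --- identifying the points off $r_1 \cup r_2$ with the projectivities $r_2 \to r_1$ fixing $Q = r_1 \cap r_2$ via the classical perspectivity criterion (with uniqueness of the centre), and then reducing the count of such projectivities sending $b$ to $b'$ to the fact that the stabilizer of $b$, a copy of $\PGL(2,q)$, acts sharply transitively on the $q^3-q$ points of $r_2 \setminus b$ --- is sound: freeness follows, as you say, because a non-identity element of $\PGL(2,q)$ has its fixed points in $\PG(1,q)$ or in a conjugate pair over $\GF(q^2)$, and $\GF(q^2) \cap \GF(q^3) = \GF(q)$ rules out fixed points in $\PG(1,q^3) \setminus \PG(1,q)$; freeness together with $|\PGL(2,q)| = q^3-q$ then gives regularity, hence exactly one admissible $\tau$, $\sigma$ and centre $P$. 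Two cosmetic remarks: the reason a projectivity $r_2 \to r_1$ carries $q$-order sublines to $q$-order sublines is not really the transitivity of $\PGL(2,q^3)$ on sublines of a single line, but simply that sublines are the images of $\PG(1,q)$ under projectivities and projectivities compose; and it is worth stating explicitly that $\sigma_0^{-1}(Q) \notin b$, since otherwise $Q \in \sigma_0(b) = b'$ would contradict $b' \cap r_2 = \emptyset$. Neither point affects the validity of the argument.
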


\begin{lemma}\label{atmost2}
Through a point on a line of $\cD$ and not lying on $\pi_0 \cup s$, there passes at most one line of $\cE$.
\end{lemma}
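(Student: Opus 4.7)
The plan is to argue by contradiction. Suppose two distinct lines $e_1, e_2 \in \cE$ pass through $P$, with $e_i \cap \pi_0 = \{A_i\}$ and $e_i \cap s = \{B_i\}$ for $i = 1, 2$. Write $m_0 := d \cap \pi_0$ and $B_0 := d \cap \ell \in s$; since $e_i \ne d$ forces $A_i \notin m_0$, the points $B_0, B_1, B_2$ are three distinct elements of $s$.

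First I would project $\pi_0$ from $P$ onto $\ell$. Because $P \in d$ and $d$ is the unique extended subline of $\pi_0$ through $P$ (recall $P \in \cO_2'$), this projection is a club $\cC$ of $\ell$ with head $B_0$. The collinearities $P, A_i, B_i$ yield $B_1, B_2 \in \cC \setminus \{B_0\}$, so $\cC \cap s \supseteq \{B_0, B_1, B_2\}$. Using the club property that any two non-head points of a club, together with the head, lie on a unique $q$--order subline of the club, combined with the uniqueness of the $q$--order subline of $\PG(1, q^3)$ through three prescribed points, one obtains that $s$ is a $q$--order subline of $\cC$, hence $s \subset \cC$.

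Because $\cC$ is obtained by projection (not extension), every $q$--order subline of $\cC$ through the head arises from a line of $\pi_0$ (distinct from $m_0$) projected from $P$. Let $\sigma \subset \pi_0$ be the line whose projection from $P$ equals $s$, set $A_0 := \sigma \cap m_0$, and, writing $\cD \setminus \{d\} = \{d_1, \ldots, d_q\}$ with $B_j := d_j \cap \ell$, put $A_j' := PB_j \cap \pi_0$. Then $\sigma = \{A_0, A_1', \ldots, A_q'\}$. Moreover $A_j' \notin m_j := d_j \cap \pi_0$: otherwise $PB_j$ would pass through the two distinct points $A_j' \in m_j \subset d_j$ and $B_j \in d_j$, forcing $PB_j = d_j$, whence $P \in d \cap d_j = m_0 \cap m_j \subset \pi_0$, contrary to $P \notin \pi_0$.

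To derive the final contradiction, note that $\sigma \cap m_j$ is a point of $\sigma$ distinct from $A_j'$, which in the generic case defines a permutation $\mu$ of $\{0, 1, \ldots, q\}$ with $\mu(0) = 0$ and $\mu(j) \ne j$ for $j \ge 1$, via $\sigma \cap m_j = A_{\mu(j)}'$. Comparing the projectivity $\sigma \to s$ induced by projection from $P$ (sending $A_0 \to B_0$ and $A_j' \to B_j$) with the natural projectivity $\sigma \to s$ coming from the dual subconic (sending $\sigma \cap m_i \to B_i$), their composition is a projectivity of $s \cong \PG(1, q)$ fixing $B_0$ and realising $\mu$, hence a non-trivial translation in affine coordinates. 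The main obstacle, and the technical heart of the proof, is to show that no $P \in d \setminus (\pi_0 \cup \{B_0\})$ can realise this configuration; I would do this through Menelaus-type cross-ratio identities along the triangles formed by three extended tangents $d, d_j, d_{\mu(j)}$ (genuine triangles because no three lines of $\cD$ are concurrent), showing that the resulting constraints on the position of $P$ on $d$ admit no solution outside $\pi_0 \cup \{B_0\}$. The non-generic cases, in which $\sigma$ passes through some intersection $m_i \cap m_j$, are handled by the same Menelaus constraints.
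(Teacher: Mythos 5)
Your opening moves are sound: projecting $\pi_0$ from $P$ onto $\ell$ is indeed a club $\cC$ with head $B_0$, the three distinct points $B_0,B_1,B_2$ of $s$ lie in $\cC$, and the club property together with the uniqueness of the $q$--order subline through three points correctly forces $s$ to be a subline of $\cC$, hence the projection from $P$ of some line $\sigma\ne m_0$ of $\pi_0$ (the case $\sigma\in\cD$, which you never exclude, can also be disposed of quickly). But at this point nothing is yet contradictory: the points of $\cI$ constructed later in the paper also project a line of $\pi_0$ onto $s$, so the whole difficulty is to show that the centre of such a projection cannot lie on a line of $\cD$ --- and that is precisely the step you do not carry out. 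You explicitly defer ``the technical heart'' to unspecified Menelaus-type cross-ratio identities, so no contradiction is ever derived; the proposal is an outline, not a proof.

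Moreover, the scaffolding you erect for that missing step would not survive being made precise. The assignment $\sigma\cap m_i\mapsto B_i$ is not ``the natural projectivity coming from the dual subconic'': the lines of a dual conic cut two fixed non-member lines in a $(2,2)$-correspondence, not in projectively related ranges, and the assignment is not even single-valued at a point of $\sigma$ lying on two lines of $\cD$. Worse, your ``generic case'' is essentially empty: only $q+1$ points of $\pi_0$ lie on exactly one line of $\cD$, and for odd $q$ they form a conic, so a line $\sigma\notin\cD$ contains at most two of them; hence $j\mapsto\sigma\cap m_j$ is never injective, the permutation $\mu$, the ``non-trivial translation'' and the triangles $d,d_j,d_{\mu(j)}$ are not defined, and the situation you dismiss as non-generic is in fact the only one --- and it is not handled. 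For comparison, the paper finishes synthetically and cheaply, with no cross-ratio computations: it works with the clubs obtained by extension on the two lines of $\cE$ through $P$, uses that the unique $q$--order subline of such a club through the head, $P$ and the point of $s$ is cut out by the pencil of extended sublines of $\pi_0$ through one point of $\pi_0$, locates that point on the line of $\cD$ through $P$ and on the line of $\cD$ through the relevant point of $s$, and concludes that three lines of $\cD$ would pass through a common point of $\pi_0$, which is impossible.
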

\begin{proof}
Assume by contradiction that there is a point $P$ on a line $r \in \cD$, with $P \notin \pi_0 \cup s$, such that there are two lines of $\cE$, say $t_1$, $t_2$, passing through $P$. Let $T_i = t_i \cap \pi_0$, $i = 1,2$. The line $t$ obtained by joining $T_1$ and $T_2$ is an extended subline of $\pi_0$. Let $R = t \cap r$. Let $\cC_i$ be the set $t_i \cap (\cO_2' \cup \pi_0)$. Since $|t_i \cap \pi_0| = 1$, it follows that $\cC_i$ is a club of $t_i$ with head point $T_i$, $i = 1,2$. In particular $\cC_i$ is obtained by extending the sublines of $\pi_0$. Let $Q_i = t_i \cap s$ and let $m_i$ be the unique line of $\cD$ through $Q_i$, $i = 1,2$. Note that the club $\cC_i$ contains a unique $q$--order subline passing through $T_i, P, Q_i$, $i = 1,2$. Since every $q$--order subline of $\cC_i$ arises from the extended sublines through a point of $\pi_0$, we have that the lines $t, m_i, r$ form a pencil and $R = t \cap r \cap m_i$, $i = 1,2$. It follows that $m_1, m_2$ and $r$ are three lines of $\cD$ through the point $R \in \pi_0$, contradicting the fact that through a point of $\PG(2, q^3)$ there pass at most two lines of $\cD$. 
\end{proof}

\begin{lemma}\label{atmost3}
Through a point of $\cO_2'$ not lying on a line of $\cD$, there pass at most three lines of $\cE$.
\end{lemma}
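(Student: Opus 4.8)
The plan is to fix $P\in\cO_2'$ off every line of $\cD$, let $g$ be the unique extended subline of $\pi_0$ through $P$, and convert the count of lines of $\cE$ through $P$ into an intersection count on $\ell$. First dispose of the case $P\in\ell$: then $P\in\cS=\ell\cap\cO_2'$, and since $P$ lies on no line of $\cD$ we get $P\notin s$; any line $t\neq\ell$ through $P$ meets $\ell$ only at $P$, hence misses $s$, while $\ell\in\cL_3'$ is not in $\cE$, so there are no lines of $\cE$ through $P$. Assume therefore $P\notin\ell$ and project $\pi_0$ from $P$ onto $\ell$. By the projection description of a club, the image is a club $\cC$ of $\ell$ with head $H=g\cap\ell$ (the $q+1$ points of $g\cap\pi_0$ are exactly the fibre over $H$). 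A line $t$ through $P$ lies in $\cE$ iff it meets both $\pi_0$ and $s$; as $t\cap\ell$ is a single point and $s\subset\ell$, the assignment $t\mapsto t\cap\ell$ is a bijection from the lines of $\cE$ through $P$ onto $(s\cap\cC)\setminus\{H\}$ (membership in $\cC$ records that $t$ meets $\pi_0$, and $t\cap\ell\neq H$ records $t\neq g$, i.e.\ $t\in\cL_2'$). Thus it suffices to prove $|s\cap\cC|\le 3$.

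\textbf{Where the number three comes from.}
The bound should reflect the degree $[\GF(q^3):\GF(q)]=3$. I would coordinatise $\ell\cong\PG(1,q^3)$ so that $H$ is the point at infinity, so that $\cC$ is a trace-type club and membership $(y:1)\in\cC$ is governed by the classical determinant criterion for a linearised polynomial: $(y:1)\in\cC$ iff the $\GF(q)$-linear map $x\mapsto a_0(y)x+a_1(y)x^{q}+a_2(y)x^{q^{2}}$ (with each $a_i$ affine in $y$) has nontrivial kernel, i.e.\ iff a $3\times 3$ Moore-type determinant vanishes. Every monomial of that determinant is a product of three factors carrying the Frobenius twists $q^{0},q^{1},q^{2}$ each exactly once, so the condition has algebraic degree $3$. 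Restricting to $s$, whose points may be written $y=y(w)$ with $w$ ranging over a projective line over $\GF(q)$, we have $w^{q}=w$; substituting and clearing the common denominator (a product of three twisted linear forms, of degree $3$) collapses the membership condition to a single polynomial $\tilde F(w)$ of degree at most $3$. Hence $|s\cap\cC|\le 3$ unless $\tilde F\equiv 0$, that is, unless $s\subseteq\cC$.

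\textbf{Excluding the containment $s\subseteq\cC$ (the main obstacle).}
This last case is exactly where the hypothesis that $P$ lies on no line of $\cD$ is needed, and I expect it to be the hard part. Each point of $s$ has the form $\ell\cap m$ with $m\in\cD$, and it equals the head $H=g\cap\ell$ only if the unique extended subline $g$ through it lies in $\cD$, i.e.\ only if $P$ lies on a line of $\cD$; under our hypothesis this never occurs, so $H\notin s$. Now suppose $s\subseteq\cC$. Projecting the $q+1$ points of $s$ back from $P$ yields $q+1$ distinct points $T_m=P(\ell\cap m)\cap\pi_0$ of $\pi_0$. If these were collinear on a line $\mu$ of $\pi_0$, then (since any two lines of $\pi_0$ meet) the point $\lambda\cap\mu$ with $\lambda=g\cap\pi_0$ would project to $H$, so $s$ would be contained in the projection of $\mu$ from $P$ — a subline of $\cC$ of size $q+1$ that passes through $H$ — forcing $H\in s$, a contradiction. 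The remaining burden is therefore to show that the containment $s\subseteq\cC$ indeed forces the preimages $T_m$ to be collinear (equivalently, that $s$ must be one of the $q(q+1)$ natural sublines of the club). I would obtain this from the club property that the unique $q$-order subline through two non-head points of $\cC$ also passes through the head, which pins any subline contained in $\cC$ to the natural family; alternatively, one can verify by direct computation that $\tilde F\equiv 0$ is equivalent to $g\in\cD$. Either way $s\subseteq\cC$ is impossible, so $|s\cap\cC|\le 3$, giving at most three lines of $\cE$ through $P$; the complementary one-line bound for points lying on a line of $\cD$ is precisely Lemma~\ref{atmost2}.
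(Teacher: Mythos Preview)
Your approach coincides with the paper's: project $\pi_0$ from $P$ onto $\ell$ to obtain a club $\cC$ with head $H=g\cap\ell$, observe $H\notin s$ (since the extended subline $g$ through $P$ is not in $\cD$), and deduce at most three lines of $\cE$ through $P$ from $|s\cap\cC|\le 3$. The only difference is that the paper dispatches the last step in two lines---the listed club property that every $q$-subline of a club contains the head already gives $s\not\subseteq\cC$, and then \cite[Theorem~8]{LV} yields $|s\cap\cC|\le 3$---whereas you re-derive both facts via a Moore-determinant/degree-$3$ restriction argument and a longer collinearity detour; your route is sound but more laborious than needed here.
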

\begin{proof}
Let $P$ be a point of $\cO_2'$, let $r$ be the extended subline of $\pi_0$ containing $P$ and let $R = r \cap \ell$. Since $P$ does not lie on a line of $\cD$, we have that $r \notin \cD$ and hence $R \notin s$. By projecting $\pi_0$ from $P$ onto $\ell$ we get a club $\cC$ of $\ell$ with head point $R$. Since $R \notin s$, the $q$--order subline $s$ is not contained in $\cC$. From \cite[Theorem 8]{LV}, we have that $|s \cap \cC| \le 3$. This means that there are at most three lines of $\cE$ passing through the point $P$. 
\end{proof}

\begin{prop}\label{rango2}
Let $A, B$ be two distinct points of $s$. There are at most $(q-1)(q^2-q+1)$ points of $\cO_2'$, not lying on a line of $\cD$, and contained in two lines of $\cE$ passing through $A$ and $B$.  
\end{prop}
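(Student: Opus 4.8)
The plan is to bound the set $\mathcal{P}$ of points $P\in\cO_2'$ that avoid every line of $\cD$ and satisfy $AP,BP\in\cE$; the claim is $|\mathcal{P}|\le(q-1)(q^2-q+1)$. First I would record the basic correspondence. Since $A,B\in s\subset\ell$ and $\ell\notin\cE$ (as $\ell$ is disjoint from $\pi_0$), for $P\in\mathcal{P}$ the lines $AP,BP$ are distinct members of $\cE$, each meeting $\pi_0$ in a single point $T_A,T_B$ with $T_A\neq T_B$, and $P=AT_A\cap BT_B$. Writing $\cE_A,\cE_B$ for the $q^2$ lines of $\cE$ through $A$, resp. $B$, the map $P\mapsto(AP,BP)$ is a bijection from $\mathcal{P}$ onto those pairs $(t_A,t_B)\in\cE_A\times\cE_B$ whose intersection lies in $\cO_2'\setminus\bigcup\cD$. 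So $|\mathcal{P}|$ is controlled by sorting the $q^2\cdot q^2=q^4$ pairs by the location of $t_A\cap t_B$.

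Two of the four classes come for free. A pair meets in $\pi_0$ exactly when $T_A=T_B=:T$ (since $t_A\in\cL_2'$ meets $\pi_0$ only at $T_A$, any $P=t_A\cap t_B\in\pi_0$ forces $P=T_A=T_B$), and conversely $T$ ranges over $\pi_0\setminus(r_A\cup r_B)$, where $r_A,r_B\in\cD$ are the unique lines of $\cD$ through $A,B$; as $r_A\neq r_B$ meet in one point of $\pi_0$, this class has $q^2+q+1-(2q+1)=q^2-q$ pairs. A pair meeting $\cO_2'$ on a line of $\cD$ is impossible: such a $P$ lies in $\cO_2'\setminus(\pi_0\cup s)$ (it is off $\ell$, hence off $s$), so by Lemma \ref{atmost2} at most one line of $\cE$ passes through it, contradicting $AP,BP\in\cE$. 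Hence, with $N:=|\mathcal{P}|$ and $N_3$ the number of pairs meeting in $\cO_3'$, one has $N=q^4-(q^2-q)-N_3$, and the Proposition is equivalent to the lower bound $N_3\ge q^4-q^3+q^2-q+1$.

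For the core estimate I would slice $\mathcal{P}$ by the unique extended subline $r_P$ of $\pi_0$ through $P$. Since $P\notin\bigcup\cD$ we have $r_P\notin\cD$, so $R:=r_P\cap\ell$ is a point of the splash $\cS=\ell\cap\cO_2'$ off $s$; as extension sets up a bijection between the $q^2+q+1$ sublines of $\pi_0$ and the points of $\cS$, carrying $\cD$ to $s$, each $R\in\cS\setminus s$ determines one extended subline $r_R$, and the fibre $F_R\subseteq r_R$. On $r_R\subset\pi_0\cup\cO_2'$, ``$P\in\cO_2'$'' just means $P\notin\pi_0$, and projecting $\pi_0$ onto $r_R$ from $A$ (resp. $B$)—legitimate since $A,B\in\cO_2'\setminus r_R$—produces clubs $\cC_A^R,\cC_B^R$ of $r_R$ with heads $r_A\cap r_R,\ r_B\cap r_R\in\pi_0$, whose non-head points are $\{P:AP\in\cE\}$, resp. $\{P:BP\in\cE\}$. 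Thus $F_R$ is the set of $\cO_2'$-points common to $\cC_A^R$ and $\cC_B^R$. Both clubs contain the entire subline $\sigma_R=r_R\cap\pi_0$ (its points lie in $\pi_0$, hence are seen from both $A$ and $B$), so the task is to bound the \emph{additional} common points, which all lie in $\cO_2'$; club–intersection theory should cap these at $q-1$ per fibre. Summing over $R$ and showing that only $q^2-q+1$ of the $q^2$ points of $\cS\setminus s$ give a non-empty fibre—the $q-1$ exceptions being $s^*\setminus\{A,B\}$, where $s^*\in\cF_2$ is the opposite-family subline through $A,B$ (so $s^*\cap s=\{A,B\}$)—would yield $N\le(q-1)(q^2-q+1)$.

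The main obstacle is precisely this quantitative bookkeeping: bounding $\cC_A^R\cap\cC_B^R$ beyond the forced common subline $\sigma_R$ by $q-1$ extra $\cO_2'$-points, and identifying exactly which splash points $R$ produce a non-trivial intersection. Both rest on the fine internal geometry of the splash $\cS$—the distribution of the $\cF_2$-sublines against $s$, the club–subline bound $|s\cap\cC|\le 3$ underlying Lemma \ref{atmost3}, and the unique-centre projection principle of Lemma \ref{projection}. Lemmas \ref{atmost2} and \ref{atmost3} carry out the structural work (eliminating the $\cD$-contribution and capping the visibility multiplicities), but turning these into the exact constant $(q-1)(q^2-q+1)$ is where the real effort lies.
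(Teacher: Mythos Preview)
Your plan is exactly the paper's: fibre $\mathcal P$ by the extended subline $r_P$ through $P$, and on each fibre $r_R$ interpret the conditions $AP,BP\in\cE$ as membership in the clubs $\cC_A^R,\cC_B^R$ obtained by projecting $\pi_0$ from $A$ and $B$ onto $r_R$. The initial bookkeeping via $N_3$ is correct but unnecessary; the paper goes straight to the fibre count.

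The two gaps you flag are precisely what the paper fills in, and a single observation does most of the work. Set $T=r_A\cap r_B\in\pi_0$ and let $m$ be the extended subline through $T_A=\ell_A\cap\pi_0$ and $T_B=\ell_B\cap\pi_0$; projecting $m\cap\pi_0$ from $P$ onto $\ell$ yields $s'$, and the extended sublines of $\pi_0$ through $T$ also meet $\ell$ in $s'$. The claim is $T\notin r_P$: if $T\in r_P$ and $T\notin m$, then $s'$ is the projection of $m\cap\pi_0$ from both $P$ and $T$, contradicting Lemma~\ref{projection}; if $T\in m$ one forces $P\in m$ and $\ell_A=\ell_B$. This single fact delivers both of your missing pieces. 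First, since $r_P$ avoids $T$, its intersection with $\ell$ lies in $\cS\setminus(s\cup s')$, so at most $|\cS\setminus(s\cup s')|=q^2-q+1$ fibres are non-empty, exactly as you guessed. Second, the club heads $T_1=r_A\cap r_P$ and $T_2=r_B\cap r_P$ are distinct; hence $\cC_A^R\cap\cC_B^R$ contains both $\sigma_R=r_P\cap\pi_0$ and the unique $q$-order subline through $T_1,T_2,P$ (it passes through both heads, so lies in both clubs), giving $|\cC_A^R\cap\cC_B^R|\ge 2q$. A further common point would generate a third common subline through $T_1,T_2$ and push the intersection to at least $3q-2$, contradicting \cite[Theorem~23]{LV}. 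Thus each non-empty fibre contributes exactly $2q-(q+1)=q-1$ points of $\cO_2'$, and $|\mathcal P|\le(q-1)(q^2-q+1)$.
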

\begin{proof}
Let $A$ and $B$ be two points of $s$, such that there are two lines of $\cE$, say $\ell_A$ and $\ell_B$, where $A \in \ell_A$, $B \in \ell_B$, $\ell_A \cap \ell_B = P \in \cO_2'$ and $P$ does not lie on a line of $\cD$. Note that there is a unique $q$--order subline $s'$ of $\cS$ distinct from $s$  and containing $A$ and $B$. In particular $s'$ belongs to the opposite family of $s$. Let $r_P$, $r_A$ and $r_B$ be the extended sublines of $\pi_0$ containing $P, A$ and $B$, respectively. Then $r_A, r_B \in \cD$ and $r_P \notin \cD$. Let $T = r_A \cap r_B \in \pi_0$, $T_A = \ell_A \cap \pi_0$, $T_B = \ell_B \cap \pi_0$ and let $m$ be the extended subline of $\pi_0$ passing through $T_A$ and $T_B$. Thus by projecting $m \cap \pi_0$ from $P$ onto $\ell$ the subline $s'$ is obtained. Furthermore an extended subline of $\pi_0$ through $T$ meets $\cS$ in a point of $s'$. 

We claim that $T \notin r_P$. Assume by contradiction that $T \in r_P$. If $T \notin m$, then by projecting the $q$--order subline $m \cap \pi_0$ from $T$ onto $\ell$ we get $s'$. On the other hand $s'$ is also obtained by projecting $m \cap \pi_0$ from $P$ onto $\ell$, where $P \ne T$, contradicting Lemma~\ref{projection}. Hence $T \in m$ and $m \cap \ell = R \in s'$. Since $P$ projects $m \cap \pi_0$ onto $s'$, we infer that the line joining $R$ with $P$ meets $\pi_0$ in a point of $m$. It follows that $P \in m = \langle T_A, T_B \rangle$ and $\ell_A = \langle T_A, P \rangle = \langle T_B, P \rangle = \ell_B$, a contradiction. 

We deduce that $T \notin r_P$ and $r_P \cap \ell \in \cS \setminus (s \cup s')$. By projecting $\pi_0$ from $A$ onto $r_P$ we get a club $\cC_A$ of $r_P$ with head point $T_1 = r_A \cap r_P$. Similarly, by projecting $\pi_0$ from $B$ onto $r_P$ we get a club $\cC_B$ of $r_P$ with head point $T_2 = r_B \cap r_P$. Since $T \notin r_P$, it follows that $T_1 \ne T_2$. Also $r_P \cap \pi_0$ is a $q$--order subline of both $\cC_A$, $\cC_B$ and $P \in \cC_A \cap \cC_B$. Hence $|\cC_A \cap \cC_B| \ge q+2$. We want to show that $r_P$ contains $q-1$ points of $\cO_2'$ contained in two lines of $\cE$ passing through $A$ and $B$. To this end it is enough to prove that $|\cC_A \cap \cC_B| = 2q$. Let $\bar{r}$ be the $q$--order subline of $r_P$ determined by $T_1, T_2, P$. Then $\bar{r}$ is a $q$--order subline of both $\cC_A$ and $\cC_B$. Since $\bar{r} \cap (r_P \cap \pi_0) = \{T_1, T_2\}$, we have that $|\cC_A \cap \cC_B| \ge 2q$. On the other hand if $Z$ were a point of $(\cC_A \cap \cC_B) \setminus (\bar{r} \cup (r_P \cap \pi_0))$, then the unique $q$--order subline of $r_P$ determined by $Z, T_1, T_2$ would lie in $\cC_A \cap \cC_B$ and $|\cC_A \cap \cC_B| \ge 3q - 2$, contradicting \cite[Theorem 23]{LV}. Therefore $|\cC_A \cap \cC_B| = 2q$.        

We have seen that if there exists a point $P \in \cO_2'$ not lying on a line of $\cD$ and contained in two lines of $\cE$ passing through $A$ and $B$, then $r_P \cap \ell \in \cS \setminus (s \cup s')$ and $r_P$ contains $q-1$ points with such a property. Since $|\cS \setminus (s \cup s')| = q^2-q+1$, the result follows.
\end{proof}

\begin{prop}\label{q+1}
Let $q >2$. There is a set $\cI$ consisting of $q^2$ points of $\cO_3'$ such that through a point of $\cI$ there are $q+1$ lines of $\cE$.
\end{prop}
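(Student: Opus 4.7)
The plan is to exhibit $\cI$ as the set of $q^2$ points obtained from Lemma~\ref{projection}, one for each of the $q^2$ lines of $\pi_0$ lying outside $\cD$.

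More precisely, for each line $t$ of $\pi_0$ with $t\notin\cD$, the point $R=\widetilde{t}\cap\ell$ lies in $\cS\setminus s$, so the hypotheses of Lemma~\ref{projection} are satisfied with $r_1=\ell$, $r_2=\widetilde{t}$, $b=t$, and target $q$--order subline $s$. This yields a unique point $P_t\in\PG(2,q^3)\setminus(\widetilde{t}\cup\ell)$ whose perspectivity projects $t$ bijectively onto $s$. I would set $\cI=\{P_t: t\text{ a line of }\pi_0,\ t\notin\cD\}$ and verify three things: every $P_t$ belongs to $\cO_3'$, through each $P_t$ pass exactly $q+1$ lines of $\cE$, and the map $t\mapsto P_t$ is injective (so $|\cI|=q^2$).

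The counting of lines of $\cE$ through $P_t$ is straightforward from the construction: for each $A\in s$, letting $T_A\in t$ be the unique preimage of $A$ under the projection from $P_t$, one has $\langle P_t,A\rangle=\langle P_t,T_A\rangle$, which meets $\pi_0$ at $T_A$; once $P_t\in\cO_3'$, this line is not an extended subline, so it lies in $\cL_2'$ and hence in $\cE$. The $q+1$ distinct choices of $A$ produce $q+1$ distinct lines. The injectivity of $t\mapsto P_t$ follows because any $P\in\cO_3'$ with $s\subseteq\cS_P$ determines uniquely the $(q+1)$-subset $X_P\subset\pi_0$ that projects bijectively onto $s$ under the perspectivity from $P$, and in our construction $X_{P_t}=t$.

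The main obstacle is showing that $P_t\in\cO_3'$. The easier case $P_t\in\pi_0$ is ruled out as follows: the projection of $t$ from $P_t$ would equal $\{\widetilde{r}\cap\ell : r\text{ a line of }\pi_0\text{ through }P_t\}$, and coincidence with $s$ would force the pencil of $q+1$ lines of $\pi_0$ through $P_t$ to equal $\cD$, contradicting the description of $\cD$ as a dual subconic. The delicate case is $P_t\in\widetilde{r^*}\cap\cO_2'$ for some line $r^*$ of $\pi_0$: the projection of $\pi_0$ from $P_t$ is a club $\cK$ of $\ell$ with head $R'=\widetilde{r^*}\cap\ell$, and since $s$ must be a $q$--order subline of $\cK$ it contains the head, so $R'\in s$ and $r^*\in\cD$; the preimage of $s$ then splits as $r^*\cup(t\setminus\{T_0\})$ with $T_0=t\cap r^*$, and the $q$--order sublines of the projection-club being precisely the projections of lines of $\pi_0$ (see Section~\ref{sub}) identifies $t$ with a specific line. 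I expect to exclude this configuration by combining the internal application of Lemma~\ref{projection} inside $\widetilde{r^*}$ (with suitable auxiliary sublines) with the assumption $q>2$; this is where I expect the main difficulty of the argument to lie.
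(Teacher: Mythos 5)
Your construction is the same as the paper's: for each of the $q^2$ extended sublines of $\pi_0$ outside $\cD$, Lemma~\ref{projection} gives the unique point projecting it onto $s$, and the whole proposition reduces to showing this point lies in $\cO_3'$. Your treatment of the case $P_t\in\pi_0$, the count of the $q+1$ lines of $\cE$ through $P_t$, and the injectivity of $t\mapsto P_t$ are all fine. But the proof is not complete: the case $P_t\in\cO_2'$ — which you yourself identify as the main difficulty — is never actually excluded. You only announce a plan (``an internal application of Lemma~\ref{projection} inside $\widetilde{r^*}$ with suitable auxiliary sublines''), with no argument; as submitted this is a genuine gap, and it is precisely the point where the hypothesis $q>2$ must enter. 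The paper closes this case with the two lemmas proved just before: if $P_t\in\cO_2'$ lies on a line of $\cD$, the configuration contradicts Lemma~\ref{atmost2}, and if it lies on no line of $\cD$, it contradicts Lemma~\ref{atmost3}, both contradictions requiring $q>2$.

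In fact, your own partial analysis of the delicate case already contains everything needed for a quick finish, and you did not notice it. You correctly argue that if $P_t\in\cO_2'$, then the projection of $\pi_0$ from $P_t$ is a club whose head must lie on $s$, so the unique extended subline $r^*$ through $P_t$ belongs to $\cD$. Now observe that $r^*$ is the only line of $\cD$ through $P_t$ (two lines of $\cD$ meet in a point of $\pi_0$, and $P_t\notin\pi_0$), so among the $q+1$ lines joining $P_t$ to the points of $t$, all except possibly $\langle P_t, t\cap r^*\rangle=r^*$ meet both $\pi_0$ and $s$ without being extended sublines; hence at least $q\ge 3$ of them lie in $\cE$. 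Since $P_t$ is a point of the $\cD$--line $r^*$ not in $\pi_0\cup s$, this contradicts Lemma~\ref{atmost2}, which allows at most one line of $\cE$ through such a point. With this (or with the paper's two-case appeal to Lemmas~\ref{atmost2} and~\ref{atmost3}) inserted, your argument becomes a correct proof along the same lines as the paper's.
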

\begin{proof}
Let $r$ be an extended subline of $\pi_0$ with $r \notin \cD$. Then $\bar{r} = r \cap \pi_0$ is a $q$--order subline. From Lemma \ref{projection}, there is a unique point $R$ of $\PG(2, q^3)$ with $R \notin r \cup \ell$ such that the $q$--order subline $s$ is obtained by projecting $\bar{r}$ from $R$ onto $\ell$. If $R$ were in $\pi_0$, then every extended subline of $\pi_0$ passing through $R$ would lie in $\cD$, contradicting the fact that through a point of $\pi_0$ there pass at most two lines of $\cD$. If $R$ were in $\cO_2'$, then either $R$ would lie on a line of $\cD$, contradicting Lemma \ref{atmost2} or $R$ would not lie on a line of $\cD$, contradicting Lemma \ref{atmost3}, whenever $q > 2$. 
\end{proof}

\begin{cor}\label{non-collinear}
If through a point of $\cO_3' \setminus \cI$ there pass three lines of $\cE$, then the points in common between these three lines and $\pi_0$ are not collinear.
\end{cor}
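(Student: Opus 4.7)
The plan is to argue by contradiction. Suppose there is a point $P \in \cO_3' \setminus \cI$ lying on three distinct lines $\ell_1, \ell_2, \ell_3 \in \cE$ whose intersections $T_i := \ell_i \cap \pi_0$ are collinear, say on an extended subline $m$ of $\pi_0$. Set $Q_i := \ell_i \cap s$. Two preliminary observations: first, $Q_1, Q_2, Q_3$ are pairwise distinct, since two coinciding $Q_i$ would force the corresponding $\ell_i$'s to coincide (they would share both $P$ and $Q_i$); second, $P \notin m$ because $P \in \cO_3'$ lies on no extended subline of $\pi_0$, and $P \notin \ell$ because otherwise each $\ell_i \ne \ell$ would meet $\ell$ only at $P$, forcing $Q_i = P$, contradicting $P \in \cO_3'$ and $Q_i \in s \subset \cO_2'$.

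The central step is to consider the projectivity $\phi \colon m \to \ell$ induced by projection from $P$. By construction, $\phi(T_i) = Q_i$ for $i = 1, 2, 3$, and since $\phi$ maps $q$--order sublines to $q$--order sublines, the image $\phi(m \cap \pi_0)$ is a $q$--order subline of $\ell$ containing the three distinct points $Q_1, Q_2, Q_3$. As any three distinct points of $\PG(1, q^3)$ determine at most one $q$--order subline, and $Q_1, Q_2, Q_3 \in s$, it follows that $\phi(m \cap \pi_0) = s$.

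Now I would split according to whether $m \in \cD$. If $m \notin \cD$, then $m \cap \ell \notin s$, so $s$ is disjoint from $m$, and Lemma~\ref{projection} applies: $P$ is the unique point off $m \cup \ell$ from which $s$ arises as the projection of $m \cap \pi_0$ onto $\ell$. The proof of Proposition~\ref{q+1} identifies precisely this unique point as a member of $\cI$, contradicting $P \notin \cI$. If instead $m \in \cD$, let $Q := m \cap \ell \in s$. Then $\phi$ fixes $Q$, and since $Q \in s = \phi(m \cap \pi_0)$, the injectivity of $\phi$ forces $Q \in m \cap \pi_0 \subset \pi_0$, which contradicts $\ell \cap \pi_0 = \emptyset$.

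The main obstacle I foresee is clean verification that $\phi(m \cap \pi_0)$ must equal $s$; this relies on the distinctness of the $Q_i$ together with the three-point uniqueness of $q$--order sublines in $\PG(1, q^3)$. The remaining technicality is that Lemma~\ref{projection} does not apply verbatim when $m \in \cD$ (because $s$ and $m$ are not disjoint), but the fixed-point argument of the last paragraph takes care of this case and completes the contradiction.
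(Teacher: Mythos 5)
Your proof is correct and follows essentially the same route as the paper: project the subline of $\pi_0$ through the collinear points from $P$ onto $\ell$, identify the image with $s$ via the three distinct points $Q_i$, exclude the case that the extended subline lies in $\cD$, and then invoke the uniqueness in Lemma~\ref{projection} together with the construction of $\cI$ in Proposition~\ref{q+1} to force $P \in \cI$, a contradiction. Your handling of the $\cD$-case by the fixed point $Q = m \cap \ell$ and injectivity of the projection is just a rephrasing of the paper's argument that the line through $P$ and $r \cap \ell$ would coincide with $r$.
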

\begin{proof}
Assume that there is a point $P \in \cO_3'$ such that through $P$ there are three lines of $\cE$, say $t_1, t_2, t_3$, and that the three points $T_i = t_i \cap \pi_0$, $1 \le i \le 3$, are collinear. Let $Q_i = t_i \cap \ell$ and note that $Q_i \in s$, $1 \le i \le 3$. Then the $q$--order subline of $\pi_0$ containing $T_1, T_2, T_3$ is projected from $P$ onto the unique $q$--order subline of $\ell$ containing $Q_1, Q_2, Q_3$, namely $s$. The extended subline $r$ containing $T_1, T_2, T_3$ cannot belong to $\cal D$. Otherwise let $R = r \cap \ell$ and the line joining $P$ and $R$ meets $\pi_0$ in a point of $r \cap \pi_0$, i.e., $P \in r$, a contradiction. It follows that $P \in \cI$.
\end{proof}

\begin{prop}\label{atmost3_bis}
Through a point of $\cO_3'$ not lying in $\cI$, there pass at most three lines of $\cE$.
\end{prop}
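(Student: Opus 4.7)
The plan is to argue by contradiction. Assume $P \in \cO_3' \setminus \cI$ carries at least four lines $t_1, t_2, t_3, t_4$ of $\cE$ and set $T_i = t_i \cap \pi_0$ and $Q_i = t_i \cap s$. The goal is to deduce that $P \in \cI$, contradicting the hypothesis.

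First I would invoke Corollary~\ref{non-collinear}, applied to every triple of the lines $t_i$, to conclude that no three of $T_1, T_2, T_3, T_4$ are collinear in $\pi_0$. Since $P \in \cO_3'$ lies on no extended subline of $\pi_0$, the projection $\phi_P \colon \pi_0 \to \ell$ from $P$ is injective and its image is a splash $\cS_P$ of $\ell$ satisfying $\phi_P(T_i) = Q_i$ for each $i$; hence $|s \cap \cS_P| \ge 4$.

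Next I would appeal to the splash analogue of the club--subline bound underlying Lemma~\ref{atmost3} (extractable from the techniques of \cite{LV,BJ2}): a $q$-order subline of $\PG(1, q^3)$ either meets a splash in at most three points or is entirely contained in it. Having $|s \cap \cS_P| \ge 4$ therefore forces $s \subseteq \cS_P$, so $s$ is one of the $2(q^2+q+1)$ sublines of $\cS_P$. Because $\cS_P$ arises by projection, its two families consist of projections of $q$-order sublines of $\pi_0$ and of subconics of $\pi_0$, so the preimage $B = \phi_P^{-1}(s) \subset \pi_0$ is either a $q$-order subline $\bar r$ or a subconic $\cC$.

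If $B = \bar r$, with extended line $r$, then as in the proof of Corollary~\ref{non-collinear} I would rule out $r \in \cD$: otherwise $R = r \cap \ell \in s = \phi_P(\bar r)$ would lie on both $r$ and on a line $PT$ with $T \in \bar r \subset r$, so that line $PT$ coincides with $r$, forcing $P \in r$ and contradicting $P \in \cO_3'$. Thus $r \notin \cD$, and by Proposition~\ref{q+1} the point $P$ is the unique point projecting $\bar r$ onto $s$, i.e.\ $P \in \cI$, giving the desired contradiction.

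The hard part will be ruling out the subconic case $B = \cC$. A natural line of attack is via Theorem~\ref{splash_main}: since $\cS$ and $\cS_P$ are distinct splashes both containing $s$, they must share a second subline $s'$ belonging to the family opposite to that of $s$ in $\cS$; because $s \in \cF_1$, this $s'$ lies in $\cF_2$ and therefore has the form $s_T$ for some $T \in \pi_0$, namely the splash subline carved on $\ell$ by the extensions of the lines of $\pi_0$ through $T$. Pulling $s'$ back to $\pi_0$ via $\phi_P^{-1}$ (yielding either a $q$-order subline or a subconic of $\pi_0$, according to the family of $s'$ in $\cS_P$) and comparing it with $\cC$, which contains the four points $T_1, T_2, T_3, T_4$ in general position, should force three of the $T_i$ to lie on a secant of $\cC$, hence to be collinear in $\pi_0$, contradicting Corollary~\ref{non-collinear} and closing the argument.
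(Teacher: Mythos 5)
Your overall framework coincides with the paper's: project $\pi_0$ from $P$ onto $\ell$ to obtain a splash $\cS_P$, use \cite[Theorem 8]{LV} to upgrade $|s\cap\cS_P|\ge 4$ to $s\subseteq\cS_P$, and then analyse the preimage $\phi_P^{-1}(s)$. Your handling of the case where this preimage is a $q$--order subline is sound (it is essentially the content of Corollary~\ref{non-collinear} and of the construction of $\cI$ in Proposition~\ref{q+1}). The gap is in the subconic case, which you yourself identify as the hard part but leave as a speculative sketch. The mechanism you propose cannot work as stated: you hope that pulling back the second shared subline $s'$ will force three of $T_1,\dots,T_4$ to be collinear, but the $T_i$ are the preimages of the points $Q_i\in s$, and since two distinct $q$--order sublines of $\PG(1,q^3)$ meet in at most two points, at most two of the $Q_i$ can lie on $s'$; hence $\phi_P^{-1}(s')$ contains at most two of the $T_i$ and no collinearity among three of them can be extracted from it.

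The contradiction in the paper is of a different nature and rests on Lemma~\ref{projection}. Because the $T_i$ are not collinear, $s$ belongs to the ``subconic'' family of $\cS_P$, so the shared subline $s'$ provided by Theorem~\ref{splash_main} lies in the opposite family of $\cS_P$, i.e.\ $\phi_P^{-1}(s')=\bar r$ is a $q$--order subline of $\pi_0$ and $s'$ is the projection of $\bar r$ from $P$. On the other hand, as a subline of $\cS$ opposite to $s$ (recall $s\in\cF_1$ arises from the dual subconic $\cD$), $s'$ arises from the pencil of extended sublines of $\pi_0$ through a point $T\in\pi_0$. One then checks $T\notin r$, where $r$ is the line extending $\bar r$: otherwise $R=r\cap\ell$ would lie on $s'$, the line joining $P$ and $R$ would meet $\pi_0$ in a point of $\bar r$, forcing $P\in r$ and hence $P\in\cO_2'$, a contradiction. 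Consequently $s'$ is obtained by projecting $\bar r$ onto $\ell$ both from $T$ and from $P\ne T$, contradicting the uniqueness asserted in Lemma~\ref{projection}. Without an argument of this kind (or a genuine substitute), your proof is incomplete precisely in the case that carries all the difficulty.
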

\begin{proof}
Let $P$ be a point of $\cO_3' \setminus \cI$. By projecting $\pi_0$ from $P$ onto $\ell$, we get a splash $\cS'$ of $\ell$. If $\cS = \cS'$, then there are $q+1$ lines of $\PG(2, q^3)$ passing through $P$ and meeting both $s$ and $\pi_0$ in at least one point and hence $P \in \cI$, which is not the case. Hence $\cS \ne \cS'$. Assume by contradiction that there are at least four lines of $\cE$ through the point $P$. Then $|s \cap \cS'| \ge 4$.  By \cite[Theorem 8]{LV}, it follows that $s$ is a $q$--order subline of $\cS'$ as well. Since $\cS$ and $\cS'$ have in common $s$, from Theorem \ref{splash_main}, we have that $\cS$ and $\cS'$ have a further $q$--order subline $s'$ in common and $s'$ belongs to the opposite family of $s$. From Corollary \ref{non-collinear}, if we consider $s$ as a $q$--order subline of $\cS'$, then $s$ is obtained by projecting the points of a subconic of $\pi_0$ from $P$ onto $\ell$. Hence, when $s'$ is considered as a $q$--order subline of $\cS'$, it is obtained by projecting a subline $\bar{r}$ of $\pi_0$ from $P$ onto $\ell$. Similarly, since $s$, as a $q$--order subline of $\cS$, arises from the extended sublines of a dual subconic of $\pi_0$, we have that when $s'$ is considered as a  $q$--order subline of $\cS$, it is obtained by extending the sublines of $\pi_0$ through a point $T$ of $\pi_0$. Let $r$ be the line of $\PG(2, q^3)$ such that $r \cap \pi_0 = \bar{r}$ and let $R = r \cap \ell$. If $T$ were on $r$, then $R$ would belong to $s'$ and the line $m$ joining $R$ with $P$ would meet $\pi_0$ at a point of $\bar{r}$. Hence $m = r$ and $P \in \cO_2'$, a contradiction. Therefore $T \notin r$ and in particular $T \notin \bar{r}$. It follows that $s'$ is obtained by projecting $\bar{r}$ from $T$ onto $\ell$. On the other hand $s'$ is obtained by projecting $\bar{r}$ from $P$ onto $\ell$, with $P \ne T$. This contradicts Lemma \ref{projection}.        
\end{proof}

The results achieved in Lemma \ref{atmost2}, Lemma \ref{atmost3}, Proposition \ref{q+1} and Proposition \ref{atmost3_bis} can be summarized in the following theorem.

\begin{theorem}\label{plane1}
Through a point of $\PG(2, q^3)$, $q >2$, not lying in $\pi_0 \cup s$, there pass $0, 1, 2, 3$ or $q+1$ lines of $\cE$. In the last case, we get the $q^2$ points of $\cI \subset \cO_3'$. 
\end{theorem}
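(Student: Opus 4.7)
The plan is to treat Theorem \ref{plane1} as a bookkeeping statement that assembles the four preceding results into a single classification. I would start by partitioning the points of $\PG(2,q^3) \setminus (\pi_0 \cup s)$ according to which of the $\PGL(3,q^3)$-orbits $\cO_2'$ and $\cO_3'$ (from Lemma \ref{demp}) contains them, using the fact that every point of $\PG(2,q^3)$ lies in exactly one of $\pi_0$, $\cO_2'$, $\cO_3'$. Since $s \subset \ell \cap \cO_2'$, removing $s$ only affects the $\cO_2'$-stratum.

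The case analysis then runs as follows. First, for $P \in \cO_2' \setminus s$ I would split according to whether $P$ lies on some line of $\cD$: lines of $\cD$ are extended sublines of $\pi_0$, so they contain only points of $\pi_0 \cup \cO_2'$, and they meet $\ell$ in points of $s$. Thus Lemma \ref{atmost2} applies to the points on a line of $\cD$ (bounding the count by $1$), while Lemma \ref{atmost3} applies to the remaining points of $\cO_2'$ (bounding the count by $3$). Second, for $P \in \cO_3'$ I would single out the exceptional set $\cI$ produced in Proposition \ref{q+1}: on $\cI$ the number of lines of $\cE$ through $P$ equals $q+1$, and on $\cO_3' \setminus \cI$ Proposition \ref{atmost3_bis} caps it at $3$.

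Collecting the bounds $\{0,1\}$, $\{0,1,2,3\}$, $\{q+1\}$, and $\{0,1,2,3\}$ across the four subcases, the only values that can occur for the number of lines of $\cE$ through a point $P \notin \pi_0 \cup s$ are $0, 1, 2, 3$ or $q+1$, with the last value realized exactly on the $q^2$ points of $\cI \subset \cO_3'$. The hypothesis $q>2$ is inherited from Proposition \ref{q+1}, which is where it was actually used.

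I do not expect any genuine obstacle here, since all the content is in the earlier lemmas: the only care required is to verify that the partition is exhaustive (including the corner case of points on $\ell$ off $s$, which automatically give $0$ lines of $\cE$ because any such line would have to coincide with $\ell$, contradicting $\ell \cap \pi_0 = \emptyset$) and that the four numerical bounds merge cleanly into the stated set $\{0,1,2,3,q+1\}$.
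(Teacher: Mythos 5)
Your proposal is correct and matches the paper's treatment: the paper gives no separate argument for Theorem \ref{plane1}, stating explicitly that it merely summarizes Lemma \ref{atmost2}, Lemma \ref{atmost3}, Proposition \ref{q+1} and Proposition \ref{atmost3_bis}, which is exactly the case-by-case assembly you carry out (your extra check for points of $\ell \setminus s$ is harmless, as those points are already covered by the $\cO_2'$/$\cO_3'$ cases). The only point worth making explicit is that $q>2$ also guarantees $q+1>3$, so the value $q+1$ is attained only on $\cI$, which is what justifies the final sentence of the statement.
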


Let $z_i$ denote the number of points $P \in \cO_3'$ such that there are $i$ lines of $\cE$ through $P$, $i = 2, 3, q+1$ and let $z_j'$ be the number of points $P \in \cO_2' \setminus s$ such that there are $j$ lines of $\cE$ through $P$, $j = 2, 3$. We have that $z_{q+1} = q^2$. Let us count in two ways the pairs $(r, r')$, where $r, r' \in \cE$ and $r \cap r' \in (\cO_2' \cup \cO_3') \setminus s$. For a fixed $r \in \cE$, let $R = r \cap \pi_0$. There are $q^3-q+i$ lines of $\cE$ intersecting $r$ in a point of $(\cO_2' \cup \cO_3') \setminus s$ according as through the point $R$ there pass $i$ lines of $\cD$, $i = 0, 1, 2$. There are $q+1$ points of $\pi_0$ incident with one line of $\cD$, $q(q+1)/2$ points of $\pi_0$ incident with $2$ lines of $\cD$ and $q(q-1)/2$ points of $\pi_0$ on no line of $\cD$. Hence on the one hand the number of these couples equals
$$
\frac{q(q-1)}{2} \cdot (q+1) \cdot (q^3-q) + \frac{q(q+1)}{2} \cdot (q-1) \cdot (q^3-q+2) + q \cdot (q+1) \cdot (q^3-q+1). 
$$
On the other hand, the number of these couples turns out to be $2(z_2 + z_2') + 6(z_3 + z_3') + q^2 \cdot q(q+1)$. Comparing these two quantities, we have that
\begin{equation}\label{double_count1}
z_2 + z_2' + 3(z_3 + z_3') = \frac{q^2(q+1)(q^3-2q+1)}{2}. 
\end{equation}
Analogously, let us count in two ways the pairs $(r, r')$, where $r, r' \in \cE$ and $r \cap r' \in \cO_2' \setminus s$. Let $r, r' \in \cE$, with $r \ne r'$ and let $P = r \cap r'$. First note that from Lemma \ref{atmost2}, if $P \in \cO_2' \setminus s$, then $P$ does not lie on a line of $\cD$. Moreover from Proposition \ref{rango2}, for two fixed points $A, B \in s$, there are at most $(q-1)(q^2-q+1)$ points $P \in \cO_2' \setminus s$ such that $r \cap s = A$, $r' \cap s = B$ and $r \cap r' = P$. Therefore we have that $2z_2' + 6z_3' \le (q-1)(q^2-q+1) \cdot q(q+1)$, that is
\begin{equation}\label{double_count2}
z_2' + 3z_3' \le \frac{(q^3-q)(q^2-q+1)}{2}.
\end{equation}

\begin{prop}\label{bound}
If $q \ge 5$, then $z_2 + 2z_3 > q^5 - 2q^3$.
\end{prop}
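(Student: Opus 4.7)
The plan is to convert the lower bound on $z_2 + 3z_3$ coming out of (\ref{double_count1})--(\ref{double_count2}) into a lower bound on $z_2 + 2 z_3$ by supplementing it with a second lower bound, this time on $z_2 + z_3$. That second bound is forced by the fact, guaranteed by Theorem \ref{plane1}, that for every $P \in \cO_3'$ the number $i(P)$ of lines of $\cE$ through $P$ lies in $\{0,1,2,3,q+1\}$, so the crude convexity estimate $\binom{i(P)}{2} \le \binom{q+1}{2}$ is available.

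First, subtracting (\ref{double_count2}) from (\ref{double_count1}) eliminates the primed unknowns and, after routine expansion, yields
$$
z_2 + 3z_3 \;\ge\; \frac{q^2(q+1)(q^3-2q+1) - (q^3-q)(q^2-q+1)}{2} \;=\; \frac{q(q^2-1)(q^3-1)}{2} \;=:\; L.
$$
Next, I would double-count the pairs of lines of $\cE$ meeting in $\cO_3'$. On the one hand this count equals $z_2 + 3z_3 + q^2\binom{q+1}{2}$; on the other, since each summand $\binom{i(P)}{2}$ is at most $\binom{q+1}{2}$ and only the points with $i(P)\ge 2$ (of which there are $z_2+z_3+q^2$) contribute, the count is at most $\binom{q+1}{2}(z_2 + z_3 + q^2)$. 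Cancelling the $q^2\binom{q+1}{2}$ term yields $z_2 + 3z_3 \le \binom{q+1}{2}(z_2+z_3)$, and combining with the first inequality gives
$$
z_2 + z_3 \;\ge\; \frac{L}{\binom{q+1}{2}} \;=\; \frac{q(q^2-1)(q^3-1)/2}{q(q+1)/2} \;=\; (q-1)(q^3-1).
$$

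Finally, the identity $2(z_2+2z_3) = (z_2 + 3z_3) + (z_2 + z_3)$ combined with the two lower bounds produces
$$
z_2 + 2z_3 \;\ge\; \frac{L + (q-1)(q^3-1)}{2} \;=\; \frac{q^6 + q^4 - 3q^3 - q + 2}{4},
$$
so the desired inequality $z_2 + 2z_3 > q^5 - 2q^3$ reduces to $q^6 - 4q^5 + q^4 + 5q^3 - q + 2 > 0$, which is immediate for $q \ge 5$ since $q^6 - 4q^5 = q^5(q-4) \ge q^5$ dominates the remaining terms. The only conceptual input beyond (\ref{double_count1})--(\ref{double_count2}) is the structural restriction $i(P) \in \{0,1,2,3,q+1\}$: without Proposition \ref{atmost3_bis} excluding intermediate multiplicities $i(P)\in\{4,\dots,q\}$ at points of $\cO_3' \setminus \cI$, the bound $\binom{i(P)}{2}\le\binom{q+1}{2}$ would remain true but the inequality $z_2 + z_3 \ge (q-1)(q^3-1)$ would be much harder to justify by this averaging argument.
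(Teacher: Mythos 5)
Your argument is correct, but it reaches the conclusion by a different route than the paper. Both start the same way, combining \eqref{double_count1} and \eqref{double_count2} to get $z_2+3z_3\ge \frac{q(q^2-1)(q^3-1)}{2}$. The paper then invokes Corollary \ref{non-collinear} to bound $z_3$ from above by the number of triangles of $\pi_0$, namely $q^3(q+1)(q^2+q+1)/6$, and writes $z_2+2z_3=(z_2+3z_3)-z_3$; you instead extract a second lower bound, $z_2+z_3\ge (q-1)(q^3-1)$, and average it with the first. Your route is valid and has the merit of not needing Corollary \ref{non-collinear} at all, but two remarks are in order. First, your ``double count'' is not really new information: the inequality $z_2+3z_3\le\binom{q+1}{2}(z_2+z_3)$ follows at once from $3\le\binom{q+1}{2}$, and in fact the whole detour is unnecessary, since $z_2\ge 0$ already gives $z_2+2z_3\ge\frac{2}{3}(z_2+3z_3)\ge\frac{q(q^2-1)(q^3-1)}{3}\sim q^6/3$, which exceeds $q^5-2q^3$ for $q\ge 4$ and is even slightly stronger than your final bound $\sim q^6/4$ (the paper's estimate is also $\sim q^6/3$). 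Second, your closing comment slightly misattributes where Proposition \ref{atmost3_bis} enters: the restriction of the multiplicities to $\{0,1,2,3,q+1\}$ is already built into the definitions of $z_2,z_3,z_{q+1}$ and into the derivation of \eqref{double_count1}, so once \eqref{double_count1} and \eqref{double_count2} are granted, your averaging step uses nothing beyond nonnegativity; the structural input is consumed upstream, not in your estimate $\binom{i(P)}{2}\le\binom{q+1}{2}$.
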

\begin{proof}
Taking into account \eqref{double_count1} and \eqref{double_count2}, we have that $z_2 + 3z_3 \ge (q^3-q)(q^3-1)/2$. On the other hand, from Corollary \ref{non-collinear}, we have that the value $z_3$ cannot exceed the number of triangles of $\pi_0$, that is $q^3(q+1)(q^2+q+1)/6$. Hence 
\begin{equation}
\begin{split}
z_2 + 2 z_3 & \ge  \frac{(q^3-q)(q^3-1)}{2} - z_3 \ge \frac{(q^3-q)(q^3-1)}{2} - \frac{q^3(q+1)(q^2+q+1)}{6} \\ \nonumber
&  =  \frac{2q^6-2q^5-5q^4-4q^3+3q}{6}  > q^5 - 2q^3,
\end{split}
\end{equation}
whenever $q \ge 5$.
\end{proof}

\subsection{Cutting blocking sets of $\PG(3, q^3)$ as union of three $q$--order subgeometries}

Let $\PG(3, q^3)$ be the three--dimensional projective space over $\GF(q^3)$.

\begin{lemma}\label{intersection}
A plane of $\PG(3, q^3)$ shares with a $q$--order subgeometry of $\PG(3, q^3)$ either one point or $q+1$ points of a $q$--order subline or $q^2+q+1$ points of a $q$--order subplane. 
\end{lemma}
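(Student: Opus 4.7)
The plan is to choose coordinates adapted to $\Sigma$ and reduce the problem to a purely linear one over $\GF(q)$. Since all $q$--order subgeometries of $\PG(3,q^3)$ are projectively equivalent, I may assume w.l.o.g. that $\Sigma$ is the canonical subgeometry, i.e., the set of points of $\PG(3, q^3)$ admitting homogeneous coordinates $(X_1 : X_2 : X_3 : X_4)$ with $X_i \in \GF(q)$. Write the plane as $\pi: aX_1 + bX_2 + cX_3 + dX_4 = 0$ for some $(a,b,c,d) \in \GF(q^3)^4 \setminus \{0\}$.

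Next, I would fix a basis $\{e_0,e_1,e_2\}$ of $\GF(q^3)$ over $\GF(q)$ and decompose each coefficient in the form $a = \sum_i a^{(i)} e_i$, with $a^{(i)} \in \GF(q)$, and analogously for $b,c,d$. For a point with coordinates in $\GF(q)^4$, the defining equation of $\pi$ rewrites as
$$
\sum_{i=0}^{2} \bigl(a^{(i)} X_1 + b^{(i)} X_2 + c^{(i)} X_3 + d^{(i)} X_4\bigr)\, e_i = 0.
$$
By $\GF(q)$--linear independence of $e_0, e_1, e_2$, each of the three inner $\GF(q)$--linear forms must vanish. Hence $\pi \cap \Sigma$ coincides with the simultaneous zero set in $\PG(3,q) \cong \Sigma$ of three $\GF(q)$--linear forms $L_0, L_1, L_2$.

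Finally, since $(a,b,c,d) \neq 0$, at least one of these forms is nonzero, so the rank $r$ of the system $L_0 = L_1 = L_2 = 0$ satisfies $r \in \{1, 2, 3\}$. Consequently $\pi \cap \Sigma$ is a projective subspace of $\Sigma$ of dimension $3-r \in \{0,1,2\}$, and is therefore a single point, a line of $\Sigma$, or a plane of $\Sigma$, with cardinalities $1$, $q+1$ and $q^2+q+1$ respectively. There is no serious obstacle here: the only point that needs explicit attention is excluding the degenerate rank--$0$ case, which is automatic since $\pi$ is an honest plane. The trichotomy of the lemma then follows, with the $q+1$-- and $(q^2+q+1)$--point cases consisting precisely of the points of a $q$--order subline and of a $q$--order subplane, respectively.
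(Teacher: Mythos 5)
Your proof is correct, but it follows a different route from the paper. You argue by coordinates and field reduction: after normalizing $\Sigma$ to the canonical subgeometry, you expand the single defining form of the plane over a $\GF(q)$--basis of $\GF(q^3)$, obtaining three $\GF(q)$--linear forms whose common zero set in $\PG(3,q)$ is exactly $\pi \cap \Sigma$; since the rank of this system is $1$, $2$ or $3$, the intersection is a subspace of $\Sigma$ of projective dimension $2$, $1$ or $0$, giving the trichotomy (and, in passing, that no plane is disjoint from $\Sigma$). The paper instead argues synthetically: it fixes a point $P \in \Sigma$ and a plane $\pi$ meeting $\Sigma$ in a subplane $\pi_0$ with $P \notin \pi$, observes that a plane $\sigma$ through $P$ meets $\pi$ in a line $r$ and that $|\sigma \cap \Sigma|$ is $1$, $q+1$ or $q^2+q+1$ according as $|r \cap \pi_0|$ is $0$, $1$ or $q+1$, and then uses the orbit description of Lemma \ref{demp} to count the planes of each type. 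Your argument is more elementary and self-contained (it does not invoke Dempwolff's lemma), while the paper's counting has the added benefit of producing the exact numbers of planes of each intersection type, which is extra information recorded in that proof. Both are complete; the only point worth making explicit in your write-up is that the line (resp.\ plane) of $\Sigma$ you obtain is indeed a $q$--order subline (resp.\ subplane) of the line (resp.\ plane) of $\PG(3,q^3)$ it spans, which is immediate.
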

\begin{proof}
Let $\Sigma$ be a $q$--order subgeometry of $\PG(3, q^3)$, let $P$ be a point of $\Sigma$ and let $\pi$ be a plane of $\PG(3, q^3)$ such that $\pi_0 = \pi \cap \Sigma$ is a $q$--order subplane of $\pi$, with $P \notin \pi$. A plane $\sigma$ of $\PG(3, q^3)$ containing $P$ intersects $\pi$ in a line $r$ and $|\sigma \cap \Sigma|$ equals $1$, $q+1$ or $q^2+q+1$, according as $r \cap \pi_0$ equals $0$, $1$ or $q+1$, respectively. From Lemma \ref{demp}, in $\pi$ there are $q(q^2-1)(q^2+q+1)$ lines intersecting $\pi_0$ in one point and $q^3(q-1)(q^2-1)$ lines disjoint from $\pi_0$. Hence apart from the $(q+1)(q^2+1)$ planes of $\PG(3, q^3)$ intersecting $\Sigma$ in a $q$--order subplane, there are $q(q^2+q+1)(q^4-1)$ planes of $\PG(3, q^3)$ meeting $\Sigma$ in a $q$--order subline and $q^3(q^2-1)(q^4-1)$ planes of $\PG(3, q^3)$ having in common with $\Sigma$ exactly one point.
\end{proof}

We will refer to a line or a plane of $\PG(3, q^3)$ intersecting a $q$--order subgeometry $\Sigma$ in $q+1$ or $q^2+q+1$ points as a line or a plane of $\Sigma$, respectively. Let $\Sigma_1 = \PG(3, q)$ be the canonical $q$--order subgeometry embedded in $\PG(3, q^3)$.  Let $G = \PGL(4, q)$ be the stabilizer of $\Sigma_1$ in $\PGL(4, q^3)$ and let $\iota$ be the collineation of order three of $\PG(3, q^3)$ fixing pointwise $\Sigma_1$. 

\begin{lemma}
The group $G$ has three orbits on points of $\PG(3, q^3)$: 
\begin{itemize}
\item $\Sigma_1$;
\item $\cO_2$ of size $q(q^2+q+1)(q^4-1)$ consisting of points lying on exactly one line of $\Sigma_1$; 
\item $\cO_3$ of size $q^3(q^2-1)(q^4-1)$.
\end{itemize}
\end{lemma}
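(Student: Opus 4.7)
The plan is to exploit the order-$3$ collineation $\iota$ of $\PG(3,q^3)$ that fixes $\Sigma_1$ pointwise. A subspace of $\PG(3,q^3)$ is stabilized setwise by $\iota$ if and only if it is defined over $\GF(q)$, i.e., it is a subspace of $\Sigma_1$. Since $G=\PGL(4,q)$ acts on $\Sigma_1\cong\PG(3,q)$ as the full projective linear group, $\Sigma_1$ is evidently a $G$-orbit.

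For $P\in\PG(3,q^3)\setminus\Sigma_1$ I would examine the triple $\{P,P^\iota,P^{\iota^2}\}$ of three distinct points. Any line of $\Sigma_1$ through $P$ is $\iota$-invariant, hence contains the entire $\iota$-orbit; therefore such a line exists if and only if $P,P^\iota,P^{\iota^2}$ are collinear, and in that case it is unique (two distinct lines of $\Sigma_1$ meeting at $P$ would force $P\in\Sigma_1$). Define $\cO_2$ to be the set of points of $\PG(3,q^3)\setminus\Sigma_1$ lying on some (necessarily unique) line of $\Sigma_1$, and $\cO_3$ to be the rest. The size of $\cO_2$ follows from a double count: each of the $(q^2+1)(q^2+q+1)$ lines of $\Sigma_1$ carries $q^3-q$ points outside $\Sigma_1$, so $|\cO_2|=(q^2+1)(q^2+q+1)(q^3-q)=q(q^2+q+1)(q^4-1)$. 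Subtracting from $|\PG(3,q^3)|=q^9+q^6+q^3+1$ gives $|\cO_3|=q^3(q^2-1)(q^4-1)$.

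For transitivity of $G$ on $\cO_2$, use that $G$ is transitive on lines of $\Sigma_1$ and that the stabilizer of a line $\ell$ induces $\PGL(2,q)$ on $\ell\cong\PG(1,q^3)$, a group of order $q^3-q$ which acts regularly on $\PG(1,q^3)\setminus\PG(1,q)$. For transitivity on $\cO_3$, note that $G$ acts transitively on the $q^3+q^2+q+1$ planes of $\Sigma_1$, and the stabilizer of such a plane $\pi$ induces $\PGL(3,q)$ on $\pi\cong\PG(2,q^3)$, whose point-orbits are described by Lemma~\ref{demp}. The main step, and the only genuine obstacle, is to check that the outer orbit $\cO_3'$ of $\pi$ coincides with $\pi\cap\cO_3$: for this it suffices to show that every line of $\Sigma_1$ meeting $\pi$ in a point outside $\Sigma_1$ must be contained in $\pi$, which follows since a line and a plane of $\Sigma_1=\PG(3,q)$ always meet inside $\Sigma_1$. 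Once this compatibility is established, transitivity of $G$ on $\cO_3$ follows immediately from Lemma~\ref{demp}.
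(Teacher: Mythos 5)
Your proof is correct, and it overlaps with the paper's argument for the third orbit while diverging in the treatment of $\cO_2$ and of the counts. The paper's proof is entirely plane-based: it invokes transitivity of $G$ on the extended planes of $\Sigma_1$ and lets the three orbits of Lemma~\ref{demp} on a fixed plane ``give rise to'' $\Sigma_1$, $\cO_2$, $\cO_3$, computing the sizes through incidences between points of $\cO_2$ and planes of $\Sigma_1$. You instead characterise $\cO_2$ intrinsically via the conjugate triple $\{P,P^\iota,P^{\iota^2}\}$ and obtain transitivity on $\cO_2$ from transitivity on lines of $\Sigma_1$ together with the regular action of $\PGL(2,q)$ on $\PG(1,q^3)\setminus\PG(1,q)$ --- a fact worth one line of justification: a nontrivial element of $\PGL(2,q)$ has its fixed points in $\PG(1,q^2)$, hence only in $\PG(1,q)$ when restricted to $\PG(1,q^3)$, so the action off the subline is semiregular and, the two cardinalities being equal to $q^3-q$, regular. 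For $\cO_3$ you follow the same plane-based route as the paper (transitivity on planes plus Lemma~\ref{demp}), but you make explicit the fusion step the paper compresses into ``give rise to'', namely that $\pi\cap\cO_3=\cO_3'$ because a line of $\Sigma_1$ meeting $\pi$ off $\Sigma_1$ must be contained in $\pi$; you should also record (it follows from your opening remarks) that every point of $\cO_3$ does lie on a plane of $\Sigma_1$, namely the $\iota$-invariant plane $\langle P,P^\iota,P^{\iota^2}\rangle$, which is needed before transitivity on planes can be applied. Your size computation --- counting $\cO_2$ along the $(q^2+1)(q^2+q+1)$ lines of $\Sigma_1$, each carrying $q^3-q$ points off $\Sigma_1$, and getting $\cO_3$ by subtraction --- is a cleaner alternative to the paper's plane-incidence count and yields the stated values.
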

\begin{proof}
Let $\pi$ be a plane of $\Sigma_1$ and let $G_{\pi}$ be the stabilizer of $\pi$ in $G$. The group $G$ is transitive on the $(q+1)(q^2+1)$ planes of $\Sigma_1$, hence the three $G_{\pi}$--orbits on points of $\pi$ give rise to $\Sigma_1$, $\cO_2$, $\cO_3$. To compute their size, note that two planes of $\Sigma_1$ have in common $q+1$ points of $\Sigma_1$ and $q^3-q$ points of $\cO_2$ and that a point of $\cO_2$ lies on exactly $q+1$ planes of $\Sigma_1$. 
\end{proof}

Note that a point $P \in \PG(3, q^3) \setminus \Sigma_1$ belongs to $\cO_2$ or $\cO_3$ according as the points $P, P^{\iota}, P^{\iota^2}$ span a line or a plane of $\Sigma_1$, respectively. 

\begin{lemma}\label{lines}
The group $G$ has five orbits on lines of $\PG(3, q^3)$: 
\begin{itemize}
\item $\cL_1$ of size $(q^2+1)(q^2+q+1)$ consisting of lines of $\Sigma_1$. A line of $\cL_1$ has $q+1$ points in common with $\Sigma_1$ and $q^3-q$ points in common with $\cO_2$;
\item $\cL_2$ of size $q(q+1)(q^2+q+1)(q^4-1)$ consisting of lines meeting $\Sigma_1$ in one point and contained in a plane of $\Sigma_1$. A line of $\cL_2$ consists of one point of $\Sigma_1$, $q^2$ points of $\cO_2$ and $q^3-q^2$ points of $\cO_3$;
\item $\cL_3$ of size $q^3(q^2-1)(q^4-1)$ consisting of lines meeting $\Sigma_1$ in one point and not contained in a plane of $\Sigma_1$. A line of $\cL_3$ has one point in common with $\Sigma_1$ and $q^3$ points in common with $\cO_3$;
\item $\cL_4$ of size $q^3(q^2-1)(q^4-1)$ consisting of lines disjoint from $\Sigma_1$ and contained in a plane of $\Sigma_1$. A line of $\cL_4$ consists of $q^2+q+1$ points of $\cO_2$ and $q^3-q^2-q$ points of $\cO_3$;
\item $\cL_5$ of size $q^5(q^3-1)(q^4-1)$ consisting of lines disjoint from $\Sigma_1$ and not contained in a plane of $\Sigma_1$. A line of $\cL_5$ has $q+1$ points in common with $\cO_2$ and $q^3-q$ points in common with $\cO_3$.
\end{itemize}
\end{lemma}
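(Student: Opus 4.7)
My plan is to classify the lines of $\PG(3, q^3)$ by the mutual position of $\ell, \ell^\iota, \ell^{\iota^2}$, check that each resulting class is a single $G$-orbit, and then read off the sizes and intersection patterns. If $\ell = \ell^\iota$ then $\ell$ is a line of $\Sigma_1$, giving $\cL_1$. Otherwise the three images are distinct, and we split by two binary invariants: whether $\ell \cap \ell^\iota$ is empty or a point, and, in the latter case, whether that point is $\iota$-fixed. If $\ell \cap \ell^\iota = \{Q\}$ with $Q \in \Sigma_1$, the three concurrent lines either lie in a $\iota$-invariant plane (a plane of $\Sigma_1$), yielding $\cL_2$, or span $\PG(3, q^3)$, yielding $\cL_3$. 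If $\ell \cap \ell^\iota = \{Q\}$ with $Q \notin \Sigma_1$, the three lines form a triangle in the $\iota$-invariant plane they span, which is a plane of $\Sigma_1$, while $\ell \cap \Sigma_1 = \emptyset$; this is $\cL_4$. Finally, $\ell \cap \ell^\iota = \emptyset$ forces $\ell, \ell^\iota, \ell^{\iota^2}$ pairwise skew and $\ell$ cannot lie in any plane of $\Sigma_1$ (otherwise $\ell$ and $\ell^\iota$ would be coplanar, hence meet); this is $\cL_5$.

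Transitivity of $G$ is handled case by case. $\cL_1$ is classical. For $\cL_2$, $G$ is transitive on incident (point, plane) flags of $\Sigma_1$; after fixing a flag $(P, \pi)$, the flag-stabilizer acts on $\pi$ as the stabilizer of $(P, \pi_0)$ in $\PGL(3, q^3)$, and by Lemma~\ref{demp} it is transitive on the $q^3 - q$ lines through $P$ in $\pi$ not lying in $\pi_0$. For $\cL_3$, fix $P \in \Sigma_1$; the pencil of lines through $P$ in $\PG(3, q^3)$ is a $\PG(2, q^3)$ on which $G_P$ acts as $\PGL(3, q)$ fixing the subplane of lines of $\Sigma_1$ through $P$, and by Lemma~\ref{demp} the $\cL_3$-lines through $P$ form the single orbit $\cO_3'$. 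For $\cL_4$, fix a plane $\pi$ of $\Sigma_1$ and apply Lemma~\ref{demp} to see that $G_\pi$ acts transitively on lines of $\pi$ disjoint from $\pi_0$.

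The main obstacle is $\cL_5$. Three pairwise skew lines in $\PG(3, q^3)$ lie on a unique hyperbolic quadric $\cQ$; since $\{\ell, \ell^\iota, \ell^{\iota^2}\}$ is $\iota$-stable, $\cQ$ is defined over $\GF(q)$, so $\cQ \cap \Sigma_1$ is a hyperbolic quadric of $\Sigma_1$. The group $G$ is transitive on such quadrics, and for each of the two rulings of $\cQ$ in $\PG(3, q^3)$, the stabilizer $G_\cQ$ induces an action on the ruling (a $\PG(1, q^3)$) containing the action of $\PGL(2, q)$ on $\PG(1, q^3)$. Since the eigenvalues of a non-identity element of $\PGL(2, q)$ lie in $\GF(q^2) \cap \GF(q^3) = \GF(q)$, no point of $\PG(1, q^3) \setminus \PG(1, q)$ is fixed, and $|\PGL(2, q)| = q^3 - q = |\PG(1, q^3) \setminus \PG(1, q)|$, so the action is regular on ruling lines not already in $\Sigma_1$. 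Hence $G$ is transitive on $\cL_5$.

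The intersection profiles then follow. For $\cL_1$ the $q^3 - q$ non-$\Sigma_1$ points of $\ell$ lie on $\ell$ (a line of $\Sigma_1$), hence in $\cO_2$. For $\cL_2$ and $\cL_4$ the line is of type $\cL_2'$ or $\cL_3'$ in Lemma~\ref{demp}, and the identifications $\cO_i' = \cO_i \cap \pi$ (a point of $\pi \setminus \pi_0$ is in $\cO_2$ iff its $\iota$-orbit lies on an extended subline of $\pi_0$) transfer the counts. For $\cL_3$, any $Q \in \ell \setminus \{P\}$ has $Q, Q^\iota, Q^{\iota^2}$ non-collinear: otherwise they lie on a $\iota$-invariant line $m$, and then the three lines $\ell, \ell^\iota, \ell^{\iota^2}$ all lie in the plane $\langle m, P \rangle$, contradicting the definition of $\cL_3$; hence $Q \in \cO_3$. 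For $\cL_5$, a point $Q \in \ell$ is in $\cO_2$ iff $Q, Q^\iota, Q^{\iota^2}$ span a $\iota$-invariant line $m$, which is a transversal to $\ell, \ell^\iota, \ell^{\iota^2}$; such transversals lying in $\Sigma_1$ are exactly the $q+1$ lines of the opposite ruling of $\cQ \cap \Sigma_1$, so $|\ell \cap \cO_2| = q+1$. Sizes follow by orbit-stabilizer, and as a sanity check $\sum_i |\cL_i| = (q^6+1)(q^6 + q^3 + 1)$, the total number of lines of $\PG(3, q^3)$.
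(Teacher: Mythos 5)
Your proof is correct, and while the skeleton (identify the five classes, prove transitivity on each, then read off the point-type profiles) matches the paper, two of your key arguments are genuinely different. The paper does not organize the classification around the triple $\ell,\ell^{\iota},\ell^{\iota^{2}}$; for $\cL_3$ it works with quadratic cones $\cQ$ of $\PG(3,q^3)$ meeting $\Sigma_1$ in a cone of $\Sigma_1$, counts via the circumscribed bundle of conics that each candidate line lies on exactly $q^2+q+1$ such cones, and uses transitivity of $G$ on the cones of $\Sigma_1$; your alternative — passing to the star of lines at a point $P\in\Sigma_1$, viewing it as a $\PG(2,q^3)$ with the lines of $\Sigma_1$ through $P$ as a $q$--order subplane, and quoting Lemma~\ref{demp} to get transitivity of $G_P$ on the orbit $\cO_3'$ — is shorter and also gives $\cL_2$ for free, avoiding the bundle machinery. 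For $\cL_5$ the paper starts from a hyperbolic quadric $\cH$ with $\cH\cap\Sigma_1$ hyperbolic, shows a line of $\cH$ disjoint from $\Sigma_1$ lies on exactly one such quadric, and concludes by transitivity on the reguli of $\Sigma_1$; you instead produce the quadric intrinsically as the unique hyperbolic quadric through the three pairwise skew conjugates $\ell,\ell^{\iota},\ell^{\iota^{2}}$ and descend it to $\GF(q)$, which makes it transparent that \emph{every} line of $\cL_5$ is reached (a point the paper leaves to implicit counting). A few of your steps deserve a line of justification rather than assertion: the descent of the $\iota$--invariant quadric to a $\GF(q)$--form (Hilbert 90 rescaling of the Gram matrix) together with the fact that the trace on $\Sigma_1$ is nondegenerate and hyperbolic (an elliptic quadric stays elliptic under the odd-degree extension $\GF(q^3)/\GF(q)$); the final step of $\cL_5$--transitivity needs either an element of the quadric stabilizer swapping the two rulings or, as in the paper, transitivity of $G$ on reguli rather than on quadrics; and you never actually compute the orbit sizes — they follow most easily not from orbit–stabilizer but from the fact that each line of $\cL_2,\cL_3,\cL_4$ determines a unique incident point/plane of $\Sigma_1$ (giving $(q^3+q^2+q+1)(q^2+q+1)(q^3-q)$, $(q^3+q^2+q+1)\,q^3(q-1)(q^2-1)$, $(q^3+q^2+q+1)\,q^3(q-1)(q^2-1)$ respectively), with $|\cL_5|$ obtained by subtraction, so your ``sanity check'' on the total would otherwise be circular. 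These are gaps in detail, not in the idea.
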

\begin{proof}
Let $\pi$ be a plane of $\Sigma_1$ and let $G_{\pi}$ be the stabilizer of $\pi$ in $G$. Under the action of $G$ the three $G_{\pi}$--line orbits give rise to $\cL_1$, $\cL_{2}$ and $\cL_{4}$, respectively. Note that a line of $\cL_2$ has $q^2$ points of $\cO_2$ and $q^3-q^2$ points of $\cO_3$, whereas a line belonging to $\cL_4$ has $q^2+q+1$ points of $\cO_2$ and $q^3-q^2-q$ points of $\cO_3$. 

Let $\cQ$ be a quadratic cone of $\PG(3, q^3)$ such that $\cQ \cap \Sigma_1$ is a quadratic cone of $\Sigma_1$. Thus the vertex of $\cQ$, say $V$, belongs to $\Sigma_1$ and $\iota$ stabilizes $\cQ$. If $r$ is a line of $\cQ$ such that $r \cap \Sigma_1 = \{V\}$ and $P$ is a point of $r$, with $P \ne V$, then $P \in \cO_3$, otherwise the three lines $r$, $r^{\iota}$, $r^{\iota^2}$ of $\cQ$ would lie on a plane, a contradiction. This means that $|r \cap \cO_3| = q^3$ and that $r$ cannot be contained in a plane of $\Sigma_1$. On the other hand, if $\pi$ is a plane of $\Sigma_1$ and $V \notin \pi$, then $\pi \cap \cQ$ is a non--degenerate conic $\cQ(2, q^3)$ and $\cQ(2, q^3) \cap \Sigma_1$ is a non--degenerate subconic $\cQ(2, q)$ of $\cQ(2, q^3)$ contained in $\pi \cap \Sigma_1$. Let $R = \pi \cap r$. Then $\pi = \langle R, R^{\iota}, R^{\iota^2} \rangle$ and $R, R^\iota, R^{\iota^2} \in \cQ(2, q^3)$. Note that there are exactly $q^2+q+1$ non--degenerate conics of $\pi$ passing through $R, R^\iota, R^{\iota^2}$ and intersecting $\pi \cap \Sigma_1$ in a non--degenerate subconic. This set of $q^2+q+1$ conics gives rise to the so called circumscribed bundle of $\pi \cap \Sigma_1$; see \cite{BBEF}. It follows that the line $r$ is contained in exactly $q^2+q+1$ quadratic cones of $\PG(3, q^3)$ such that their intersection with $\Sigma_1$ is a quadratic cone of $\Sigma_1$. Since the stabilizer of $\cQ$ in $G$ is transitive on the $q^3-q$ lines of $\cQ$ meeting $\Sigma_1$ exactly in $V$ and $G$ is transitive on the $q^2(q^3-1)(q^2+1)(q+1)$ quadratic cones of $\Sigma_1$ \cite[Section 15.3]{H2}, we have that $r^G = \cL_3$.

Let $\cH$ be a hyperbolic quadric of $\PG(3, q^3)$ such that $\cH \cap \Sigma_1$ is a hyperbolic quadric of $\Sigma_1$. Thus $\cH$ contains $2(q^3+1)$ lines of $\PG(3, q^3)$ on two reguli, say $R_1$ and $R_2$. Among these $2(q^3+1)$ lines, there are $2(q+1)$ that belong to $\cL_1$ and that are on two reguli of $\Sigma_1$, say $\bar{R}_1 \subset R_1$ and $\bar{R}_2 \subset R_2$. If $t$ is a line of $R_1 \setminus \bar{R}_1$ and $P$ is a point of $t$, then $P \in \cO_2$ if and only if $P$ is on a line of $\bar{R}_2$. Similarly if $t \in R_2 \setminus \bar{R}_2$. This means that $|t \cap \cO_2| = q+1$, $|t \cap \cO_3| = q^3-q$ and that $t$ cannot be contained in a plane of $\Sigma_1$. Moreover the line $t$ is contained in exactly one hyperbolic quadric of $\PG(3, q^3)$ such that $\cH \cap \Sigma_1$ is a hyperbolic quadric of $\Sigma_1$, otherwise the three lines $t$, $t^{\iota}$, $t^{\iota^2}$ would lie on two distinct reguli of $\PG(3, q^3)$, a contradiction. Since the stabilizer of $\cH$ in $G$ is transitive on the $2(q^3-q)$ lines of $\cH$ disjoint from $\Sigma_1$ and $G$ is transitive on the $q^4(q^3-1)(q^2+1)$ reguli of $\Sigma_1$ \cite[Section 15.3]{H2}, we have that $t^G = \cL_5$.
\end{proof}

\begin{lemma}\label{planes}
The number of planes of $\PG(3, q^3)$ intersecting $\Sigma_1$ in at least $q+1$ points and passing through a line $\ell$ of $\PG(3, q^3)$, with $|\ell \cap \Sigma_1| \le 1$, equals either $q^2+1$, or $q^2+q+1$, or $1$, or $q+1$ according as $\ell$ belongs either to $\cL_2$, or $\cL_3$, or $\cL_4$, or $\cL_5$, respectively.
\end{lemma}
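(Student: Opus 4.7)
The approach I would take is a uniform double count that handles all four cases simultaneously. By Lemma \ref{intersection}, every plane $\sigma$ of $\PG(3,q^3)$ meets $\Sigma_1$ in either $1$, $q+1$, or $q^2+q+1$ points, and the last case occurs precisely when $\sigma$ is (the extension of) a plane of $\Sigma_1$. The lemma asks for the number of planes through $\ell$ whose intersection with $\Sigma_1$ has size at least $q+1$. I would introduce three unknowns $a$, $b$, $c$ counting the planes through $\ell$ whose intersection with $\Sigma_1$ has size $q+1$, $1$, $q^2+q+1$, respectively; the target is then $a+c$.

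Two linear relations among $a, b, c$ will suffice. The first is the total plane count
\[
a+b+c = q^3+1.
\]
For the second, I would double count pairs $(\sigma,Q)$ with $\sigma$ a plane through $\ell$ and $Q\in\sigma\cap\Sigma_1$: each $Q\in\ell\cap\Sigma_1$ is counted in all $q^3+1$ planes through $\ell$, while each $Q\in\Sigma_1\setminus\ell$ lies in a unique such plane, namely $\langle\ell,Q\rangle$. Using $|\Sigma_1|=q^3+q^2+q+1$, this yields
\[
(q+1)a + b + (q^2+q+1)c \;=\; |\ell\cap\Sigma_1|(q^3+1) + \bigl(|\Sigma_1|-|\ell\cap\Sigma_1|\bigr).
\]

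The value of $c$ is read off directly from the definition of the orbits in Lemma \ref{lines}: a plane of $\Sigma_1$ contains $\ell$ exactly when $\ell$ lies in some plane of $\Sigma_1$, and such a plane is unique. Hence $c=1$ for $\ell\in\cL_2\cup\cL_4$ and $c=0$ for $\ell\in\cL_3\cup\cL_5$. Subtracting the first equation from the second leaves $qa + (q^2+q)c$ on the left, with right-hand side equal to $q^3+q^2+q$ when $|\ell\cap\Sigma_1|=1$ (cases $\cL_2,\cL_3$) and to $q^2+q$ when $|\ell\cap\Sigma_1|=0$ (cases $\cL_4,\cL_5$). Dividing by $q$ gives $a+(q+1)c$ equal to $q^2+q+1$ or $q+1$ respectively; plugging in the appropriate value of $c$ in each of the four cases produces $a+c = q^2+1,\ q^2+q+1,\ 1,\ q+1$, as required. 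There is no real obstacle here beyond bookkeeping; the only point worth stating carefully is the characterization of $c$ in terms of whether $\ell$ is contained in a plane of $\Sigma_1$, which is precisely what distinguishes $\cL_2$ from $\cL_3$ and $\cL_4$ from $\cL_5$ in Lemma \ref{lines}.
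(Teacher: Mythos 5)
Your proof is correct, and it takes a genuinely different route from the paper. The paper argues case by case: for $\ell\in\cL_2$ and $\ell\in\cL_3$ it counts the planes spanned by $\ell$ and the lines of $\Sigma_1$ through the point $\ell\cap\Sigma_1$; for $\ell\in\cL_4$ it uses a pigeonhole count with the unique plane $\pi$ of $\Sigma_1$ through $\ell$ and $|\Sigma_1\setminus\pi|=q^3$; and for $\ell\in\cL_5$ it re-invokes a fact established only inside the proof of Lemma \ref{lines}, namely that $\ell$ lies on a unique hyperbolic quadric meeting $\Sigma_1$ in a hyperbolic quadric of $\Sigma_1$, again finishing by pigeonhole. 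Your uniform double count of incident pairs $(\sigma,Q)$, combined with the trichotomy of Lemma \ref{intersection} and the total $q^3+1$ of planes through a line, replaces all of this machinery and treats the four orbits simultaneously; the only case-dependent inputs are $|\ell\cap\Sigma_1|$ and the number $c$ of planes of $\Sigma_1$ through $\ell$, both read off from Lemma \ref{lines}. What the paper's argument buys in exchange is explicit geometric information (which sublines and which quadric realize the intersections), but for the statement as such your computation is shorter and more transparent. The one point you should make explicit is the inequality $c\le 1$: if two distinct planes of $\Sigma_1$ contained $\ell$, their intersection would be $\ell$, yet two planes of $\Sigma_1$ meet in a line of $\Sigma_1$, i.e.\ a line of $\cL_1$, contradicting $|\ell\cap\Sigma_1|\le 1$; this is a one-line remark (and is the same fact the paper uses elsewhere), so it is not a genuine gap.
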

\begin{proof}
Let $\ell$ be a line of $\PG(3, q^3)$ such that $\ell$ is not a line of $\Sigma_1$. Then there is one or no plane of $\Sigma_1$ containing $\ell$ just as $\ell \in \cL_2 \cup \cL_4$ or $\ell \in \cL_3 \cup \cL_5$. Moreover, if $\ell \in \cL_2 \cup \cL_4$, then $|\ell \cap \Sigma_1| = 1$, whereas if $\ell \in \cL_3 \cup \cL_5$, then $|\ell \cap \Sigma_1| = 0$. It follows that if $\ell \in \cL_2$ there are $q^2$ planes of $\PG(3,q^3)$ containing $\ell$ and meeting $\Sigma_1$ in $q+1$ points, whereas if $\ell \in \cL_3$ there are $q^2+q+1$ planes of $\Sigma_1$ containing $\ell$ and sharing $q+1$ points with $\Sigma_1$. If $\ell \in \cL_4$, let $\pi$ be the unique plane of $\Sigma_1$ containing $\ell$. Through $\ell$ there pass $q^3$ planes distinct from $\pi$ and $|\Sigma_1 \setminus \pi| = q^3$. Since every plane has at least a point in common with $\Sigma_1$, we have that necessarily every plane through $\ell$ distinct from $\pi$ has exactly one point in common with $\Sigma_1$. If $\ell \in \cL_5$, from the proof of Lemma \ref{lines}, $\ell$ is contained in a unique hyperbolic quadric $\cH$ of $\PG(3, q^3)$ such that $\cH \cap \Sigma_1$ is a hyperbolic quadric of $\Sigma_1$. Hence there are at least $q+1$ planes containing $\ell$ and meeting $\Sigma_1$ in $q+1$ points. On the other hand, since there are other $q^3-q$ planes through $\ell$ and $|\Sigma_1 \setminus (\Sigma_1 \cap \cH)| = q^3 - q$, every other plane through $\ell$ has to share with $\Sigma_1$ exactly one point. 
\end{proof}

Let $S$ be a Singer group of $\Sigma_1$. Then $S$ is a subgroup of order $(q+1)(q^2+1)$ of a Singer group $\bar S$ of $\PG(3, q^3)$. Let $\bar S'$ be the unique subgroup of $\bar S$ of order $(q^2-q+1)(q^4-q^2+1)$. Thus a non--trivial element of $\bar S'$ maps $\Sigma_1$ to a $q$--order subgeometry of $\PG(3, q^3)$ distinct from $\Sigma_1$. Since $\bar S$ acts regularly on the points of $\PG(3, q^3)$ and $\bar S = \langle S, \bar S' \rangle$, we have that necessarily these $(q^2-q+1)(q^4-q^2+1)$ $q$--order subgeometries so obtained are pairwise disjoint. Hence they form a partition $\cP$ of the points of $\PG(3, q^3)$ into $q$--order subgeometries. Since $S$ is a subgroup of $G$, it follows that a $q$--order subgeometry of $\cP$ and distinct from $\Sigma_1$ consists either of points of $\cO_2$ or of points of $\cO_3$. In particular there are $q^3(q-1)(q^2-1)$ members of $\cP$ consisting of points of $\cO_3$ and hence $q^4-q$ members of $\cP$ formed by points of $\cO_2$. Recall the following results.

\begin{lemma}[\cite{Bruen}]\label{bruen}
No plane of $\PG(3, q^3)$ meets two distinct members of $\cP$ in $q^2+q+1$ points. 
\end{lemma}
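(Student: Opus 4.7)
The plan is to argue by contradiction, exploiting two group-theoretic facts about the Singer groups at play. The regularity of $\bar S$ on points of $\PG(3,q^3)$ transfers, by duality, to regularity on the set of planes, so no non-identity element of $\bar S$ fixes a plane. On the other hand, $S$ is a Singer group of $\Sigma_1$ and therefore acts transitively (indeed, regularly) on the $(q+1)(q^2+1)$ planes of $\Sigma_1$. I will also use that $S \cap \bar S' = \{1\}$, which is forced by $|S| \cdot |\bar S'| = |\bar S|$ together with $\bar S = \langle S, \bar S' \rangle$.

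Suppose, for a contradiction, that some plane $\pi$ of $\PG(3,q^3)$ meets two distinct members $\Sigma_1^{\sigma_1}$ and $\Sigma_1^{\sigma_2}$ of $\cP$ in $q^2+q+1$ points each, where $\sigma_1, \sigma_2 \in \bar S'$ with $\sigma_1 \neq \sigma_2$. Replacing $\pi$ by $\pi^{\sigma_1^{-1}}$ and setting $\sigma = \sigma_2 \sigma_1^{-1} \in \bar S' \setminus \{1\}$, I may assume that $\pi$ meets both $\Sigma_1$ and $\Sigma_1^\sigma$ in $q^2+q+1$ points. Applying the collineation $\sigma^{-1}$ to the latter incidence, $\pi^{\sigma^{-1}}$ also meets $\Sigma_1$ in $q^2+q+1$ points, so $\pi^{\sigma^{-1}}$ is a second plane of $\Sigma_1$.

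Since $S$ is transitive on the planes of $\Sigma_1$, there exists $s \in S$ with $\pi^{\sigma^{-1}} = \pi^s$, whence $\pi^{\sigma s} = \pi$. The regularity of $\bar S$ on planes of $\PG(3,q^3)$ forces $\sigma s = 1$, so $\sigma = s^{-1} \in S \cap \bar S' = \{1\}$, contradicting $\sigma \neq 1$.

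The hard part is recognizing the right structural ingredients: pairing the transitivity of $S$ on the small set of planes of $\Sigma_1$ with the regularity of $\bar S$ on all planes of $\PG(3,q^3)$, through the decomposition $\bar S = S \cdot \bar S'$ with trivial intersection. Once this setup is in place, the argument reduces to one line. A purely combinatorial attempt is inadequate here: the obvious double count, which yields $n_{q+1} + (q+1)\, n_{q^2+q+1} = q^4+1$ for the number of members of $\cP$ meeting $\pi$ in $q+1$ and $q^2+q+1$ points respectively, only bounds $n_{q^2+q+1}$ by roughly $q^3$, far from the required $n_{q^2+q+1} \le 1$.
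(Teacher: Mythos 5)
Your proof is sound in its core mechanism, and it is necessarily a different route from the paper's: the paper gives no internal argument for this lemma at all, quoting it from Bruen's work on intersections of subgeometries. Your argument is self-contained and exploits exactly the Singer-theoretic provenance of $\cP$: regularity of $\bar S$ on the planes of $\PG(3,q^3)$ combined with transitivity of $S$ on the $(q+1)(q^2+1)$ planes meeting $\Sigma_1$ in a subplane, so that $\pi^{\sigma s}=\pi$ forces $\sigma s=1$ and hence $\sigma\in S$. This structural input is genuinely needed, since the statement does not follow from the members of $\cP$ being pairwise disjoint (a plane $\PG(2,q^3)$ does contain pairs of disjoint $q$--order subplanes, as an easy averaging count shows), and, as you observe, the naive double count is far too weak. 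The citation buys the paper brevity; your argument buys a short, transparent proof tailored to this particular partition.

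The one fragile point is your last line. You finish via $\sigma=s^{-1}\in S\cap\bar S'=\{1\}$, justifying the triviality of the intersection by the paper's claim $\bar S=\langle S,\bar S'\rangle$. That claim, and with it $S\cap\bar S'=\{1\}$, actually fails whenever $q\equiv 2\pmod 3$: then $3$ divides both $|S|=(q+1)(q^2+1)$ and $|\bar S'|=(q^2-q+1)(q^4-q^2+1)$, so inside the cyclic group $\bar S$ these two subgroups meet in the subgroup of order $3$ and generate a subgroup of index $3$ (for $q=2$ the orders are $15$ and $39$ inside a cyclic group of order $585$). This is an inaccuracy inherited from the paper's construction paragraph rather than a flaw in your idea, and it is harmless for your proof: once you have $\sigma=s^{-1}\in S$, conclude directly that $\Sigma_1^{\sigma}=\Sigma_1$, because $S$ stabilizes $\Sigma_1$, which contradicts the assumed distinctness of the two members of $\cP$. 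Ending this way removes any dependence on $S\cap\bar S'=\{1\}$, and the same argument then runs verbatim with $\sigma_1,\sigma_2$ ranging over all of $\bar S$, which is also the safer way to describe the partition $\cP$.
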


\begin{lemma}[\cite{Drudge}, \cite{Glynn}]
Under the action of $S$, the lines of $\cL_1$ are partitioned into $q+1$ orbits:
\begin{itemize}
\item one orbit consisting of $q^2+1$ pairwise disjoint lines;
\item $q$ orbits $\cR_1, \dots, \cR_q$ each of size $(q+1)(q^2+1)$. Through a point of $\Sigma_1$ there pass $q+1$ lines of $\cR_i$ no three in a plane of $\Sigma_1$ and a plane of $\Sigma_1$ contains $q+1$ lines of $\cR_i$ no three through a point. 
\end{itemize} 
\end{lemma}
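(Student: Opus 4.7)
The plan is to exploit the regularity of $S$ on points (and, by duality, on planes) of $\Sigma_1$ together with the cyclicity of $S$. First I would observe that if $T\le S$ is a non-trivial subgroup stabilising a line $\ell$ of $\Sigma_1$, then $T$ acts on the $q+1$ points of $\ell$ and, because $S$ is point-regular, this action is semi-regular; hence $|T|$ divides $q+1$. Since $S$ is cyclic, $T$ is forced to lie in the unique subgroup $H\le S$ of order $q+1$.

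Next I would describe the $H$-action. Realising $S$ as $\GF(q^4)^{\ast}/\GF(q)^{\ast}$ acting on $\PG(3,q)=\PG(\GF(q^4))$, the subgroup $H$ corresponds to $\GF(q^2)^{\ast}/\GF(q)^{\ast}$, and its $q^2+1$ point-orbits are exactly the projective lines arising from the $\GF(q^2)$-subspaces of $\GF(q^4)$; this is the classical Singer regular spread. Because each $H$-orbit on points has size $q+1$, any $H$-fixed line must be a single $H$-orbit, so the $H$-fixed lines are precisely the spread lines. Combined with the previous paragraph, every line outside the Singer spread has trivial $S$-stabiliser. The spread itself is an $S$-orbit of length $(q+1)(q^2+1)/(q+1)=q^2+1$, giving the first orbit of the statement, while the remaining $q(q+1)(q^2+1)$ lines split into regular $S$-orbits $\cR_1,\dots,\cR_q$, each of size $(q+1)(q^2+1)$. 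By the point- and plane-transitivity of $S$, a double count of flags yields that every $\cR_i$ contains $q+1$ lines through each point and $q+1$ lines in each plane of $\Sigma_1$.

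The sharpest claim---no three of the $q+1$ concurrent lines of $\cR_i$ at a point are coplanar, and dually no three of the $q+1$ coplanar lines in a plane are concurrent---is the delicate part of the argument and the main obstacle. I would handle it by passing to the quotient $\PG(2,q)$ at a point $P$ and showing that, using the extra symmetries coming from the normaliser of $S$ in $\mathrm{P}\Gamma\mathrm{L}(4,q)$ (which contributes a Frobenius of order $4$ of $\GF(q^4)/\GF(q)$), the $q+1$ lines of $\cR_i$ through $P$ project onto a $(q+1)$-arc of the quotient plane. Equivalently, three coplanar (respectively, concurrent) lines of $\cR_i$ through a common point (respectively, in a common plane) would force extra incidences inconsistent with the regular orbit length $(q+1)(q^2+1)$ just established. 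This final step is the technical core of the analyses of Drudge \cite{Drudge} and Glynn \cite{Glynn}, on which the paper relies.
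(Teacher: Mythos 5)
The first thing to note is that the paper does not prove this statement at all: it is quoted as a known result with citations to Drudge and Glynn, so there is no internal proof to compare with. Measured as a self-contained proof, your proposal has a genuine gap exactly at the heart of the lemma: the claim that among the $q+1$ lines of $\cR_i$ through a point no three are coplanar (equivalently, no three lie in a pencil, i.e.\ the orbit is a cap of the Klein quadric), and its dual. You explicitly defer this to Drudge and Glynn, and the one-line justification you sketch --- that three such lines ``would force extra incidences inconsistent with the regular orbit length'' --- is not an argument: regularity of the $S$-orbit fixes only the orbit size and the constant numbers $q+1$ of lines per point and per plane, and by itself says nothing about whether three of the $q+1$ lines through a point can lie in a common plane. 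That configuration is perfectly compatible with everything you have established up to that point, so ruling it out requires the genuinely technical analysis of the cited papers (this is precisely Glynn's ``maximal cap in the Klein quadric'' theorem). Citing it is legitimate --- the paper does the same --- but then your text is an orbit count plus a citation, not a proof of the lemma.

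There is also a smaller, fixable gap earlier. You show that any nontrivial subgroup $T\le S$ stabilizing a line has order dividing $q+1$, hence $T\le H$, and that the $H$-fixed lines are exactly the spread lines; but you then conclude that every non-spread line has trivial stabilizer, which does not follow as stated, since a priori a proper nontrivial subgroup $T<H$ could fix a non-spread line without $H$ doing so. The repair is one line: since $T\le H$, every $T$-orbit on points is contained in an $H$-orbit, i.e.\ in a line of the regular spread; a $T$-fixed line $\ell$ is a union of $T$-orbits of size $|T|\ge 2$, so $\ell$ would meet some spread line in at least two points and hence be a spread line. (Alternatively, note that the lifted stabilizer $\{\lambda\in\GF(q^4)^*\colon \lambda U=U\}\cup\{0\}$ of the $2$-dimensional $\GF(q)$-subspace $U$ corresponding to $\ell$ is a subfield of $\GF(q^4)$ containing $\GF(q)$, hence is $\GF(q)$ or $\GF(q^2)$.) With that patched, your orbit decomposition --- one spread orbit of length $q^2+1$ and $q$ regular orbits of length $(q+1)(q^2+1)$ --- and the double count giving $q+1$ lines of $\cR_i$ through each point and in each plane are correct; only the arc/cap property remains unproved.
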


Note that if two lines of $\Sigma_1$ meet, then necessarily their intersection point belongs to $\Sigma_1$. Let $\ell$ be a line of $\cR_1$ and let $L$ be a point of $\ell \cap \cO_2$. Set $\Sigma_2 = L^S$. We will show that there exists a $q$--order subgeometry of $\cP$ consisting of points of $\cO_3$ that together with $\Sigma_1 \cup \Sigma_2$ forms a cutting blocking set.  

\begin{prop}\label{sec}
There are exactly $(q^3+q^2+1)(q+1)(q^2+1)$ lines of $\PG(3, q^3)$ having at least a point in common with $\Sigma_1$ and $\Sigma_2$. In particular $(q+1)(q^2+1)$ of these are lines of $\Sigma_1$ and $q^2(q+1)^2(q^2+1)$ are lines of $\cL_2$ meeting both $\Sigma_1$ and $\Sigma_2$ in one point. 
\end{prop}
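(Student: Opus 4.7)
The plan is to classify lines of $\PG(3,q^3)$ meeting both $\Sigma_1$ and $\Sigma_2$ by the partition $\cL_1,\ldots,\cL_5$ of Lemma~\ref{lines}. Since $\Sigma_2\subset\cO_2$, only $\cL_1$- and $\cL_2$-lines can contribute: $\cL_3$-lines have all non-$\Sigma_1$ points in $\cO_3$, and $\cL_4\cup\cL_5$ misses $\Sigma_1$. For $\cL_1$, every $P\in\Sigma_2\subset\cO_2$ lies on a unique line of $\Sigma_1$, and writing $P=L^\sigma$ with $\sigma\in S$ this line is $\ell^\sigma\in\cR_1$. Because $|\cR_1|=|S|$ the stabilizer $S_\ell$ is trivial, so $\sigma\mapsto\ell^\sigma$ is a bijection $S\to\cR_1$; hence $P\mapsto\ell_P$ is a bijection $\Sigma_2\to\cR_1$, every line of $\cR_1$ meets $\Sigma_2$ in exactly one point, and no other line of $\cL_1$ meets $\Sigma_2$. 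This accounts for the first $(q+1)(q^2+1)$ lines.

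Next I would fix $P\in\Sigma_2$ and compute the $\cL_2$-lines through $P$. The $q+1$ planes of $\Sigma_1$ through $P$ are precisely those containing $\ell_P$; in each such plane $\pi$, $\pi_0=\pi\cap\Sigma_1$ is a subplane containing $\ell_P\cap\Sigma_1$, and the lines through $P$ in $\pi$ meeting $\pi_0$ in exactly one point are in bijection with the $q^2$ points of $\pi_0\setminus\ell_P$, giving $q^2$ such $\cL_2$-lines per plane. Since each $\cL_2$-line lies in a unique plane of $\Sigma_1$, there are $q^2(q+1)$ lines of $\cL_2$ through each $P$, yielding $q^2(q+1)^2(q^2+1)$ incident pairs $(P,m)$.

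To finish, I would show every $\cL_2$-line meets $\Sigma_2$ in exactly one point. A double count of plane-point incidences forces every plane of $\Sigma_1$ to meet $\Sigma_2$ in a $q$-subline $s_\pi$ (using Lemma~\ref{intersection} together with Lemma~\ref{bruen}); the unique line $\bar m_\pi$ of $\pi$ containing $s_\pi$ lies in $\cL_2\cup\cL_4$ (not in $\cL_1$, by the first step), and the $S$-equivariant injective assignment $\pi\mapsto\bar m_\pi$ places these $(q+1)(q^2+1)$ lines in a single $S$-orbit, entirely contained in $\cL_2$ or entirely in $\cL_4$. The main obstacle is excluding the $\cL_2$ alternative. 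For this I would exploit the order-three collineation $\iota$, which commutes with $S$, fixes $\Sigma_1$ pointwise, and sends $\Sigma_2$ to a distinct member of $\cP$ (since $\iota(L)\ne L$ while $\ell\cap\Sigma_2=\{L\}$): if $\bar m_\pi\in\cL_2$ meets $\pi_0$ at $A_\pi$, then $\bar m_\pi,\iota(\bar m_\pi),\iota^2(\bar m_\pi)$ are three distinct concurrent $\cL_2$-lines in $\pi$ through $A_\pi$, each containing a $q$-subline of one of $\Sigma_2,\iota(\Sigma_2),\iota^2(\Sigma_2)$; for each $P\in s_\pi$, the Frobenius triple $\{P,\iota(P),\iota^2(P)\}$ lies on an extended subline $\ell_P$ of $\pi_0$ avoiding $A_\pi$, producing $q+1$ extended sublines of $\pi_0$ all avoiding the single point $A_\pi$. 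I would then invoke a splash/club argument in the plane $\pi$ (in the spirit of the combinatorial bounds of Proposition~\ref{bound}) to contradict this configuration, forcing all $\bar m_\pi\in\cL_4$. Adding the two counts gives the claimed total $(q+1)(q^2+1)+q^2(q+1)^2(q^2+1)=(q^3+q^2+1)(q+1)(q^2+1)$.
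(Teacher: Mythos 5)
Your counts in the first three steps are correct and essentially reproduce the paper's argument (the paper counts the $q^2(q+1)$ lines of $\cL_2$ through the single point $L$ and then applies transitivity of $S$, which is the same bookkeeping as your per-point count), and you are right that the whole proposition reduces to showing that each relevant $\cL_2$--line meets $\Sigma_2$ exactly once, i.e.\ that the line $\bar m_\pi=\langle s_\pi\rangle$ lies in $\cL_4$ rather than $\cL_2$. The gap is that you never actually exclude the $\cL_2$ alternative. The configuration you derive from $\iota$ is not contradictory by itself: the $q+1$ lines $\ell_P$, $P\in s_\pi$, are the $q+1$ lines of $\cR_1$ in $\pi$, a dual arc of $\pi_0$, and such a dual arc always misses many points of $\pi_0$ (for instance $(q^2-q)/2$ of them), so ``$q+1$ extended sublines of $\pi_0$ all avoiding the single point $A_\pi$'' is a perfectly realizable configuration, and adding the three concurrent lines $\bar m_\pi,\iota(\bar m_\pi),\iota^2(\bar m_\pi)$ does not visibly force a contradiction either. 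Invoking ``a splash/club argument in the spirit of Proposition \ref{bound}'' is not a proof, and Proposition \ref{bound} is in any case about bounding the number of points of $\cO_3'$ on two or three lines of $\cE$, which has no bearing on this exclusion.

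What actually closes the gap is a one-line club argument inside $\pi$: each point of $s_\pi$ lies on a line of $\cR_1$ contained in $\pi$, hence on an extended subline of $\pi_0$; so if $\bar m_\pi\in\cL_2$ with $\bar m_\pi\cap\pi_0=\{A_\pi\}$, then $\bar m_\pi\cap(\cO_2'\cup\pi_0)$ is a club of $\bar m_\pi$ with head $A_\pi$ containing the $q$--order subline $s_\pi$, and $A_\pi\notin s_\pi$ because $s_\pi\subset\Sigma_2$ is disjoint from $\Sigma_1$; this contradicts the property recalled in the paper that every $q$--order subline of a club passes through its head. With that replacement your reduction (single $S$--orbit, so either all $\bar m_\pi\in\cL_2$ or all in $\cL_4$) becomes unnecessary, since the argument works plane by plane. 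For fairness, the paper's own proof also passes over this point quickly (it simply asserts that the $q^2(q+1)$ lines through $L$ meet $\Sigma_2$ in one point), so identifying the issue is to your credit; but as written your proposal replaces the missing step by an unproved appeal to an irrelevant proposition, so it does not yet constitute a proof.
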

\begin{proof}
Since $|\Sigma_2| = |L^S| = |\ell^S| = |\cR_1|$ and the unique line of $\cR_1$ through $L$ is $\ell$, necessarily $\ell$ meets $\Sigma_2$ in $L$. Hence every line of $\cR_1$ has exactly one point in common with $\Sigma_2$. Moreover no other line of $\Sigma_1$ may have a point in common with $\Sigma_2$. Therefore a line of $\PG(3, q^3)$ not belonging to $\cR_1$ and having at least a point in common with $\Sigma_1$ and $\Sigma_2$ belongs to either $\cL_2$ or $\cL_4$. Let $\pi$ be a plane of $\Sigma_1$ with $\ell \subset \pi$. Since there are exactly $q+1$ lines of $\cR_1$ contained in $\pi$ and every line of $\cR_1$ has exactly one point in common with $\Sigma_2$, we have that $|\pi \cap \Sigma_2| \ge q+1$. From Lemma~\ref{bruen}, we have that $\pi \cap \Sigma_2$ consists of $q+1$ points of a $q$--order subline, say $s$. Through the point $L$ of $s$ there pass $q^2$ lines of $\cL_2$ that are contained in $\pi$. Varying the plane $\pi$ among the $q+1$ planes of $\Sigma_1$ containing $\ell$, we have that there are $q^2(q+1)$ lines of $\cL_2$ through $L$ meeting both $\Sigma_1$ and $\Sigma_2$ in one point. Since $q^3+q^2 = |\Sigma_1 \setminus \ell|$, we have that every line through $L$ distinct from $\ell$ and intersecting $\Sigma_1$ in at least one point has exactly one point in common with $\Sigma_1$. Since $\Sigma_2 = L^S$, it follows that there are exactly $q^2(q+1)^2(q^2+1)$ lines of $\cL_2$ meeting both $\Sigma_1$ and $\Sigma_2$ in one point. 
\end{proof}

\begin{cor}\label{dis}
A line of $\Sigma_2$ is disjoint from $\Sigma_1$.
\end{cor}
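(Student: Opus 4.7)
The plan is to argue by contradiction, exploiting the very strong intersection information produced in Proposition~\ref{sec}. Let $m$ be a line of $\Sigma_2$. By definition this means $m$ is a line of $\PG(3,q^3)$ meeting the $q$--order subgeometry $\Sigma_2$ in $q+1$ points.

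Suppose, contrary to the claim, that $m \cap \Sigma_1 \neq \emptyset$. Then $m$ has at least one point in common with each of $\Sigma_1$ and $\Sigma_2$, so $m$ lies in the family of $(q^3+q^2+1)(q+1)(q^2+1)$ lines enumerated by Proposition~\ref{sec}. According to that proposition, this family splits into the $(q+1)(q^2+1)$ lines of $\cR_1$ together with the $q^2(q+1)^2(q^2+1)$ lines of $\cL_2$ that meet both $\Sigma_1$ and $\Sigma_2$ in exactly one point. Hence $m$ is either in $\cR_1$ or in $\cL_2$.

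In both cases the proof of Proposition~\ref{sec} tells us that $|m \cap \Sigma_2| = 1$: the lines of $\cR_1$ each meet $\Sigma_2$ in a single point (this was the very observation used to identify $\Sigma_2$ with $L^S$), and the $\cL_2$--lines in the second family were constructed precisely as lines meeting $\Sigma_2$ in exactly one point. This is incompatible with $|m \cap \Sigma_2| = q+1$.

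The contradiction shows $m \cap \Sigma_1 = \emptyset$, proving the corollary. No real obstacle is expected here, since the statement is essentially an immediate bookkeeping consequence of Proposition~\ref{sec}; the only thing to be mildly careful about is the distinction between ``a line of $\Sigma_2$'' (a line of $\PG(3,q^3)$ meeting $\Sigma_2$ in $q+1$ points) and ``a line meeting $\Sigma_2$'' (possibly in just one point), which is exactly the tension that drives the contradiction.
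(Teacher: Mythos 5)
Your argument is correct and is exactly the (implicit) proof the paper intends: by Proposition~\ref{sec} every line meeting both $\Sigma_1$ and $\Sigma_2$ is either a line of $\cR_1$ or one of the $\cL_2$--lines, and in either case it meets $\Sigma_2$ in exactly one point, which is incompatible with being a line of $\Sigma_2$. Nothing further is needed.
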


From Proposition \ref{sec}, the $(q+1)(q^2+1)$ lines of $\cR_1$ have $q+1$ points in common with $\Sigma_1$ and one point in common with $\Sigma_2$ and there is a subset of $\cL_2$, say $\cR$, consisting of $q^2(q+1)^2(q^2+1)$ lines meeting both $\Sigma_1$ and $\Sigma_2$ in exactly one point. Let $\pi$ be a plane of $\Sigma_1$. From the proof of Proposition \ref{sec} we have that $\pi \cap \Sigma_2$ is a $q$--order subline, say $s$. There are $q+1$ lines of $\cR_1$ contained in $\pi$ and the set $\cD$ of these $q+1$ lines consists of the extended sublines of a dual subconic of $\pi \cap \Sigma_1$; see \cite{Glynn}. Moreover there are $q^2(q+1)$ lines of $\cR$ contained in $\pi$; let $\cE$ be the set of these lines. Note that $\cD \cup \cE$ is the set of lines of $\pi$ having at least a point in common with both $s$ and $\pi \cap \Sigma_1$.      

\begin{lemma}\label{plane2}
If $r_1$ and $r_2$ are two distinct lines of $\cR_1 \cup \cR$ such that $r_1 \cap r_2 \in \cO_3$, then the plane spanned by $r_1$ and $r_2$ is a plane of $\Sigma_1$.
\end{lemma}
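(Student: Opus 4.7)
The plan is to first exclude the case that either line lies in $\cR_1$, and then use the definition of $\cR \subset \cL_2$ to force both lines into a common plane of $\Sigma_1$.

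First I would observe that every line of $\cR_1$ is a line of $\Sigma_1$, hence by Lemma~\ref{lines} it only contains points of $\Sigma_1 \cup \cO_2$ and has no point in $\cO_3$. Consequently, if $r_1 \cap r_2 \in \cO_3$, then neither $r_1$ nor $r_2$ can belong to $\cR_1$, so both lines lie in $\cR \subset \cL_2$. By the definition of $\cL_2$, each $r_i$ is contained in a unique plane $\pi_i$ of $\Sigma_1$ (for $i=1,2$).

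The main step is to argue $\pi_1 = \pi_2$ by contradiction. Assuming $\pi_1 \ne \pi_2$, the intersection $m = \pi_1 \cap \pi_2$ is a line of $\PG(3,q^3)$; since $\pi_1 \cap \Sigma_1$ and $\pi_2 \cap \Sigma_1$ are two distinct planes of the subgeometry $\Sigma_1 = \PG(3,q)$, their intersection is a line of $\Sigma_1$, and this subline sits inside $m$, forcing $m$ to be a line of $\Sigma_1$. Now the point $P = r_1 \cap r_2$ lies in $r_1 \subset \pi_1$ and also in $r_2$, so $P \in r_2 \cap \pi_1 \subseteq \pi_1 \cap \pi_2 = m$. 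Hence $P$ lies on a line of $\Sigma_1$, contradicting $P \in \cO_3$ (which by definition consists of points not lying on any line of $\Sigma_1$).

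Therefore $\pi_1 = \pi_2$, and since this common plane contains both $r_1$ and $r_2$, it must coincide with the plane they span. I do not anticipate a real obstacle here: the argument is a direct combination of the orbit descriptions of Lemma~\ref{lines} with the elementary fact that two distinct planes of a subgeometry meet in a subline.
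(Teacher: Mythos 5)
Your proof is correct and follows essentially the same route as the paper: rule out $\cR_1$ since its lines avoid $\cO_3$, place each $r_i$ in a plane of $\Sigma_1$ via $\cR \subset \cL_2$, and derive a contradiction from the fact that two distinct planes of $\Sigma_1$ meet in a line of $\Sigma_1$, which contains no point of $\cO_3$. The only cosmetic difference is that you conclude $\pi_1=\pi_2$ directly, while the paper phrases the contradiction in terms of the span $\langle r_1,r_2\rangle$ not being a plane of $\Sigma_1$; the content is identical.
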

\begin{proof}
Since no point of $\cO_3$ lies on a line of $\cR_1$, we have that $r_1, r_2 \in \cR$. Moreover $|\Sigma_1 \cap \cO_3| = |\Sigma_2 \cap \cO_3| = 0$ and hence $r_1 \cap r_2 \notin \Sigma_1 \cup \Sigma_2$. Since $r_i \in \cL_2$, $i = 1,2$, there is a plane of $\Sigma_1$, say $\pi_i$, containing $r_i$, $i = 1,2$. Assume by contradiction that $\sigma = \langle r_1, r_2 \rangle$ is not a plane of $\Sigma_1$. Then $\pi_1 \ne \pi_2$ and $r_1 \cap r_2 \in \pi_1 \cap \pi_2$, a contradiction since two planes of $\Sigma_1$ meet in a line of $\Sigma_1$, which contains no point of $\cO_3$.
\end{proof}

\begin{prop}\label{exists}
There exists a point $P \in \cO_3$ such that $P$ is not contained in a line of $\cR_1 \cup \cR$.
\end{prop}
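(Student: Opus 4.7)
The plan is to count coverings of $\cO_3$ by lines of $\cR_1\cup\cR$ plane by plane inside $\Sigma_1$ and show some point of $\cO_3$ is uncovered. First, observe that lines of $\cR_1\subseteq\cL_1$ lie in $\Sigma_1$, and by Lemma~\ref{lines} contain no point of $\cO_3$. So it suffices to produce $P\in\cO_3$ on no line of $\cR$.

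Next I would reduce the problem to a single plane of $\Sigma_1$ by establishing that each $P\in\cO_3$ lies in a \emph{unique} plane of $\Sigma_1$. Existence is immediate: by definition of $\cO_3$ the points $P,P^{\iota},P^{\iota^2}$ span a plane of $\Sigma_1$, and this plane contains $P$. Uniqueness follows from the incidence count
$$
(q+1)(q^2+1)\cdot|\cO_3'|=(q+1)(q^2+1)\cdot q^3(q-1)(q^2-1)=q^3(q^2-1)(q^4-1)=|\cO_3|,
$$
so each $P\in\cO_3$ lies in an average (and hence exactly) one plane of $\Sigma_1$. Denote that plane by $\pi_P$. Since each line of $\cR\subseteq\cL_2$ is contained in a unique plane of $\Sigma_1$, every line of $\cR$ through $P$ must lie in $\pi_P$, and there is therefore a line of $\cE_{\pi_P}$ through $P$ (with $\cE_{\pi_P}$ being the set of $q^2(q+1)$ lines of $\cR$ contained in $\pi_P$, as described before Lemma~\ref{plane2}).

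Now I would fix an arbitrary plane $\pi$ of $\Sigma_1$, write $\cE=\cE_\pi$, and let $z_i$ be the number of points of $\cO_3'\cap\pi$ lying on exactly $i$ lines of $\cE$. By Theorem~\ref{plane1} only the values $i\in\{0,1,2,3,q+1\}$ occur, with $z_{q+1}=|\cI|=q^2$. Using $|\cO_3'\cap\pi|=q^3(q-1)(q^2-1)=q^6-q^5-q^4+q^3$ and that a line of $\cE\subseteq\cL_2$ carries $q^3-q^2$ points of $\cO_3$, all in $\pi$, I get the two counts
$$
\sum_i z_i=q^6-q^5-q^4+q^3,\qquad \sum_i i\,z_i=|\cE|(q^3-q^2)=q^2(q+1)\cdot q^2(q-1)=q^6-q^4.
$$
Subtracting the first from the second and substituting $z_{q+1}=q^2$ gives
$$
z_0=z_2+2z_3+q\,z_{q+1}-(q^5-q^3)=z_2+2z_3+2q^3-q^5.
$$
Proposition~\ref{bound} yields $z_2+2z_3>q^5-2q^3$ whenever $q\ge 5$, so $z_0>0$, producing the desired $P\in\cO_3$.

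The main obstacle is organizational rather than combinatorial: namely the verification that the planar invariants $z_i$ from Section~2.2 faithfully describe coverings of $\cO_3$ by lines of $\cR$ in the ambient space. This rests on the two structural observations above — that each point of $\cO_3$ is in a unique plane of $\Sigma_1$, and that lines of $\cR$ through such a point are confined to that plane — after which the Proposition is a direct consequence of the double count and the earlier bound on $z_2+2z_3$.
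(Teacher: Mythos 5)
Your argument is correct for $q\ge 5$ and is essentially the paper's own proof with tidier bookkeeping: the paper likewise reduces to a single plane of $\Sigma_1$ (via Lemma \ref{plane2} together with the transitivity of $S$ on planes of $\Sigma_1$ and the fact that two such planes share no point of $\cO_3$, in place of your averaging count showing each point of $\cO_3$ lies in a unique plane), invokes Theorem \ref{plane1} with $z_{q+1}=q^2$, and then compares the number of points of $\cO_3$ covered by lines of $\cR$ with $|\cO_3|$ using Proposition \ref{bound}; your per-plane identity $z_0=z_2+2z_3+2q^3-q^5>0$ is the same double count carried out inside one plane instead of globally. The only point you leave open is $q\le 4$: Proposition \ref{bound} needs $q\ge 5$ (and Theorem \ref{plane1} needs $q>2$), and since the statement carries no restriction on $q$, these residual cases must be addressed; the paper settles them by a Magma computation, and your write-up should at least acknowledge this.
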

\begin{proof}
Since no point of $\cO_3$ lies on a line of $\cR_1$, we have that if a point of $\cO_3$ lies on some line of $\cR_1 \cup \cR$ then such a line belongs to $\cR$. From Lemma \ref{plane2}, if $P \in \cO_3$ and if $r_1$ and $r_2$ are two distinct lines of $\cR$ such that $P = r_1 \cap r_2$, then $r_1$ and $r_2$ are contained in a plane of $\Sigma_1$. Therefore, by Theorem \ref{plane1}, through a point of $\cO_3$, there pass $0, 1, 2, 3$ or $q+1$ lines of $\cR$. Fix a plane of $\Sigma_1$, say $\pi$, and let $z_i$ denote the number of points $Q \in \cO_3 \cap \pi$ such that there are $i$ lines of $\cR$ through $Q$, $i = 2, 3, q+1$. Since $S$ is transitive on planes of $\Sigma_1$ and two distinct planes of $\Sigma_1$ have no point of $\cO_3$ in common, we have that there are exactly $z_i(q+1)(q^2+1)$ points of $\cO_3$ such that there are $i$ lines of $\cR$ through $Q$, $i = 2,3, q+1$. In the last case, since $z_{q+1} = q^2$, the points of $\cO_3$ contained in the $q+1$ lines of $\cR$ are exactly $q^2(q+1)(q^2+1)$ and every plane of $\Sigma_1$ contains $q^2$ of these points. Since a line of $\cR$ contains $q^3-q^2$ points of $\cO_3$, it follows that the number of points of $\cO_3$ lying on at least one line of $\cR$ equals
\begin{align*}
(q^3 - q^2) |\cR| - q \cdot z_{q+1} (q+1)(q^2+1) - z_2 (q+1) (q^2+1) - 2 \cdot z_3 (q+1)(q^2+1) \\
= (q+1)(q^2+1) (q^6-q^4-q^3-z_2-2z_3),
\end{align*}
which is smaller than $|\cO_3| = (q+1)(q^2+1) (q^6-q^5-q^4+q^3)$, since $z_2 + 2z_3 > q^5 - 2q^3$, whenever $q \ge 5$, by Proposition \ref{bound}. If $q \le 4$, then some computations performed with Magma \cite{magma} confirm the statement.
\end{proof}

By Proposition \ref{exists} there is a point $P \in \cO_3$ with the property that no line of $\PG(3, q^3)$ having at least a point in common with $\Sigma_1$ and $\Sigma_2$ passes through $P$. Let $\Sigma_3 = P^S$. 

\begin{theorem}\label{Th:mainPG3q^3}
The set $\Sigma_1 \cup \Sigma_2 \cup \Sigma_3$ is a cutting blocking set of $\PG(3, q^3)$ of size $3(q+1)(q^2+1)$.
\end{theorem}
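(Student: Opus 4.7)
The plan is as follows. First I would verify that the three subgeometries $\Sigma_1,\Sigma_2,\Sigma_3$ are pairwise disjoint, so that the size of their union is $3(q+1)(q^2+1)$. This is immediate from the construction: $\Sigma_1 \subset \Sigma_1$, while $\Sigma_2 = L^S \subset \cO_2$ (since $L \in \cO_2$ and $S\leq G$ preserves the three $G$-orbits on points) and $\Sigma_3 = P^S \subset \cO_3$; and the three sets $\Sigma_1,\cO_2,\cO_3$ are pairwise disjoint. Since $\bar{S}$ acts regularly on $\PG(3,q^3)$, each of the three $S$-orbits has size $(q+1)(q^2+1)$.

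To establish the cutting property, I would let $\pi$ be an arbitrary plane of $\PG(3,q^3)$. By Lemma~\ref{intersection}, each intersection $\pi\cap\Sigma_i$ is nonempty, so I can pick $P_i \in \pi\cap\Sigma_i$ for $i=1,2,3$. By pairwise disjointness of the three subgeometries the three points $P_1,P_2,P_3$ are distinct, and they all lie in $\pi\cap(\Sigma_1\cup\Sigma_2\cup\Sigma_3)$. It will then suffice to verify that they are non-collinear, for in that case they automatically span $\pi$.

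The decisive step is the non-collinearity. Consider the line $\ell=P_1P_2$. Since $\ell$ meets both $\Sigma_1$ and $\Sigma_2$, Proposition~\ref{sec} forces $\ell \in \cR_1\cup\cR$: either $\ell$ is a line of $\Sigma_1$, in which case it belongs to $\cR_1$ (the unique $S$-orbit of lines of $\cL_1$ meeting $\Sigma_2$), or $\ell\in\cL_2$, in which case it lies in $\cR$. Both $\cR_1$ and $\cR$ are $S$-invariant, because $S$ stabilizes both $\Sigma_1$ and $\Sigma_2$. The point $P$ produced in Proposition~\ref{exists} lies on no line of $\cR_1\cup\cR$, and this property propagates to every point of $\Sigma_3 = P^S$: if some $P^\sigma$ were incident with a line $\ell'\in\cR_1\cup\cR$, then $P$ would be incident with $(\ell')^{\sigma^{-1}}\in\cR_1\cup\cR$, a contradiction. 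Hence $P_3\notin\ell$, the three points are non-collinear, and $\pi\cap(\Sigma_1\cup\Sigma_2\cup\Sigma_3)$ spans $\pi$.

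There is no genuine obstacle at this stage: essentially all of the work has been absorbed by Propositions~\ref{sec} and~\ref{exists}, which were engineered precisely so that $\Sigma_3$ carries no point collinear with any pair from $\Sigma_1\times\Sigma_2$. Combined with the fact that every plane hits every $q$-order subgeometry (Lemma~\ref{intersection}), this produces three non-collinear points in each planar section in one line.
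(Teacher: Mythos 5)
Your proof is correct and follows essentially the same route as the paper: both rest on the fact that every line meeting $\Sigma_1$ and $\Sigma_2$ lies in $\cR_1 \cup \cR$ (Proposition \ref{sec}), that the $S$--invariance of $\cR_1 \cup \cR$ transfers the property of $P$ from Proposition \ref{exists} to all of $\Sigma_3 = P^S$, and that every plane meets each $\Sigma_i$ (Lemma \ref{intersection}). The only difference is cosmetic: you select three points directly and show non-collinearity, whereas the paper argues by contradiction with a plane whose intersection would lie on a line.
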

\begin{proof}
First observe that no line of $\PG(3, q^3)$ has at least a point in common with each of the subgeometries $\Sigma_1$, $\Sigma_2$ and $\Sigma_3$. Assume on the contrary that $\ell$ is a line of $\cR_1 \cup \cR$ having a point $R$ in common with $\Sigma_3$. Since $\Sigma_3 = P^S$, there is a projectivity $\gamma$ of $S$ mapping $R$ to $P$ and hence $\ell^\gamma$ is a line of $\cR_1 \cup \cR$ containing $P$, contradicting the fact that no line of $\cR_1 \cup \cR$ passes through $P$. 

Let $\Pi$ be a plane of $\PG(3, q^3)$. Then $|\Pi \cap \Sigma_i| \ge 1$ and hence $|\Pi \cap (\Sigma_1 \cup \Sigma_2 \cup \Sigma_3)| \ge 3$. Suppose by contradiction that $\Sigma_1 \cup \Sigma_2 \cup \Sigma_3$ is not a cutting blocking set of $\PG(3, q^3)$. Hence there is a plane $\Pi$ of $\PG(3, q^3)$ such that the points of $\Pi \cap (\Sigma_1 \cup \Sigma_2 \cup \Sigma_3)$ are on a line, say $r$. 
It follows that $|\Pi \cap (\Sigma_1 \cup \Sigma_2 \cup \Sigma_3)| \ge 3$ and $r$ is a line of $\PG(3, q^3)$ having at least one point in common with each of the subgeometries $\Sigma_1$, $\Sigma_2$ and $\Sigma_3$; a contradiction.
\end{proof}

\begin{prop}
The cutting blocking set $\Sigma_1 \cup \Sigma_2 \cup \Sigma_3$ of $\PG(3, q^3)$ is minimal.
\end{prop}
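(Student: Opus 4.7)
The plan is to show that for every $X \in \cX := \Sigma_1 \cup \Sigma_2 \cup \Sigma_3$ one can exhibit a plane $\Pi$ of $\PG(3, q^3)$ through $X$ such that $(\Pi \cap \cX) \setminus \{X\}$ lies on a line of $\Pi$; this is equivalent to the claimed minimality. Since the Singer group $S$ stabilizes each $\Sigma_i$ and acts transitively on it, and therefore stabilizes $\cX$ setwise, it will suffice to produce such a plane for a single representative $X_i \in \Sigma_i$, for each $i = 1,2,3$.

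For each $X_i$, I will look for a plane $\Pi_i$ through $X_i$ that meets each of $\Sigma_1, \Sigma_2, \Sigma_3$ in exactly one point (necessarily in $X_i$ itself when $j=i$). By Theorem \ref{Th:mainPG3q^3} the three points in $\Pi_i \cap \cX$ span $\Pi_i$, and deleting $X_i$ leaves two points on the line joining them, which does not span $\Pi_i$ -- this is precisely the defect that witnesses the minimality of $\cX$ at $X_i$.

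The existence of $\Pi_i$ will follow from an incidence count. Combining Lemma \ref{intersection} with the facts that a point of $\cO_3(\Sigma)$, of $\cO_2(\Sigma)$, or of $\Sigma$ lies on exactly $1$, $q+1$, or $q^2+q+1$ planes of any $q$--order subgeometry $\Sigma$ of $\PG(3,q^3)$, respectively, a short calculation shows that the number of planes through a fixed point $X$ meeting $\Sigma$ in at least $q+1$ points is
$$
\begin{cases}
(q^2+q+1)(q^3-q+1)=q^5+q^4+1 & \text{if } X \in \Sigma, \\
q^4+q^3+1 & \text{if } X \in \cO_2(\Sigma), \\
q^4+q^3+q^2+1 & \text{if } X \in \cO_3(\Sigma).
\end{cases}
$$
Applying these bounds to the three subgeometries for $X=X_i$ and using the $\cO_3$ value as a uniform upper bound whenever $X_i \notin \Sigma_j$, the number of ``bad'' planes through $X_i$ (i.e.\ those that meet some $\Sigma_j$ in more than one point) is at most $(q^5+q^4+1)+2(q^4+q^3+q^2+1)=q^5+3q^4+2q^3+2q^2+3$, which is strictly less than the total $q^6+q^3+1$ of planes through $X_i$ as soon as $q \ge 3$. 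Hence a good plane $\Pi_i$ exists for every $q \ge 3$.

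The main obstacle I anticipate is the small case $q=2$, where the counting estimate above is not decisive; in this regime I expect the existence of the required planes to be verified directly, with the aid of a computer algebra system as in Proposition \ref{exists}.
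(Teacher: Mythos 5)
Your proof is correct, and it reaches the statement by a counting route that differs from the paper's in a meaningful way. You fix a representative point $X_i\in\Sigma_i$ and count, through that \emph{point}, the planes meeting some $\Sigma_j$ in more than one point, using the incidence numbers (a point of $\Sigma$, of $\cO_2$, of $\cO_3$ lies on $q^2+q+1$, $q+1$, $1$ planes of $\Sigma$) together with Lemma \ref{intersection}; I checked your table, and the values $q^5+q^4+1$, $q^4+q^3+1$, $q^4+q^3+q^2+1$ are all correct, so the comparison with the $q^6+q^3+1$ planes through a point indeed yields a good plane for $q\ge 3$, with $q=2$ left to machine verification. The paper instead fixes a line $r\in\cR$, which already meets $\Sigma_1$ and $\Sigma_2$ in one point each and misses $\Sigma_3$, and counts the $q^3+1$ planes \emph{through $r$} using Lemma \ref{planes}: at most $(q^2+1)+(q^2+q+1)+(q+1)$ of them meet some $\Sigma_j$ in more than one point, so a good plane exists for $q\ge 3$, and one construction simultaneously witnesses minimality at a point of each of the three subgeometries. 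The paper's version buys smaller numbers and reuses machinery already established (the line set $\cR$ from Proposition \ref{sec} and Lemma \ref{planes}); yours is more self-contained, needing only the orbit incidence facts and Lemma \ref{intersection}, at the cost of writing out the "short calculation" behind your table (which you should include, since it is the heart of the bound). Two cosmetic remarks: the appeal to Theorem \ref{Th:mainPG3q^3} to ensure the three points span $\Pi_i$ is unnecessary, since after deleting $X_i$ the two remaining points lie on a line regardless; and both arguments end at the same threshold $q\ge 3$, with $q=2$ handled computationally exactly as you anticipate.
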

\begin{proof}
Since $S$ acts transitively on $\Sigma_i$, it is enough to prove that through a point of $\Sigma_i$, there is a plane of $\PG(3, q^3)$ intersecting each of the three relevant subgeometries in exactly one point. Let $r$ be a line of $\cR$. Then $|r \cap \Sigma_1| = |r \cap \Sigma_2| = 1$ and $|r \cap \Sigma_3| = 0$. Since $r \in \cL_2$, from Lemma~\ref{planes}, there are $q^2+1$ planes through $r$ intersecting $\Sigma_1$ in at least $q+1$ points. Similarly, from Lemma \ref{planes}, we deduce that there are at most $q^2+q+1$ planes through $r$ having in common at least $q+1$ points with $\Sigma_2$ and at most $q+1$ planes through $r$ intersecting $\Sigma_3$ in at least $q+1$ points. Taking into account Lemma \ref{intersection}, we have that there are at least $q^3+1 - (q^2+1) - (q^2+q+1) - (q+1) = q^3 - 2 (q^2+q+1)$ planes through $r$ having exactly one point in common with each of the subgeometries $\Sigma_1$, $\Sigma_2$ and $\Sigma_3$. Hence if $q \ge 3$, we are done. If $q = 2$, Magma computations \cite{magma} show the assertion.           
\end{proof}

\begin{prop}
To the set $\Sigma_1 \cup \Sigma_2 \cup \Sigma_3$ there corresponds a $[3(q+1)(q^2+1), 4]_{q^3}$ reduced minimal linear code with possible weights $3q^3+3q^2+3q$, $3q^3+3q^2+2q$, $3q^3+3q^2+q$, $3q^3+3q^2$, $3q^3+2q^2+q$ and $3q^3+2q^2$.
\end{prop}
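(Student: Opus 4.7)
The length and dimension are immediate. Since $\Sigma_1, \Sigma_2, \Sigma_3$ are distinct members of the partition $\cP$ they are pairwise disjoint, so the associated projective system has exactly $3(q+1)(q^2+1)$ points; it spans $\PG(3, q^3)$ because $\Sigma_1 = \PG(3,q)$ already does. The correspondence recalled in the introduction, combined with the preceding proposition (minimality of $\Sigma_1 \cup \Sigma_2 \cup \Sigma_3$) and \cite[Theorem 3.4]{ABN} (equivalently \cite[Theorem 14]{Tang}), therefore yields a reduced minimal $[3(q+1)(q^2+1), 4]_{q^3}$ linear code.

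The codeword associated with a plane $\Pi$ has weight $n - \sum_{i=1}^{3} |\Pi\cap\Sigma_i|$ with $n = 3(q+1)(q^2+1)$. By Lemma \ref{intersection} each $|\Pi\cap\Sigma_i|$ lies in $\{1, q+1, q^2+q+1\}$, and by Lemma \ref{bruen} at most one of these three values equals $q^2+q+1$. The six unordered triples
\[
(1,1,1),\ (1,1,q+1),\ (1,q+1,q+1),\ (q+1,q+1,q+1),\ (q^2+q+1,q+1,1),\ (q^2+q+1,q+1,q+1)
\]
give intersection sums $3,\ q+3,\ 2q+3,\ 3q+3,\ q^2+2q+3,\ q^2+3q+3$, producing exactly the six listed weights. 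The enumeration thus reduces to excluding the remaining triple $(q^2+q+1,1,1)$, which would yield the non-listed weight $3q^3+2q^2+2q$.

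When $\Pi$ is a plane of $\Sigma_1$ the exclusion is direct. Every plane of $\Sigma_1$ contains $q+1$ lines of $\cR_1$, each meeting $\Sigma_2$ in one point, so by Proposition \ref{sec} we have $|\Pi\cap\Sigma_2|=q+1$. Moreover every point of $\cO_3$ lies in a unique plane of $\Sigma_1$; since $|\Sigma_3|=(q+1)(q^2+1)$ equals the number of planes of $\Sigma_1$ and $S$ acts transitively on those planes, each such plane carries exactly one point of $\Sigma_3$, so $|\Pi\cap\Sigma_3|=1$ and the configuration is $(q^2+q+1,q+1,1)$.

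The main obstacle is the analogous statement for planes of $\Sigma_2$ and of $\Sigma_3$. Here $S$-transitivity on the planes of each $\Sigma_i$ forces $|\Pi\cap\Sigma_j|$ to be a constant $a_{ij}$ depending only on the ordered pair $(i,j)$; a double count gives $a_{ij}=u_{ji}$, where $u_{ji}$ denotes the number of planes of $\Sigma_i$ through a point of $\Sigma_j$. This quantity equals $q+1$ if $\Sigma_j$ lies entirely in $\cO_2$ with respect to $\Sigma_i$ (the point is then on a unique line of $\Sigma_i$, contained in $q+1$ planes of $\Sigma_i$) and equals $1$ if $\Sigma_j$ lies entirely in $\cO_3$ with respect to $\Sigma_i$ (in which case the point belongs to a single plane of $\Sigma_i$). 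The plan is to transfer the ruled structures $\cR_1$ and $\cR$ of Section \ref{sub} through the $\bar S'$-action which permutes the members of $\cP$ and normalises $S$, yielding companion ruled linesets between every ordered pair of subgeometries in $\cP$, and then to combine this with the constraints imposed on the point $P$ in Proposition \ref{exists} to show that for each $i\in\{2,3\}$ at least one of the other two subgeometries sits in $\cO_2$ with respect to $\Sigma_i$. Once this reciprocity is in place, at least one of the two relevant intersections equals $q+1$, the triple $(q^2+q+1,1,1)$ is excluded, and the weight list is complete.
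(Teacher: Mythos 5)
The length, dimension and the reduced--minimality part of your argument are fine and agree with the paper (the correspondence of \cite[Theorem 3.4]{ABN}/\cite[Theorem 14]{Tang} plus the preceding minimality proposition). The problem is the weight enumeration, where your proof is explicitly unfinished: you correctly reduce everything to excluding the configuration in which a plane meets one $\Sigma_i$ in a subplane and each of the other two subgeometries in a single point, you settle this for planes of $\Sigma_1$, but for planes of $\Sigma_2$ and $\Sigma_3$ you only announce a ``plan'' (transferring the ruled structures $\cR_1$, $\cR$ through the $\bar S'$--action and invoking the constraints of Proposition \ref{exists}) without carrying it out. A small arithmetic slip aside (the excluded sum is $q^2+q+1+1+1=q^2+3$, so the forbidden weight would be $3q^3+2q^2+3q$, not $3q^3+2q^2+2q$), this missing step is exactly the decisive one, so the proposal does not prove the stated weight list.

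Moreover, the route you sketch for closing it is doubtful. The relation ``lies in $\cO_2$ with respect to'' between members of $\cP$ is not symmetric: by construction $\Sigma_2\subset\cO_2$ relative to $\Sigma_1$, yet Corollary \ref{dis} says precisely that no point of $\Sigma_1$ lies on a line of $\Sigma_2$, i.e.\ $\Sigma_1$ is in the $\cO_3$--position relative to $\Sigma_2$. Hence, by your own double count, every plane of $\Sigma_2$ meets $\Sigma_1$ in exactly one point, and excluding the bad triple for such planes requires showing that every point of $\Sigma_3$ lies on a line of $\Sigma_2$; nothing in Proposition \ref{exists} (which only constrains $P$ with respect to the lines of $\cR_1\cup\cR$, all of which meet $\Sigma_1$) bears on this, and an element of $\bar S'$ mapping $\Sigma_1$ to $\Sigma_2$ maps $\Sigma_2$ to some member of $\cP$ which need not be $\Sigma_3$, so the transferred ruled lineset is not the one you need. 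The same difficulty recurs for planes of $\Sigma_3$. For comparison, the paper's proof does not attempt this finer case distinction at all: it simply reads the six intersection numbers $3$, $q+3$, $2q+3$, $3q+3$, $q^2+2q+3$, $q^2+3q+3$ off Lemma \ref{intersection} and Lemma \ref{bruen}. So your reduction pinpoints a genuine subtlety, but as submitted the key exclusion is neither proved nor provable by the argument you outline.
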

\begin{proof}
It is enough to observe that, from Lemma \ref{intersection} and Lemma \ref{bruen}, a plane of $\PG(3, q^3)$ can intersect the set $\Sigma_1 \cup \Sigma_2 \cup \Sigma_3$ in $3$, $q+3$, $2q+3$, $3q+3$, $q^2+2q+3$ or $q^2+3q+3$ points. 
\end{proof}

\section{Cutting blocking sets from lines in hyggledy--piggledy arrangement}\label{hp}

In $\PG(3, q)$ let $\ell_1,\ell_2, \ell_3$ three pairwise skew lines. There are $q+1$ lines meeting $\ell_i$, $1 \le i \le 3$, in one point. These lines form a {\em regulus}, say $\cR$ and are contained in a hyperbolic quadric $\cQ^+(3, q)$.  Let $\cR^o$ denote the opposite regulus of $\cR$. It follows that a set of lines of $\PG(3,q)$ in hyggledy--piggledy arrangement has to contain at least four elements. Let $r$ be a line external to $\cQ^+(3,q)$. Then the set $\cB$ consisting of the $4(q+1)$ points of $\ell_1 \cup \ell_2 \cup \ell_3 \cup r$ forms a cutting blocking set, see for instance \cite[Example 9]{FS}, \cite[Theorem 3.7]{DGMP}. 

\begin{prop}\label{solid}
In $\PG(3, q)$, $q > 2$, the cutting blocking set $\cB$ is minimal.
\end{prop}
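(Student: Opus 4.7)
The plan is to prove minimality by exhibiting, for each $P \in \cB$, a plane $\Pi$ through $P$ such that $(\cB \cap \Pi) \setminus \{P\}$ is contained in a single line; this shows that $\cB \setminus \{P\}$ fails to span $\Pi$ and hence is not a cutting blocking set. Since $\ell_1, \ell_2, \ell_3$ play symmetric roles in the construction of $\cB$, it suffices to treat the two cases $P \in \ell_1$ and $P \in r$.

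For $P \in \ell_1$, I would parametrise candidate planes by the points $R \in r$. For each such $R$, let $m_R$ denote the unique line through $R$ meeting both $\ell_2$ and $\ell_3$ (the intersection of the two planes $\langle R, \ell_2 \rangle$ and $\langle R, \ell_3 \rangle$), and set $\Pi_R = \langle P, m_R \rangle$. The crucial observation is that $m_R$ cannot meet $\ell_1$: otherwise $m_R$ would be a transversal to all three lines of $\cR$ and hence a line of $\cR^o \subset \cQ^+(3,q)$, forcing $R \in \cQ^+(3,q) \cap r$ and contradicting that $r$ is external. Thus $\ell_1$ and $m_R$ are skew, $\ell_1 \cap \Pi_R = \{P\}$, and $P \notin m_R$. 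In the generic situation $\cB \cap \Pi_R = \{P, Q_2, Q_3, R\}$, where $Q_i = m_R \cap \ell_i$, with the last three points collinear on $m_R$. The three degenerate cases to avoid are $\ell_2 \subset \Pi_R$, $\ell_3 \subset \Pi_R$, and $r \subset \Pi_R$; each of them pins $\Pi_R$ to one specific plane (namely $\langle P, \ell_2 \rangle$, $\langle P, \ell_3 \rangle$, or $\langle P, r \rangle$ respectively), and hence forces $R$ to be a specific point of $r$, namely the intersection of $r$ with that plane, which is a single point because $r$ is not contained in any of the three planes. So at most three values of $R$ are excluded, and since $|r| = q+1 \geq 4$ when $q > 2$, a good $R$ exists.

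For $P \in r$ the argument is dual: parametrise candidate planes by $m \in \cR^o$ and set $\Pi_m = \langle P, m \rangle$. Since $m \subset \cQ^+(3,q)$ while $P$ lies on the external line $r$, we have $P \notin m$ and the lines $m, r$ are skew, so $r \cap \Pi_m = \{P\}$. The bad configurations $\ell_i \subset \Pi_m$ each force $\Pi_m = \langle P, \ell_i \rangle$ and pin $m$ to be the unique line of $\cR^o$ contained in that plane (at most one, because the lines of $\cR^o$ are pairwise skew). Thus again at most three bad $m$'s arise, and $|\cR^o| = q+1 \geq 4$ suffices. The main obstacle in the whole argument is the careful bookkeeping of these degenerate events — in particular, in Case 1, the ``$r \subset \Pi_R$'' case, where the bad $R$ must be identified inside the plane $\langle P, r \rangle$ as the intersection of $r$ with the line joining $\ell_2 \cap \langle P, r \rangle$ and $\ell_3 \cap \langle P, r \rangle$. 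Once this bookkeeping is in place, the counting $3 < q+1$ immediately finishes the proof for $q > 2$.
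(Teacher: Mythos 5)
Your proof is correct, but it follows a genuinely different route from the paper's. The paper, for $P \in r$, spans the plane with a line $s$ of the regulus containing $\ell_1,\ell_2,\ell_3$ (other than these three), so that $\langle P, s\rangle$ is a tangent plane of $\cQ^+(3,q)$ meeting it in $s \cup s'$ with $s'$ a transversal line, and the residual points of $\cB$ automatically lie on $s'$ — no degenerate choices need to be excluded; for $P \in \ell_i$ it then repeats the same argument after replacing $\cQ^+(3,q)$ by the hyperbolic quadric through $\ell_j, \ell_k, r$ (implicitly using that $\ell_i$ is external to that quadric, which holds because a common point would produce a transversal to $\ell_1,\ell_2,\ell_3$ meeting the external line $r$). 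You instead always span the plane with a transversal line: a transversal to $\ell_1,\ell_2,\ell_3$ when $P \in r$, and the transversal $m_R$ to $\ell_2,\ell_3$ through a point $R \in r$ when $P \in \ell_1$; the price is the bookkeeping of at most three degenerate choices out of $q+1$, which is exactly where $q>2$ enters. Your version is more elementary — the quadric is used only to see that $m_R$ misses $\ell_1$ and that $r$, being external, is skew to every transversal — and it avoids the auxiliary quadric altogether, whereas the paper's version needs no exclusion count. Two cosmetic points: your labels are swapped relative to the paper's convention (there $\cR$ denotes the $q+1$ transversals and $\cR^o$ the regulus containing $\ell_1,\ell_2,\ell_3$), and your blanket phrase that the bad $R$ is ``the intersection of $r$ with that plane'' does not apply verbatim to the case $r \subset \Pi_R$, since that plane contains $r$; but you repair this yourself by identifying the bad $R$ there as the intersection of $r$ with the line joining $\ell_2\cap\langle P,r\rangle$ and $\ell_3\cap\langle P,r\rangle$, so the count of at most three bad choices, and hence the proof, stands.
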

\begin{proof}
Let $P \in r$ and let $\sigma$ be the plane spanned by $P$ and a line $s$ of $\cR^o \setminus \{\ell_1, \ell_2, \ell_3\}$. Then $\sigma$ meets the hyperbolic quadric $\cQ^+(3, q)$ in two lines, one of which is $s$ and the other one, say $s'$, belongs to $\cR$. Note that $\ell_i \cap \sigma \in s'$, $i = 1,2,3$, and hence $\langle (\cB \setminus \{P\}) \cap \sigma \rangle = s' \ne \sigma$. If $P \in \ell_i$, let $\cQ$ be the hyperbolic quadric obtained by considering the regulus of $\PG(3, q)$ containing the lines $\ell_j, \ell_k$ and $r$, where $\{i, j, k\}$ is a permutation of $\{1,2,3\}$. By repeating the same argument, interchanging $\cQ^+(3, q)$ with $\cQ$, $r$ with $\ell_i$ and $\ell_1, \ell_2, \ell_3$ with $\ell_j, \ell_k, r$, we have that $\cB \setminus \{P\}$ is not a cutting blocking set.  
\end{proof}

\begin{prop}
The code associated with the minimal cutting blocking set of Proposition \ref{solid} is a $[4(q+1), 4]_q$ reduced minimal linear code with weights $3q$ and $4q$ and weight distribution $A_{3q} = 4(q^2-1)$ and $A_{4q} = (q^2-3)(q^2-1)$.
\end{prop}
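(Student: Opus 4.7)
The plan is to exploit the standard correspondence between the code and the projective system $\cB$: each nonzero codeword corresponds (up to scalar) to a plane $\Pi$ of $\PG(3,q)$, and the weight of this codeword equals $|\cB|-|\Pi\cap\cB|=4(q+1)-|\Pi\cap\cB|$. Since $\cB$ spans $\PG(3,q)$ (being a cutting blocking set), the associated code has length $4(q+1)$ and dimension $4$. Minimality of $\cB$ (Proposition \ref{solid}), combined with the equivalence between reduced minimal $[n,r+1]_q$ codes and minimal cutting blocking sets of $\PG(r,q)$ of size $n$ (\cite[Theorem 3.4]{ABN}, \cite[Theorem 14]{Tang}), immediately yields the fact that the code is reduced minimal. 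So only the weight distribution requires work.

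The first step is to observe that the four lines $\ell_1,\ell_2,\ell_3,r$ are pairwise skew: the $\ell_i$ form a regulus on $\cQ^+(3,q)$ and are pairwise skew by construction, while $r$ is external to $\cQ^+(3,q)$ and hence disjoint from each $\ell_i$. Consequently no plane of $\PG(3,q)$ can contain two of these four lines. For any plane $\Pi$ and any line $m\in\{\ell_1,\ell_2,\ell_3,r\}$, $|\Pi\cap m|$ equals $q+1$ or $1$ according as $m\subset\Pi$ or not. Therefore
$$
|\Pi\cap\cB|\in\{4,\,q+4\},
$$
according to whether $\Pi$ contains zero or exactly one of the four lines, and the only possible nonzero weights are $4q$ and $3q$.

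The final step is to count planes of each type. Every line of $\PG(3,q)$ is contained in exactly $q+1$ planes, and since no two of the four lines are coplanar the $4(q+1)$ planes meeting $\cB$ in $q+4$ points are all distinct. The total number of planes of $\PG(3,q)$ is $(q+1)(q^2+1)$, so the number of planes containing none of the four lines equals $(q+1)(q^2+1)-4(q+1)=(q+1)(q^2-3)$. Since each plane corresponds to exactly $q-1$ nonzero codewords (its scalar multiples), this gives
$$
A_{3q}=4(q+1)(q-1)=4(q^2-1),\qquad A_{4q}=(q+1)(q-1)(q^2-3)=(q^2-1)(q^2-3).
$$
A quick sanity check $A_{3q}+A_{4q}=(q^2-1)(q^2+1)=q^4-1$ confirms that all nonzero codewords are accounted for. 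The argument is essentially a bookkeeping exercise; the only geometric ingredient of substance is the pairwise-skewness of the four lines, which is immediate from the hypotheses on $r$ and the $\ell_i$, so no real obstacle arises.
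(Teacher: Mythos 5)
Your proof is correct and follows essentially the same route as the paper: count the $4(q+1)$ planes containing one of the four pairwise skew lines (meeting $\cB$ in $q+4$ points) versus the remaining $(q^2-3)(q+1)$ planes (meeting $\cB$ in $4$ points), and convert to weights via the projective-system correspondence. Your write-up just makes explicit the bookkeeping (pairwise skewness, the factor $q-1$ per hyperplane, and the appeal to the cutting-blocking-set/minimal-code equivalence for reducedness) that the paper leaves implicit.
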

\begin{proof}
It is enough to observe that there are exactly $4(q+1)$ planes containing one of the four relevant lines and hence meeting $\cB$ in $q+4$ points and that the remaining $(q^2-3)(q+1)$ planes meet $\cB$ in four points.
\end{proof}

In $\PG(r, q)$, let $\cC = \{(1,t, \dots, t^{r-1}, t^r) \;\; | \;\; t \in \GF(q)\} \cup \{(0, 0, \dots, 0, 1)\}$ be the {\em normal rational curve} of $\PG(r, q)$. Then $\cC$ consists of $q+1$ points of $\PG(r, q)$ no $k+2$ of which in a $k$--space of $\PG(r, q)$. Also, for each point $P$ of $\cC$ there is a distinguished line $t_P$ passing through $P$, that is the {\em tangent to $\cC$ at $P$}, where $t_P = \langle P, P' \rangle$, $P' = (0, 1, 2t, \dots, (n-1)t^{n-2}, nt^{n-1})$ if $P \ne (0, 0, \dots, 0, 1)$, and $P' = (0, 0, \dots, 1, 0)$ if $P = (0, 0, \dots, 0, 1)$. Moreover no two  tangent lines to $\cC$ have a point in common (cf. \cite[Lemma 6.31]{HT}). For further properties of the normal rational curve we refer the reader to \cite[Section 6.5]{HT}. The following result has been proved in \cite[Theorem 20]{FS}.

\begin{theorem}[\cite{FS}]
If $p > r$ and $q > 2r - 1$, then arbitrary $2r-1$ distinct tangent lines to $\cC$ constitute a set of lines of $\PG(r, q)$ in higgledy--piggledy arrangement.
\end{theorem}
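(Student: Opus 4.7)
The plan is to assume for contradiction that some hyperplane $\Pi_{\mathbf{a}}$ of $\PG(r,q)$ has the property that its intersection with the union of the $2r-1$ tangent lines lies in a proper subspace of $\Pi_{\mathbf{a}}$, that is, in an intersection $\Pi_{\mathbf{a}}\cap\Pi_{\mathbf{b}}$ with some hyperplane $\Pi_{\mathbf{b}}\ne\Pi_{\mathbf{a}}$. The crucial identity will be a Wronskian-type polynomial whose degree is strictly smaller than the naive bound $2r-1$, forcing a contradiction between a zero count and this degree bound.

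I begin with the parameterization $P(t)=(1,t,t^2,\ldots,t^r)$ and $P'(t)=(0,1,2t,\ldots,rt^{r-1})$, so each hyperplane $\Pi_{\mathbf{a}}=\{\mathbf{a}\cdot X=0\}$ is encoded by the polynomial $f_{\mathbf{a}}(t):=\mathbf{a}\cdot P(t)$ of degree at most $r$. The tangent $t_{P(t_0)}$ is contained in $\Pi_{\mathbf{a}}$ precisely when $t_0$ is a double root of $f_{\mathbf{a}}$; otherwise $t_{P(t_0)}\cap\Pi_{\mathbf{a}}$ is the point represented by $f_{\mathbf{a}}'(t_0)P(t_0)-f_{\mathbf{a}}(t_0)P'(t_0)$. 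Since $\PGL(2,q)$ acts on $\PG(r,q)$ stabilizing $\cC$ and is $3$-transitive on $\cC$, and $|\cC|=q+1>2r-1$, I may apply a projectivity of $\cC$ so that all $2r-1$ chosen tangent points have finite parameters $t_1,\ldots,t_{2r-1}\in \GF(q)$.

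Next, consider
$$
W(t):=f_{\mathbf{a}}(t)\,f_{\mathbf{b}}'(t)-f_{\mathbf{a}}'(t)\,f_{\mathbf{b}}(t).
$$
For each $i$, either (Case~1) $t_{P(t_i)}\subset\Pi_{\mathbf{a}}$, which forces also $t_{P(t_i)}\subset\Pi_{\mathbf{b}}$, so that both $f_{\mathbf{a}}$ and $f_{\mathbf{b}}$ have a double root at $t_i$ and a direct local calculation, writing $f_{\mathbf{a}}=(t-t_i)^{m}u$, $f_{\mathbf{b}}=(t-t_i)^{n}v$ with $m,n\ge 2$, shows that $W$ vanishes at $t_i$ with multiplicity at least $m+n-1\ge 3$; or (Case~2) $t_{P(t_i)}\not\subset\Pi_{\mathbf{a}}$, and the requirement that the intersection point lie in $\Pi_{\mathbf{b}}$ translates into $W(t_i)=0$. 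If $k$ is the number of tangents in Case~1, then $W$ has at least $3k+(2r-1-k)=2k+2r-1$ zeros counted with multiplicity. On the other hand, the $t^{2r-1}$-coefficients of $f_{\mathbf{a}}\,f_{\mathbf{b}}'$ and $f_{\mathbf{a}}'\,f_{\mathbf{b}}$ both equal $r\,a_r b_r$ and therefore cancel, so $\deg W\le 2r-2<2k+2r-1$. This forces $W\equiv 0$.

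Finally, $W\equiv 0$ is equivalent to $(f_{\mathbf{b}}/f_{\mathbf{a}})'=0$ in $\GF(q)(t)$. Writing this ratio in lowest terms as $u/v$ with $\deg u,\deg v\le r<p$, the standard fact that a rational function with zero derivative lies in $\GF(q)(t^p)$ forces $u$ and $v$ to be constants, so $f_{\mathbf{b}}=c\,f_{\mathbf{a}}$ and hence $\Pi_{\mathbf{a}}=\Pi_{\mathbf{b}}$, contradicting our assumption. The most delicate points I anticipate are the two numerical facts used above, namely the cancellation giving $\deg W\le 2r-2$ and the multiplicity-$3$ estimate at common double roots; the hypothesis $p>r$ is used precisely to rule out any nontrivial $t^p$-dependence in the low-degree ratio $f_{\mathbf{b}}/f_{\mathbf{a}}$, and the hypothesis $q>2r-1$ both provides enough room on $\cC$ to make the reduction to finite parameters and ensures the existence of $2r-1$ distinct tangents in the first place.
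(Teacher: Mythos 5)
This statement is quoted by the paper from \cite{FS} without an internal proof, so there is nothing in the paper itself to compare against; your argument is correct and follows essentially the same route as the cited source: encode hyperplanes by polynomials $f_{\mathbf a}$ of degree at most $r$, observe that the relevant intersection conditions force the Wronskian $W=f_{\mathbf a}f_{\mathbf b}'-f_{\mathbf a}'f_{\mathbf b}$ (of degree at most $2r-2$ after the leading-term cancellation) to vanish at the $2r-1$ distinct tangency parameters, conclude $W\equiv 0$, and use $p>r$ to get $f_{\mathbf b}$ proportional to $f_{\mathbf a}$, a contradiction. The only remark worth making is that the multiplicity-$3$ estimate in your Case~1 is not needed: the $2r-1$ distinct simple zeros already exceed the degree bound $2r-2$, and in Case~1 one has $W(t_i)=0$ trivially because $f_{\mathbf a}(t_i)=f_{\mathbf a}'(t_i)=0$.
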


\begin{remark}
From the construction of Fancsali and Sziklai, there arises a cutting blocking set of $\PG(r, q)$ of size $(2r-1)(q+1)$. However the cutting blocking set so obtained is in general not minimal. For instance in $\PG(4, 11)$ with the aid of Magma \cite{magma} it is possible to see that suitably selecting six  tangent lines to $\cC$, a minimal cutting blocking set of $\PG(4, 11)$ is obtained.  
\end{remark}

\begin{prob}
Determine the minimum number of lines tangent to a normal rational curve in $\PG(r, q)$ such that the set of points covered by these lines forms a minimal cutting blocking set.
\end{prob}

Assume that $p >r$ and that $q > 2r-1$. Let $\bar\cS$ be the set of points covered by $2r-1$ arbitrarily chosen tangent lines  to $\cC$. Let $\cS$ be a pointset such that $\cS \subseteq \bar\cS$ and $\cS$ is a minimal cutting blocking set. 

\begin{prop}\label{prop:iper-normal-curve}
A hyperplane of $\PG(r, q)$ contains at most $r-2$ lines that are tangent to $\cC$.
\end{prop}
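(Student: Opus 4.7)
The plan is to translate the incidence of tangent lines with $H$ into a statement about multiple roots of a single polynomial, and then bound those multiple roots by the degree. First I would write $H:\sum_{i=0}^r a_iX_{i+1}=0$ and associate the polynomial
\[
f(t)=\sum_{i=0}^r a_i t^i\in\GF(q)[t].
\]
For $t\in\GF(q)$, the affine point $P_t=(1,t,\dots,t^r)\in\cC$ lies in $H$ iff $f(t)=0$, while the auxiliary point $P_t'=(0,1,2t,\dots,rt^{r-1})$ used to define the tangent lies in $H$ iff $f'(t)=0$. Consequently $t_{P_t}\subset H$ precisely when $t$ is a $\GF(q)$-root of $f$ of multiplicity at least $2$; the hypothesis $p>r\ge\deg f$ guarantees that $f$ cannot be a $p$-th power, so $f'$ genuinely detects multiplicities. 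For the point at infinity an inspection of the coordinates of $P_\infty=(0,\dots,0,1)$ and of $(0,\dots,1,0)$ shows that $t_{P_\infty}\subset H$ iff $a_r=a_{r-1}=0$, i.e.\ iff $\deg f\le r-2$.

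Denote by $k_0$ the number of affine tangent lines contained in $H$ and by $\varepsilon\in\{0,1\}$ the indicator of whether $t_{P_\infty}\subset H$. The $k_0$ corresponding parameters are distinct $\GF(q)$-roots of $f$, each of multiplicity at least $2$, so summing multiplicities gives $2k_0\le\deg f$. If $\varepsilon=0$ then $\deg f\le r$, hence $k_0\le\lfloor r/2\rfloor$; if $\varepsilon=1$ then $\deg f\le r-2$, hence $k_0\le\lfloor(r-2)/2\rfloor$ and the total $k_0+1$ is again at most $\lfloor r/2\rfloor$. In either case $H$ contains at most $\lfloor r/2\rfloor$ tangent lines to $\cC$, and since $\lfloor r/2\rfloor\le r-2$ for every $r\ge 3$, the stated bound follows.

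The computation itself is routine; the only conceptual point to keep straight is the equivalence between the simultaneous vanishing of $f$ and $f'$ at a $\GF(q)$-point and that point being a genuine multiple root of $f$. This is precisely where the characteristic assumption $p>r$, already built into the surrounding Fancsali--Sziklai setup, is used. Everything else is a degree count, so I do not anticipate a real obstacle beyond being careful with the case distinction at infinity.
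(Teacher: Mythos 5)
Your proof is correct, and it takes a genuinely different route from the paper. The paper argues by induction on $r$: the base case $r=3$ follows from the fact that no two tangent lines to $\cC$ meet, and the inductive step projects from a point $P\in\cC\cap H$ with $t_P\subset H$ onto a hyperplane $\Gamma\not\ni P$, turning the remaining tangent lines in $H$ into tangent lines of a normal rational curve of $\Gamma$ lying in the $(r-2)$--space $H\cap\Gamma$, which contradicts the inductive hypothesis. You instead encode the hyperplane by the polynomial $f(t)=\sum_i a_it^i$ of degree at most $r$, observe that $t_{P_t}\subset H$ exactly when $t$ is a multiple root of $f$ (with the tangent at $(0,\dots,0,1)$ handled by the condition $\deg f\le r-2$), and conclude by counting multiplicities; note that the implication $f(t)=f'(t)=0\Rightarrow (X-t)^2\mid f$ is characteristic--free, so the remark about $p$-th powers is not even needed. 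Your argument is self-contained (no induction, no projection lemma) and in fact yields the sharper classical bound of $\lfloor r/2\rfloor$ tangent lines per hyperplane, which is strictly better than $r-2$ for $r\ge 5$ and would, if propagated, also improve the minimum-distance estimate $d\ge |\cS|-(r-2)q-2r+1$ in the subsequent proposition; the paper's inductive argument, on the other hand, is coordinate-free and reuses the projection technique already present in that section.
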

\begin{proof}
By induction on $r$. Let $r = 3$. Since no two lines tangent to $\cC$ have a point in common, a plane of $\PG(3, q)$ contains at most one tangent line to $\cC$. Assume that the result holds true for $r-1$. Let $\cC$ be a normal rational curve of the projective space $\PG(r, q)$ equipped with homogeneous projective coordinates $X_1,\dots,X_{r+1}$. Suppose by contradiction that a hyperplane $H$ of $\PG(r, q)$ contains $r-1$ lines tangent to $\cC$. Denote by $P$ a point of $\cC \cap H$, where $t_P$, the line tangent to $\cC$ at $P$, is contained in $H$. Let $\Gamma$ be a hyperplane of $\PG(r, q)$ such that $P \notin \Gamma$. From Lemma \cite[Theorem 6.30]{HT}, we may assume that $P = (0, 0, \dots, 0, 1)$. By projecting the $q$ points of $\cC \setminus \{P\}$ and the line $t_{P}$  from $P$ onto $\Gamma: X_{r+1} = 0$, we obtain the normal rational curve $\cC' = \{(1,t, \dots, t^{r-1}, 0) \;\; | \;\; t \in \GF(q)\} \cup \{(0, 0, \dots, 1, 0)\}$ of $\Gamma$. On the other hand, by projecting from $P$ onto $\Gamma$ the line tangent to $\cC$ at a point $R$, $R \ne P$, we get the line $\langle \bar{R}, \bar{R}' \rangle$ that is tangent to $\cC'$ at $\bar{R}$, where $\bar{R} = \left( 1,t, \dots, t^{r-1}, 0 \right)$ and $\bar{R}' = \left(0, 1, \dots, (r-1)t^{r-2}, 0 \right)$. Observe that by projecting the $r-1$ tangent lines to $\cC$ contained in $H$ we get $r-2$ lines that are tangent to $\cC'$ and are contained in the $(r-2)$--space $H \cap \Gamma \subset \Gamma$, a contradiction. 
\end{proof}

\begin{prop}
Let $p >r$ and $q > 2r-1$. The code associated with the minimal cutting blocking set $\cS$ is a $[|\cS|, r+1]_q$ reduced minimal linear code, where $|\cS| \le (2r-1)(q+1)$ and minimum distance $d \ge |\cS| - (r-2)q-2r+1$.
\end{prop}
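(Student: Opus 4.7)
The plan is to translate the projective/geometric data into code parameters via the standard correspondence (recalled in the introduction) between non--degenerate $[n,r+1,d]_q$ linear codes and projective $[n,r+1,d]_q$ systems, and then read off the minimum distance from an upper bound on the maximum number of points of $\cS$ lying in a single hyperplane.

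First I would observe that, since $\cS$ is a minimal cutting blocking set of $\PG(r,q)$, the set $\cS$ spans $\PG(r,q)$ (otherwise it would lie in some hyperplane and could not span it in the required sense), so it defines a projective $[|\cS|,r+1,d]_q$ system for some $d$. The fact that the associated linear code is reduced and minimal is then precisely the content of \cite[Theorem 3.4]{ABN} and \cite[Theorem 14]{Tang}, already cited in the introduction. The bound $|\cS| \le (2r-1)(q+1)$ is immediate from $\cS \subseteq \bar\cS$ together with the fact that the $2r-1$ tangent lines to $\cC$ are pairwise disjoint (cf.\ \cite[Lemma 6.31]{HT}), so $|\bar\cS| = (2r-1)(q+1)$.

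The crux is the minimum distance bound. By the correspondence, $d = |\cS| - \max_{H}|H \cap \cS|$, where $H$ ranges over hyperplanes of $\PG(r,q)$. Since $\cS \subseteq \bar\cS$, it suffices to bound $|H \cap \bar\cS|$. Fix a hyperplane $H$. By Proposition~\ref{prop:iper-normal-curve}, at most $r-2$ of the $2r-1$ chosen tangent lines to $\cC$ lie in $H$, contributing at most $(r-2)(q+1)$ points of $\bar\cS$ to $H \cap \bar\cS$. Each of the remaining $(2r-1)-(r-2) = r+1$ tangent lines is not contained in $H$, and therefore meets $H$ in at most one point. Hence
\begin{equation*}
|H \cap \cS| \;\le\; |H \cap \bar\cS| \;\le\; (r-2)(q+1) + (r+1) \;=\; (r-2)q + 2r - 1,
\end{equation*}
and consequently $d \ge |\cS| - (r-2)q - 2r + 1$, as claimed.

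The only non--routine ingredient is the bound on tangent lines contained in a hyperplane, but this is exactly Proposition~\ref{prop:iper-normal-curve}, already established. Everything else is a direct count, so I do not expect any real obstacle; the main care is simply to keep track of the case where fewer than $r-2$ tangents lie in $H$ (the bound still holds, since each ``missing'' tangent is replaced by a line meeting $H$ in at most one point, whose contribution $\le 1$ is dominated by $q+1$).
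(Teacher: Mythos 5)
Your proof is correct and follows exactly the route the paper intends: its own proof is the single line ``follows from Proposition \ref{prop:iper-normal-curve}'', and your expansion — at most $r-2$ of the pairwise disjoint tangent lines lie in a hyperplane $H$, contributing at most $(r-2)(q+1)$ points, while each of the remaining $r+1$ lines meets $H$ in at most one point, giving $|H\cap\cS|\le (r-2)q+2r-1$ and hence the stated bound on $d$ — together with the cited equivalence between minimal cutting blocking sets and reduced minimal codes, is precisely the intended argument.
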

\begin{proof}
The statement follows from Proposition \ref{prop:iper-normal-curve}.
\end{proof}

\subsection{Lines of $\PG(5, q)$ in higgledy--piggledy arrangement}

Let $\PG(2, q^2)$ be the Desarguesian projective plane. Its underlying vector space $V(3, q^2)$  can be considered as a $6$--dimensional vector space $V(6, q)$ via the inclusion $\GF(q) \subset \GF(q^2)$. Each point in $\PG(2, q^2)$ corresponds to a $1$--dimensional vector subspace in $V(3, q^2)$ which in turn corresponds to a $2$--dimensional vector subspace in $V(6, q)$, i.e., a line of $\PG(5, q)$. Extending this map from (subsets of) points of $\PG(2, q^2)$ to subsets of points of $\PG(5, q)$ we obtain a map $\phi: \PG(2, q^2) \rightarrow \PG(5, q)$, called {\em field reduction map}.

The set $\cD=\{\phi(P) \;|\; P \in \PG(2, q^2)\}$ is a Desarguesian line--spread of $\PG(5, q)$. The incidence structure whose points are the elements of $\cD$ and whose lines are the solids of $\PG(5, q)$ joining two distinct elements of $\cD$, is isomorphic to $\PG(2, q^2)$. A degenerate Hermitian curve of rank $2$ of $\PG(2, q^2)$ is a cone having as vertex a point and as base a Baer subline. In what follows we will refer to a degenerate Hermitian curve of rank $2$ as a {\em degenerate Hermitian curve}. Let $\Pi$ be a solid of $\PG(5, q)$ and let $\phi^{-1}(\Pi) = \{P \in \PG(2, q^2) \; : \; |\phi(P) \cap \Pi| \ge 1\}$. Then either $\Pi$ contains $q^2+1$ lines of $\cD$ and $\phi^{-1}(\Pi)$ is a line of $\PG(2, q^2)$ or there is exactly one line $\ell$ of $\cD$ in $\Pi$ and $\phi^{-1}(\Pi)$ is a degenerate Hermitian curve $\cH$ of $\PG(2, q^2)$. Note that to such a degenerate Hermitian curve there correspond $q+1$ solids of $\PG(5, q)$, say $\Pi_i$, $1 \le i \le q+1$, such that $\Pi_i \cap \Pi_j = \ell$, if $i \ne j$, and $\bigcup_{i= 1}^{q+1} \Pi_i = \bigcup \{\phi(P) \;\; | \;\; P \in \cH\}$. 

\begin{lemma}\label{preliminary}
Let $\cP$ be a set of points of $\PG(2, q^2)$ not on a line of $\PG(2, q^2)$ and not on a degenerate Hermitian curve of $\PG(2, q^2)$. Then $\cL = \{\phi(P) \;\; | \;\; P \in \cP\}$ is a set of lines of $\PG(5, q)$ in higgledy--piggledy position.  
\end{lemma}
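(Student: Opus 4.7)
The plan is to argue by contradiction. I will suppose that $\cL$ fails to be in higgledy--piggledy arrangement, i.e., that the point set $\cX = \bigcup_{\ell \in \cL} \ell$ is not a cutting blocking set of $\PG(5,q)$. By definition of cutting blocking set, this gives a hyperplane $H$ of $\PG(5,q)$ such that $H \cap \cX$ spans only a proper subspace of $H$; equivalently, there exists a solid $S \subsetneq H$ with $H \cap \cX \subseteq S$.

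The first key step will be to show that, under this assumption, every line $\ell \in \cL$ must meet $S$. Since $\ell$ has (projective) dimension $1$ and $H$ has dimension $4$ inside $\PG(5,q)$, the intersection $\ell \cap H$ is always nonempty (by the usual Grassmann dimension count). Moreover, by construction $\ell \cap H \subseteq H \cap \cX \subseteq S$, hence
\[
\emptyset \neq \ell \cap H \subseteq \ell \cap S,
\]
so $\ell$ does meet $S$. Consequently every spread element $\phi(P)$, for $P \in \cP$, meets $S$, which is precisely the condition $\cP \subseteq \phi^{-1}(S)$.

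At this point I will invoke the dichotomy recalled in the paragraph immediately preceding the lemma: for any solid $S$ of $\PG(5,q)$, the set $\phi^{-1}(S) = \{P \in \PG(2,q^2) : |\phi(P) \cap S| \ge 1\}$ is either a line of $\PG(2,q^2)$ (when $S$ contains $q^2+1$ spread elements) or a degenerate Hermitian curve of $\PG(2,q^2)$ (when $S$ contains exactly one spread element). In either case the containment $\cP \subseteq \phi^{-1}(S)$ contradicts the hypothesis that $\cP$ lies on no line and on no degenerate Hermitian curve of $\PG(2,q^2)$, completing the proof.

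No step should be a serious obstacle: the whole argument boils down to reformulating the failure of the cutting property as the existence of an auxiliary solid $S$ meeting every element of $\cL$, and then quoting the structural description of $\phi^{-1}(S)$ already established. The only subtlety worth double--checking is the innocuous dimension count that guarantees $\ell \cap H \neq \emptyset$, which is what forces $\ell$ to meet $S$ and drives the contradiction.
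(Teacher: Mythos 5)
Your proposal is correct and follows essentially the same route as the paper: the paper's proof also argues by contradiction, passing from the failure of the cutting property to a solid $\Pi$ meeting every line of $\cL$, and then concluding $\cP \subseteq \phi^{-1}(\Pi)$, contradicting the hypothesis via the line/degenerate-Hermitian-curve dichotomy. Your version merely spells out the (correct) dimension-count justification that the paper leaves implicit.
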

\begin{proof}
Assume by contradiction that the lines of $\cL$ are not in higgledy--piggledy position. Then there would exist a solid $\Pi$ of $\PG(5, q)$ meeting every line of $\cL$ and $\cP$ would be contained in $\phi^{-1}(\Pi)$, a contradiction. 
\end{proof}

In $\PG(2, q^2)$, consider the set $\bar \cP$ consisting of the following four points: 
$$
P_1 = (1, 0, 0),\ P_2 = (0, 1, 0),\ P_3 = (0, 0, 1),\ P_4 = (1, 1, 1).
$$
There is a unique Baer subplane of $\PG(2, q^2)$ containing the four points of $\bar \cP$, namely the canonical Baer subplane $\pi$. 
\begin{lemma}
There are exactly $q^3+4q^2+1$ degenerate Hermitian curve containing the four points of $\bar \cP$.
\end{lemma}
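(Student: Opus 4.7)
The plan is to count the degenerate Hermitian curves through $\bar\cP$ by partitioning based on the location of the vertex $V$, using the fact that such a curve is the union of $q+1$ lines through a unique vertex $V$ whose trace in the pencil at $V$ is a Baer subline of $\PG(1, q^2)$. Two standard facts will be invoked throughout: three distinct points of $\PG(1, q^2)$ lie on a unique Baer subline, and two distinct points lie on exactly $q+1$ Baer sublines (a count of $|\PGL(2, q^2)|/|\PGL(2, q)|$ followed by double counting).

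I split into four cases for $V$: (a) $V = P_i$ for some $i$; (b) $V$ lies on exactly one side $P_iP_j$ of the complete quadrilateral on $\bar\cP$, with $V$ neither a $P_k$ nor a diagonal point; (c) $V$ is one of the three diagonal points; (d) $V$ lies off all six sides and is not a $P_i$. In cases (a) and (b), by general position the lines from $V$ through the four $P_k$'s collapse to exactly three distinct lines, which uniquely determine a Baer subpencil; these contribute $4$ and $6(q^2 - 2)$ curves respectively (each of the $6$ sides contains $q^2 - 1$ non-$P_i$ points, of which one is a diagonal point). In case (c), only two distinct lines through $V$ cover all four $P_i$'s, and the second fact above provides $q+1$ admissible Baer subpencils per diagonal point, giving $3(q+1)$ curves.

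The main work is case (d). Here the four lines $VP_1, \dots, VP_4$ are distinct, and they lie in a common Baer subpencil precisely when the cross-ratio $\lambda = (VP_1, VP_2; VP_3, VP_4)$ belongs to $\GF(q) \setminus \{0, 1\}$. Invoking Steiner's projective characterization of conics, for each $\lambda \in \GF(q^2) \setminus \{0, 1, \infty\}$ the locus of such $V$ is a non-degenerate conic of $\PG(2, q^2)$ through $P_1, \dots, P_4$; together with the three degenerate conics (the pairs of opposite sides, corresponding to $\lambda \in \{0, 1, \infty\}$) these exhaust the pencil of conics through $\bar\cP$. A non-degenerate conic meets each side in at most two points, and it already meets each side in the two $P_i$'s on it, so its remaining $q^2 - 3$ points are legitimate vertices in case (d); since the conics in the pencil meet pairwise only in $\bar\cP$, no vertex is counted twice. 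Letting $\lambda$ range over the $q-2$ values in $\GF(q) \setminus \{0, 1\}$ yields $(q-2)(q^2 - 3)$ curves.

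Summing the four contributions gives
\[
4 + 6(q^2 - 2) + 3(q+1) + (q-2)(q^2-3) = q^3 + 4q^2 + 1,
\]
as claimed. The principal obstacle is case (d): one must translate the Baer-subline condition on the four lines into a cross-ratio condition, then invoke the Steiner description of the pencil of conics through $\bar\cP$ while carefully isolating the $q-2$ non-degenerate conics whose parameter lies in $\GF(q) \setminus \{0, 1\}$, so as to avoid double-counting or miscounting via the degenerate members of the pencil.
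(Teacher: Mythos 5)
Your count is correct, and it follows a genuinely different route from the paper. The paper classifies the curves $\cH \supset \bar\cP$ according to whether the vertex lies in the unique Baer subplane $\pi$ through $\bar\cP$ and then analyses the intersection $\cH \cap \pi$ (either $\pi$ itself, two Baer lines, or a Baer conic, the last case handled via the result of Donati--Durante on intersections of Hermitian curves with conics to place the vertex on one of $q-2$ non-degenerate conics); this yields the totals $q^2+q+1$, $3q$, $6(q^2-q)$, $(q-2)(q^2-q)$. You instead stratify by the position of the vertex relative to the complete quadrangle (a base point, a point of exactly one side, a diagonal point, or off all sides) and work entirely in the pencil of lines through $V$, using the facts that three points of $\PG(1,q^2)$ lie on a unique Baer subline and two points on exactly $q+1$ of them, together with the cross-ratio criterion for four concurrent lines to lie in a Baer subpencil and the Chasles--Steiner description of the pencil of conics through $\bar\cP$ (the locus for fixed $\lambda$ is the member $\lambda\,\ell_{14}\ell_{23}-\ell_{13}\ell_{24}=0$, non-degenerate exactly when $\lambda\neq 0,1,\infty$). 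The two decompositions are consistent -- for instance your vertices in $\pi$ contribute $4+3(q+1)+6(q-2)+(q-2)(q-3)=q^2+4q+1$, matching the paper's first two families -- and both sum to $q^3+4q^2+1$. What your argument buys is a more elementary, self-contained count that avoids the subplane analysis and the external conic-intersection result; what the paper's argument buys is the explicit structural classification of the curves into the four families (with equations and their intersections with $\pi$) that is actually used later in the construction, information your count does not produce.
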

\begin{proof}
Let $\cH$ be a degenerate Hermitian curve containing the four points of $\bar \cP$ and let $V$ be its vertex. Assume first that $V \in \pi$. If $V \not\in \{(0, 1, 1), (1, 0, 1), (1, 1, 0)\}$, then at least three of the lines of $\cH$ meet $\pi$ in a Baer subline. In this case  the $q+1$ lines of $\cH$ have all $q+1$ points in common with $\pi$. Hence $\pi \subset \cH$ and $\cH$ is uniquely determined by $V$. If $V \in \{(0, 1, 1), (1, 0, 1), (1, 1, 0)\}$, then either $\pi \subset \cH$ and $\cH$ is uniquely determined or $\cH$ intersects $\pi$ in the two lines through $V$ whose union contains the four points of $\bar \cP$ and there are $3q$ choices for $\cH$. Assume now that $V \notin \pi$. In this case $\pi \not\subset \cH$ and $\cH \cap \pi$ is a quadric. Since the quadric $\cH \cap \pi$ contains at least four points no three on a line, we have that either $\cH \cap \pi$ consists of two intersecting Baer sublines of $\pi$ containing the four points of $\pi$ or $\cH \cap \pi$ is a non--degenerate Baer conic $\bar{C} \subset \pi$. In the former case $V$ lies on a line secant to $\bar \cP$ and for a fixed $V$ there is a unique $\cH$. Hence there are $6(q^2-q)$ degenerate Hermitian curves of this type. In the latter case, from \cite[Corollary 6.2]{DD}, we have that $V \in \cC$, where $\cC$ is the unique non--degenerate conic of $\PG(2, q^2)$ such that $\cC \cap \pi = \bar{\cC}$, and for a fixed $V$ there is a unique $\cH$. Since there are $q-2$ of such non--degenerate conics, it follows that there are $(q-2)(q^2-q)$ degenerate Hermitian curves of this type.    
\end{proof}

\noindent The degenerate Hermitian curves of the previous lemma are listed and described below.
\begin{itemize}
    \item $q^2+q+1$ degenerate Hermitian curves of type
\begin{align}
 & X_1^qX_2-X_1X_2^q = 0, {\nonumber} \\
 & a (X_1^qX_2-X_1X_2^q) - (X_1^qX_3-X_1X_3^q) = 0, \; a \in \GF(q), \label{type1} \\ 
 & b (X_1^qX_2-X_1X_2^q) - a (X_1^qX_3-X_1X_3^q) + X_2^qX_3-X_2X_3^q = 0, \; a, b \in \GF(q). {\nonumber} 
\end{align} They contain $\pi$ and their vertices lie in $\pi$.
\item $3q$ degenerate Hermitian curves of type
\begin{align}
& X_1^qX_3 - \alpha X_1X_3^q - X_2^qX_3 + \alpha X_2X_3^q = 0, {\nonumber} \\
& X_1^qX_2 - \alpha X_1X_2^q + \alpha X_2^qX_3 - X_2X_3^q = 0, \label{type2} \\
& X_1^qX_2 - \alpha X_1X_2^q - X_1^qX_3 + \alpha X_1X_3^q = 0, \; \alpha \in \GF(q^2) \setminus \{1\}, \; \alpha^{q+1} = 1. {\nonumber} 
\end{align} They meet $\pi$ in $2q+1$ points and their vertices belong to $\{(0, 1, 1), (1, 0, 1), (1, 1, 0)\}$.
\item $6(q^2-q)$ degenerate Hermitian curves of type
\begin{align}
& (1 - \alpha^q)\alpha X_1^qX_2 - (1 - \alpha) \alpha^q X_1X_2^q - (1 - \alpha^q) X_1^qX_3 + (1 - \alpha) X_1X_3^q = 0, {\nonumber} \\
& (1 - \alpha)\alpha^q X_1^qX_2 - (1 - \alpha^q) \alpha X_1X_2^q + (1 - \alpha^q) X_2^qX_3 - (1 - \alpha) X_2X_3^q = 0, {\nonumber} \\
& (1 - \alpha)\alpha^q X_1^qX_3 - (1 - \alpha^q) \alpha X_1X_3^q - (1 - \alpha) X_2^qX_3 + (1 - \alpha^q) X_2X_3^q = 0, \label{type3} \\
& \alpha^{q+1} (X_1^qX_2 - X_1X_2^q) - \alpha^q X_1^qX_3 + \alpha X_1X_3^q + \alpha^q X_2^qX_3 - \alpha X_2X_3^q = 0, {\nonumber} \\
& \alpha^q X_1^qX_2 - \alpha X_1X_2^q - \alpha^{q+1} (X_1^qX_3 - X_1X_3^q) + \alpha X_2^qX_3 - \alpha^q X_2X_3^q = 0, {\nonumber} \\
& \alpha X_1^qX_2 - \alpha^q X_1X_2^q - \alpha X_1^qX_3 + \alpha^q X_1X_3^q + \alpha^{q+1} (X_2^qX_3 - X_2X_3^q) = 0, \; \alpha \in \GF(q^2) \setminus \GF(q). {\nonumber}
\end{align}
 They meet $\pi$ in $2q+1$ points and their vertices are not in $\pi$ and on some line secant to $\bar \cP$.
 \item $(q-2)(q^2-q)$ degenerate Hermitian curves of type
\begin{align}
& (1 - \delta) t^q X_1^qX_2 - (1 - \delta) t X_1X_2^q - (1 - \delta t) t^q X_1^qX_3 {\nonumber}\\
&\qquad + (1 - \delta t^q) t X_1X_3^q + (1 - \delta t) X_2^qX_3 - (1 - \delta t^q) X_2X_3^q = 0, {\nonumber} \\
& \; \delta \in \GF(q) \setminus \{0, 1\}, t \in \GF(q^2) \setminus \GF(q). \label{type4}
\end{align}
 They meet $\pi$ in $q+1$ points and their vertices are not in $\pi$ and on $q-2$ non--degenerate conics of $\PG(2, q^2)$.
\end{itemize}

\begin{lemma}\label{six}
There exists at least a degenerate Hermitian curve of $\PG(2, q^2)$ passing through six points.
\end{lemma}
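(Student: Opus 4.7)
The plan is to recast the problem as a linear-algebraic question about Hermitian matrices, then extract a rank-$2$ element from a cubic hypersurface over $\GF(q)$.

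First I would set up the space of parameters. The $3 \times 3$ Hermitian matrices with entries in $\GF(q^2)$ form a $\GF(q)$-vector space $\cM$ of dimension $9$, and non-zero elements of $\cM$ modulo $\GF(q)^\ast$ correspond bijectively to Hermitian curves of $\PG(2, q^2)$, with the rank of the matrix matching the rank of the associated curve. For any point $P \in \PG(2, q^2)$, the condition ``$P$ lies on the curve defined by $H$'' is a single $\GF(q)$-linear equation on $H$, since the value of a Hermitian form at a point is a priori an element of $\GF(q)$.

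Next, given the six points $P_1, \dots, P_6$, the Hermitian matrices vanishing at all of them form a $\GF(q)$-subspace $W \subseteq \cM$ of dimension at least $9 - 6 = 3$, so $\PG(W)$ contains a projective plane over $\GF(q)$. The rank-$\leq 2$ condition $\det H = 0$ is a cubic form on $\cM$, and its restriction to $W$ cuts out a cubic hypersurface $\cZ_W$ of dimension $\geq 1$ in $\PG(W)$. Using the classical fact that a plane cubic over $\GF(q)$ always carries a $\GF(q)$-rational point (Hasse--Weil in the smooth case; rational lines or singular points in the reducible or singular irreducible cases), $\cZ_W$ is non-empty and yields a non-zero $H^\ast \in W$ with $\det H^\ast = 0$.

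The main obstacle is ensuring that the rank of $H^\ast$ is exactly $2$ and not $1$, since rank-$1$ Hermitian matrices represent (repeated) lines and not degenerate Hermitian curves of rank $2$. The rank-$1$ locus in $\PG(\cM) = \PG(8, q)$ is a $4$-dimensional subvariety, so for a generic $3$-dimensional $W$ its intersection with $\PG(W)$ has expected dimension $-2$ and is empty, in which case every $\GF(q)$-rational $H^\ast \in \cZ_W$ is automatically of rank $2$. To finish, I would show that even in the exceptional configurations of $P_1, \dots, P_6$ in which rank-$1$ matrices do appear in $W$ (for instance when several $P_i$ are collinear), the rank-$1$ locus is a proper subvariety of $\cZ_W$, so a $\GF(q)$-rational point of $\cZ_W$ outside it still exists and produces the desired degenerate Hermitian curve of rank $2$ through all six points.
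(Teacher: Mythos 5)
Your linear-algebraic setup is sound: each point does impose one $\GF(q)$-linear condition on the $9$-dimensional space of Hermitian matrices, so $\dim_{\GF(q)} W \ge 3$, and a rank-$2$ element of $W$ is exactly what the lemma asks for. The gap is the pivotal claim that the plane cubic $\det|_W=0$ always has a $\GF(q)$-rational point. This is false for cubic forms in three variables: the norm form of $\GF(q^3)/\GF(q)$ is anisotropic, and the corresponding cubic splits over $\GF(q^3)$ into three conjugate, non-concurrent lines; it is not absolutely irreducible (so Hasse--Weil does not apply), its components are not individually rational, and its three singular points form a single Galois orbit, so none of the cases in your list covers it. Moreover this is not a vacuous worry in the present situation: there do exist $3$-dimensional $\GF(q)$-subspaces of the $3\times 3$ Hermitian matrices whose nonzero elements are all nonsingular --- identify $\GF(q^2)^3$ with $\GF(q^6)$ and take the Hermitian forms $x \mapsto \mathrm{Tr}_{\GF(q^6)/\GF(q^2)}(a\,x^{q^3+1})$ with $a \in \GF(q^3)$ --- and on such a subspace the determinant restricts to an anisotropic cubic. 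So your argument must additionally show that a subspace $W$ cut out by six point-evaluation functionals can never be of this anisotropic type, and that is essentially the content of the lemma itself; note that Chevalley--Warning disposes of the case $\dim W \ge 4$, so the entire difficulty sits precisely in the generic case $\dim W = 3$, where your cited generalities break down.

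The rank-$1$ issue you flag at the end is, by contrast, minor and fixable without any genericity talk: a rank-$1$ Hermitian form vanishes exactly on one line of $\PG(2,q^2)$, so $W$ contains rank-$1$ elements only when the six points are collinear, a configuration one can handle directly by exhibiting a rank-$2$ cone containing that line. For comparison, the paper avoids all of this by a short explicit construction: either the six points lie on at most three concurrent lines, or four of them form a frame whose unique Baer subplane $\pi$ may be taken canonical, and then a cone of extended lines of $\pi$ through a suitable vertex of $\pi$ (a curve of type \eqref{type1}) swallows the remaining two points; in particular no point-existence result for cubics is needed.
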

\begin{proof}
Consider a set of six points of $\PG(2, q^2)$. If there were not four points out of the six points no three on a line, then these six points would lie on at most three concurrent lines and hence on at least a degenerate Hermitian curve of $\PG(2, q^2)$. Thus we may assume that there are four points no three of them on a line and, by the action of $\PGL(3, q^2)$, that these four points are those of $\bar \cP$. Let $P$ and $Q$ be the remaining two points. If at least one of the points $P$ and $Q$ lies in $\pi$, then there will be at least one degenerate Hermitian curve of type \eqref{type1} containing the six points. Assume that $P, Q \notin \pi$ and let $\ell_P$ and $\ell_Q$ be the lines of $\PG(2, q^2)$ containing $P$ and $Q$, respectively, and meeting the Baer subplane $\pi$ in $q+1$ points. Then $|\ell_P \cap \ell_Q \cap \pi| \ge 1$. In this case the degenerate Hermitian curve of type \eqref{type1} having as vertex a point of $\ell_P \cap \ell_Q$ and containing $\ell_P$ and $\ell_Q$ will contain the six points. 
\end{proof}
\begin{cor}
A set of points of $\PG(2, q^2)$ not contained in a degenerate Hermitian curve of $\PG(2, q^2)$ has at least seven points.
\end{cor}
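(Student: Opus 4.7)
The statement follows as an immediate contrapositive of Lemma \ref{six}. The plan is to observe that Lemma \ref{six} guarantees that for any six points of $\PG(2, q^2)$ there is at least one degenerate Hermitian curve of $\PG(2, q^2)$ passing through all of them. Therefore any pointset of cardinality at most six is contained in some degenerate Hermitian curve.

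The corollary is then just the contrapositive: if a pointset $\cP \subset \PG(2, q^2)$ is not contained in any degenerate Hermitian curve, then necessarily $|\cP| \geq 7$. No additional machinery is required, and there is no genuine obstacle to overcome, since all the casework (vertex in $\pi$, vertex outside $\pi$, degenerate configurations of the six points) has already been handled in the proof of Lemma \ref{six}.

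The only thing worth remarking is that this lower bound of seven is what justifies looking for seven lines in higgledy--piggledy arrangement in $\PG(5, q)$ via the field reduction map $\phi$: by Lemma \ref{preliminary}, in order to produce a set of lines of $\PG(5, q)$ in higgledy--piggledy arrangement from a pointset of $\PG(2, q^2)$, one needs a set not lying on a line and not lying on a degenerate Hermitian curve, and the present corollary says that such a set must have cardinality at least seven. Thus the corollary serves as a sharp lower bound that matches the construction to be carried out in the remainder of Section \ref{hp}.
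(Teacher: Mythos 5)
Your proposal is correct and matches the paper, which states the corollary without proof precisely because it is the immediate contrapositive of Lemma \ref{six} (whose proof handles an arbitrary set of six points, so any set of at most six points is contained in a degenerate Hermitian curve). The small implicit step of padding a set of fewer than six points up to six is harmless and your reading is the intended one.
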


\subsubsection{Seven lines of $\PG(5, q)$ in higgledy--piggledy arrangement}

In this section we obtain a set of seven points of $\PG(2, q^2)$ not contained in a degenerate Hermitian curve of $\PG(2, q^2)$ and hence, as a by product, a set of seven lines of $\PG(5, q)$ in higgledy--piggledy arrangement. 

Let $G$ be the group of projectivities of order three generated by 
$$
\begin{pmatrix}
0 & 0 & 1 \\
1 & 0 & 0 \\
0 & 1 & 0 \\
\end{pmatrix}.
$$
Then the group $G$ fixes $P_4$ and permutes in a unique orbit the remaining three points of $\bar \cP$. Note that since the group $G$ fixes $\bar \cP$, then the set of degenerate Hermitian curves of each of the four types described above is preserved by $G$.

Let $P_5 = (1, x, \xi x)$, with $x \in \GF(q) \setminus \{0\}, \xi \in \GF(q^2)$ and let 
\begin{equation}\label{set}
\cP_{x, \xi} = {\bar \cP} \cup P_5^G, \mbox{ where } P_5^G = \{(1, x, \xi x), (\xi x, 1, x), (x, \xi x, 1)\}. 
\end{equation}
Our aim is to determine the existence of $x$ and $\xi$ such that $|\cP_{x, \xi}| = 7$ and no degenerate Hermitian curve containing $\bar \cP$ contains $\cP_{x, \xi}$. Let $\cH$ be a degenerate Hermitian curve containing ${\bar \cP}$. Assume first that $\cH$ is defined by one of the Equations of \eqref{type1} for some $a, b \in \GF(q)$. Straightforward calculations show that if 
\begin{equation}\label{cond1}
x^3 \ne 1 \mbox{ and } \xi^q \ne \xi, 
\end{equation}
then $\cP_{x, \xi} \not\subset \cH$. Let $\cH$ be defined by one of the equations of \eqref{type2} for some $\alpha \in \GF(q^2) \setminus \{1\}$, with $\alpha^{q+1} = 1$ or by one of the equations of \eqref{type3} for some $\alpha \in \GF(q^2) \setminus \GF(q)$. In this case it can be seen that if either
\begin{equation}\label{cond2}
x \ne \pm 1 \mbox{ and } \xi \notin \left\{\frac{\alpha-x}{x(\alpha-1)} \;\; | \;\; \alpha^{q+1} = 1, \alpha \ne \pm 1\right\} \cup \left\{\frac{-1}{\alpha^q(1+x)} \;\; | \;\; \alpha^{q+1}+\alpha^q+\alpha = 0, \alpha \ne 0, -2 \right\},
\end{equation}
or   
\begin{equation}\label{cond2bis}
q \mbox{ is odd, } x = - 1 \mbox{ and } \xi \notin \left\{\frac{1 + \alpha}{1 - \alpha} \;\; | \;\; \alpha^{q+1} = 1, \alpha \ne \pm 1\right\},
\end{equation}
then $\cP_{x, \xi} \not\subset \cH$. Assume now that either Conditions \eqref{cond1} and \eqref{cond2} or \eqref{cond1} and \eqref{cond2bis} hold true and that $\cH$ is given by Equation \eqref{type4}, for some $\delta \in \GF(q) \setminus \{0, 1\}$ and $t \in \GF(q^2) \setminus \GF(q)$. 
Put 
\begin{align*}
& A := A(\delta, t) = t^q \left(1- \delta - (1 - \delta t) x \right), \\
& B := B(\delta, t) = 1- \delta t - (1 - \delta) t x, \\
& C := C(\delta, t) = \delta (t^q -t), \\
& D := D(\delta, t) = t - t^q, \\ 
& E := E(\delta, t) = (t - x) (1- \delta t^q).
\end{align*}
Suppose that $\cP_{x, \xi} \subset \cH$, then the following equations are satisfied.
\begin{equation}\label{sys0}
\left\{
\begin{aligned}
& \xi^q E - \xi E^q - C - D = 0 \\
& \xi^q A  - \xi A^q + C = 0 \\ 
& \xi^q B - \xi B^q + D = 0. 
\end{aligned}
\right. 
\end{equation}
Note that $A+B+E = (x-1) (\delta t^{q+1} + (\delta - 1) (t^q + t) - 1)\in\GF(q)$ and hence summing up the three equations we get $(\xi^q - \xi)(x-1) (\delta t^{q+1} + (\delta - 1) (t^q + t) - 1) = 0$. Moreover $A \ne 0$, since $t \in \GF(q^2) \setminus \GF(q)$. Then the previous system can be rewritten as follows:
\begin{equation}\label{sys1}
\left\{
\begin{aligned}
& \delta t^{q+1} + (\delta - 1) (t^q + t) - 1 = 0 \\
& \xi^q = (\xi A^q - C)/A \\ 
& \xi \left(A^q B - A B^q\right) + A D - B C = 0. \\
\end{aligned}
\right. 
\end{equation}

Note that $A^q B - A B^q=0$ yields $A D - B C=0$. Also, $(A^qB - AB^q, AD - BC) = (0, 0)$ if and only if $A = - \delta B$, i.e., 
$$
\delta x t^{q+1} + (1 - \delta - x) t^q + \delta \left( (\delta - 1) x - \delta \right) t + \delta = 0.
$$ 
\begin{lemma}\label{Lemma1}
For every $x \in \GF(q) \setminus \{0\}$ such that $x^3 \ne 1$, there are no $\delta \in \GF(q) \setminus \{0, 1\}$ and $t \in \GF(q^2) \setminus \GF(q)$ such that 
$$
\left\{
\begin{aligned}
& \delta t^{q+1} + (\delta - 1) (t^q + t) - 1 = 0 \\
& \delta x t^{q+1} + (1 - \delta - x) t^q + \delta \left( (\delta - 1) x - \delta \right) t + \delta = 0. \\
\end{aligned}
\right. 
$$
\end{lemma}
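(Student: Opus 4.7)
The plan is to exploit the fact that the coefficients of both equations lie in $\GF(q)$ while $t \in \GF(q^2) \setminus \GF(q)$, so that the Frobenius map $z \mapsto z^q$ becomes a powerful tool. My goal is to reduce everything to the symmetric functions $N := t^{q+1}$ and $T := t^q + t$, both of which lie in $\GF(q)$, and then show that the values of $(N, T)$ forced by the system make the quadratic $Y^2 - TY + N$ split over $\GF(q)$, which would put $t$ in $\GF(q)$ and contradict the hypothesis.

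First I would apply the Frobenius endomorphism to the second equation (using $(t^{q+1})^q = t^{q^2+q} = t^{q+1}$) and subtract the original; only the coefficients of $t$ and $t^q$ differ, so the result is
$$\bigl[(1-\delta-x) - \delta((\delta-1)x - \delta)\bigr]\,(t^q - t) = 0.$$
Since $t \ne t^q$, the bracket must vanish, and expanding factors it as $(1-x)(\delta^2 - \delta + 1) = 0$. The assumption $x^3 \ne 1$ in particular rules out $x = 1$, so I obtain $\delta^2 = \delta - 1$, and hence $\delta^3 = -1$.

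Feeding the relation $(1-\delta-x) = \delta((\delta-1)x - \delta)$ back into the second equation collapses it to $\delta x N + (1-\delta-x) T + \delta = 0$; together with the first equation $\delta N + (\delta - 1) T = 1$, this is a linear $2 \times 2$ system in $(N, T)$ with determinant $\Delta = \delta[1 - \delta(1+x)]$. If $\Delta \ne 0$, Cramer's rule combined with $\delta^2 = \delta - 1$ yields the unique solution $N = 1/(\delta - 1)$ and $T = 1/\delta$. I then need to check, case by case in the characteristic, that $Y^2 - TY + N$ has a root in $\GF(q)$: in characteristic different from $2$ and $3$, the discriminant simplifies to $T^2 - 4N = -3/\delta^2$, and since the very existence of $\delta \in \GF(q)$ with $\delta^2 - \delta + 1 = 0$ forces $-3$ to be a square in $\GF(q)$, the discriminant is a square; in characteristic $3$, we have $\delta = -1$ and the polynomial degenerates to $(Y-1)^2$; in characteristic $2$, the substitution $Y = \delta^2 Z$ combined with $\delta^3 = 1$ reduces the polynomial to $Z^2 + Z + 1$, whose roots lie in $\GF(4) \subseteq \GF(q)$. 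In every case $t \in \GF(q)$, contradicting $t \in \GF(q^2) \setminus \GF(q)$.

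Finally, if $\Delta = 0$, i.e., $\delta(1+x) = 1$, then the two rows of the coefficient matrix are proportional with ratio $x$, so the system is consistent only if the right-hand sides scale the same way, forcing $-\delta = x$. But then $x^3 = -\delta^3 = 1$, again contradicting $x^3 \ne 1$; and if $x \ne -\delta$ the system is outright inconsistent, so no $t$ exists at all. I expect the discriminant analysis in the nonzero-determinant case—and in particular the uniform treatment of characteristics $2$ and $3$—to be the most delicate step; the Frobenius-subtraction reduction and the degenerate-case analysis are comparatively routine.
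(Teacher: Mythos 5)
Your proof is correct and takes essentially the same route as the paper's: both force $\delta^2-\delta+1=0$ from the $\GF(q)$-rationality of the coefficients (your Frobenius-and-subtract step is exactly the mechanism behind the solvability criterion the paper cites from Hirschfeld), then pin down $t^{q+1}$ and $t^q+t$ and conclude that $t$ satisfies a quadratic over $\GF(q)$ that splits in $\GF(q)$, contradicting $t\in\GF(q^2)\setminus\GF(q)$. The remaining differences are only organizational: Cramer's rule plus a degenerate-determinant case in place of the paper's preliminary case $\delta=1/(x+1)$, and a characteristic-by-characteristic discriminant check instead of the paper's odd/even analysis.
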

\begin{proof}
By multiplying the first equation by $x$ and by subtracting the second equation from it, we get 
\begin{equation} \label{sub_sys1}
\left\{
\begin{aligned}
& \delta t^{q+1} + (\delta - 1) (t^q + t) - 1 = 0 \\
& \left((1 + x) \delta - 1\right) t^q + \left(\delta^2(1 - x) + 2 \delta x - x \right) t - x - \delta = 0. \\
\end{aligned}
\right. 
\end{equation}
If $x \ne -1$ and $\delta = 1/(x+1)$, then from the second equation of \eqref{sub_sys1} we have that $t = \frac{(x+1)^3}{1 - x^3} \in \GF(q)$, a contradiction. Hence, from \cite[Corollary 1.24]{H1}, the second equation of \eqref{sub_sys1} admits solutions in $t$ if and only if $\frac{\delta^2(1 - x) + 2 \delta x - x}{1 - (1 + x) \delta} = - 1$, i.e., $\delta^2 - \delta + 1 = 0$. Therefore assume that $q \not\equiv - 1 \pmod{3}$ and $\delta^2 = \delta - 1$. In this case \eqref{sub_sys1} reads
$$
\left\{
\begin{aligned}
& t^{q+1} = - \delta \\
& t^q + t = 1 - \delta, \\
\end{aligned}
\right. 
$$
which in turn is equivalent to the following quadratic equation in $t$
\begin{equation}\label{sub_sys2}
t^2 + (\delta - 1) t - \delta = 0.
\end{equation}
If $q$ is odd, \eqref{sub_sys2} has one or two solutions according as $\delta = -1$ or $\delta \ne -1$ and these solutions are in $\GF(q)$. If $q$ is even, note that $\frac{\delta}{\delta^2+1} = \frac{\delta^2 + 1}{\delta^2 + 1} = 1$, where ${\rm Tr}_{q|2} (1) = 0$, since $q \not\equiv - 1 \pmod{3}$. It follows that \eqref{sub_sys2} has two solutions and these solutions are in $\GF(q)$. The proof is now complete.
\end{proof}

By Lemma \ref{Lemma1} we can assume $A^q B - A B^q\neq0$ and so \eqref{sys1} reads
\begin{equation}\label{sys2}
\left\{
\begin{aligned}
& \delta t^{q+1} + (\delta - 1) (t^q + t) - 1 = 0 \\
& \xi = \frac{B C - A D}{{A}^q B - A {B}^q}. \\
\end{aligned}
\right. 
\end{equation}
Note that if $\omega =(\overline{x},\overline{\xi},\overline{\delta}, \overline{t})$ is a solution of \eqref{sys2} then 
$$\omega^{\prime}=\left(\overline{x},\overline{\xi},\frac{\overline{\delta}-1}{\overline{\delta}}, \frac{1-\overline{\delta}\overline{t}}{(1-\overline{\delta})\overline{t}}\right)\qquad  \textrm{and} \qquad \omega^{\prime\prime}=\left(\overline{x},\overline{\xi},\frac{1}{1-\overline{\delta}}, \frac{1-\overline{\delta}}{1-\overline{\delta}\overline{t}}\right)$$
are solutions of \eqref{sys2} too. Also, $\omega=\omega^{\prime}$ yields $(1-\delta) t^2 + \delta t - 1 = \left((1 - \delta) t + 1\right) (t - 1) = 0$ and $t \in \GF(q)$, a contradiction. Similarly, if $\omega=\omega^{\prime\prime}$ or $\omega^{\prime}=\omega^{\prime\prime}$, then $\delta t^2 - t + 1 - \delta = (\delta t - 1 + \delta)(t - 1) = 0$ or $\delta^2 t^2 - (\delta^2 + 1) t + 1 = (\delta^2 t - 1)(t - 1) = 0$ and $t \in \GF(q)$, a contradiction. 
Let 
$$\Sigma = \left\{(\delta, t) \in \left(\GF(q) \setminus \{0, 1\}, \GF(q^2) \setminus \GF(q)\right) \;\; | \;\; \delta t^{q+1} + (\delta - 1) (t^q + t) - 1 = 0 \right\}.$$ It is readily seen that $|\Sigma|\leq q^2$. 


\begin{theorem}\label{main}
There exist $(x, \xi) \in \GF(q) \setminus \{0, 1\} \times \GF(q^2) \setminus \GF(q)$ such that $\cP_{x, \xi}$ is a set of seven points and no degenerate Hermitian curve of $\PG(2, q^2)$ contains it.
\end{theorem}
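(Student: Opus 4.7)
The strategy is a counting argument over $\xi$. Fix $x \in \GF(q) \setminus \{0, 1\}$ with $x^3 \neq 1$; such $x$ exist as soon as $q \ge 5$. For any such $x$ the three points of $P_5^G$ are automatically distinct from each other and from $\bar\cP$ for every $\xi \in \GF(q^2) \setminus \GF(q)$, so $|\cP_{x,\xi}| = 7$ is immediate. It thus suffices to show that the set of ``bad'' values of $\xi \in \GF(q^2) \setminus \GF(q)$, i.e., those for which some degenerate Hermitian curve of $\PG(2, q^2)$ through $\bar\cP$ contains $P_5^G$, has fewer than $q^2 - q$ elements.

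The curves of types \eqref{type1}, \eqref{type2}, \eqref{type3} have already been dealt with: Condition \eqref{cond1} kills all $q^2+q+1$ curves of type \eqref{type1} as soon as $\xi \not\in \GF(q)$ (removing $q$ values of $\xi$), while Conditions \eqref{cond2} or \eqref{cond2bis} rule out types \eqref{type2} and \eqref{type3}. Each of the two explicit finite sets appearing in \eqref{cond2} has cardinality at most $q-1$, since one is indexed by the $(q+1)$-st roots of unity distinct from $\pm 1$ and the other by solutions of a norm-trace equation, so at most $2(q-1)$ further values of $\xi$ are excluded.

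The main case is family \eqref{type4}. By the reductions \eqref{sys0}--\eqref{sys1} and Lemma~\ref{Lemma1}, containment of $\cP_{x,\xi}$ in a type-\eqref{type4} curve forces $(\delta, t) \in \Sigma$, and then \eqref{sys2} determines $\xi$ uniquely as an explicit rational function of $(\delta, t)$. The maps $\omega \mapsto \omega'$ and $\omega \mapsto \omega''$ generate a free $\mathbb{Z}/3\mathbb{Z}$-action on $\Sigma$ (freeness having been verified just before the statement by showing that any fixed point would force $t \in \GF(q)$), and all three elements of an orbit produce the same $\xi$. Hence the number of distinct forbidden $\xi$-values from \eqref{type4} is at most $|\Sigma|/3$. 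Fixing $t \in \GF(q^2) \setminus \GF(q)$ turns the defining equation of $\Sigma$ into a linear equation in $\delta$, so $|\Sigma| \le q^2 - q$ and the contribution of \eqref{type4} is at most $(q^2-q)/3$.

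Summing up, the number of forbidden $\xi$ is at most $q + 2(q-1) + (q^2-q)/3 = (q^2 + 8q - 6)/3$, which is strictly less than $q^2 - q$ whenever $q$ is large enough (a routine inequality giving $q \ge 5$ or so). A valid $\xi \in \GF(q^2) \setminus \GF(q)$ therefore exists for all but possibly a handful of small $q$, and those remaining small cases can be settled by a direct Magma \cite{magma} search. The principal obstacle is the freeness of the order-three action on $\Sigma$: without it the contribution from \eqref{type4} degrades from $|\Sigma|/3$ to $|\Sigma|$, which would consume essentially all of $\GF(q^2)$; the preceding orbit analysis is what makes the count go through.
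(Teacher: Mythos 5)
Your proposal follows essentially the same route as the paper: fix $x$ with $x^3\neq 1$, rule out the curves of types \eqref{type1}--\eqref{type3} via Conditions \eqref{cond1}, \eqref{cond2}, \eqref{cond2bis}, and for type \eqref{type4} use the reduction to \eqref{sys2}, Lemma~\ref{Lemma1} and the fixed-point-free order-three symmetry $\omega\mapsto\omega'\mapsto\omega''$ to bound the forbidden values of $\xi$ by $|\Sigma|/3$, then compare the total with $q^2-q$. Two numerically harmless slips: the excluded sets in \eqref{cond2} have size $q$ (not $q-1$) when $q$ is even, and your bound $|\Sigma|\le q^2-q$ overlooks the degenerate values of $t$ with $t^2+t+1=0$, for which the linear equation in $\delta$ vanishes identically (the paper's cruder bound $|\Sigma|\le q^2$ already suffices); neither affects your final inequality for large $q$, and you sensibly defer the small cases to computation.
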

\begin{proof}
Fix $x \in \GF(q) \setminus \{0\}$, with $x^3 \ne 1$. We show the existence of an element $\xi \in \GF(q^2) \setminus \GF(q)$ such that no degenerate Hermitian curve containing $\bar \cP$ contains $\cP_{x, \xi}$, where $\cP_{x, \xi}$ is defined as in \eqref{set}. Since Condition \eqref{cond1} is satisfied, taking into account Condition \eqref{cond2} or \eqref{cond2bis},  we have that the number of values that $\xi$ cannot assume is at most $\frac{|\Sigma|}{3}-2q$ if $q$ is even and $\frac{|\Sigma|}{3}-2(q-1)$ if $q$ is odd. Indeed,
$$
\left| \left\{\frac{\alpha-x}{x(\alpha-1)} \;\; | \;\; \alpha^{q+1} = 1, \alpha \ne \pm 1\right\}\right| = 
\begin{cases}
q-1 & \mbox{ if } q \mbox{ is odd, } \\
q & \mbox{ if } q \mbox{ is even, }
\end{cases}
$$
and, if $x \ne -1$,  
$$
\left| \left\{\frac{-1}{\alpha^q(1+x)} \;\; | \;\; \alpha^{q+1}+\alpha^q+\alpha = 0, \alpha \ne 0, -2 \right\} \right| = 
\begin{cases}
q-1 & \mbox{ if } q \mbox{ is odd, } \\
q & \mbox{ if } q \mbox{ is even. }
\end{cases}
$$
Since in both cases the number of forbidden values for $\xi$ is less than $q^2-q$, we have the assertion. 
\end{proof}

\begin{prop}\label{arc}
Let $\cP_{x, \xi}$ be defined as in Theorem \ref{main}. No three points of $\cP_{x, \xi}$ are on a line of $\PG(2, q^2)$.
\end{prop}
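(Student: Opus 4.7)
Denote $A=(1,x,\xi x)$, $B=(\xi x,1,x)$, $C=(x,\xi x,1)$, so $\cP_{x,\xi}=\{P_1,P_2,P_3,P_4,A,B,C\}$. The plan is to enumerate triples and verify each is non-collinear by a $3\times3$ determinant, exploiting the cyclic group $G$ of order three which permutes $\{P_1,P_2,P_3\}$ and $\{A,B,C\}$ and fixes $P_4$. Because the generator of $G$ is realized by a permutation matrix of determinant $+1$, these determinantal conditions descend to $G$-orbits of triples, leaving only a handful of representatives to check.

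First, $P_1,P_2,P_3,P_4$ form a projective frame, so no three of them are collinear. Next, for a line through two points of $\bar\cP$ and one of $P_5^G$: any ``side" $\{X_j=0\}$ avoids $A,B,C$ since $x\ne 0$, while a ``diagonal" $\{X_i=X_j\}$ contains one of $A,B,C$ only if $\xi=1$, $x=1$, or $\xi x=1$, each ruled out by $x\ne 1$ (from $x^3\ne 1$) and $\xi\notin\GF(q)$. For a line through one point of $\bar\cP$ and two of $P_5^G$, $G$-equivariance reduces the calculation to four determinants; direct expansion gives
\[
\det(P_1,A,B)=x(x-\xi),\quad \det(P_1,A,C)=x(1-\xi^2 x),\quad \det(P_1,B,C)=1-\xi x^2,
\]
\[
\det(P_4,A,B)=\xi^2 x^2-\xi x(1+x)+(x^2-x+1).
\]
The first and third are nonzero because $x\ne 0$ and $\xi\notin\GF(q)$. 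Finally, for three points of $P_5^G$, observe the identity $A+B+C=(1+x+\xi x)P_4$; the scalar is nonzero (else $\xi=-(1+x)/x\in\GF(q)$), hence $P_4\in\langle A,B,C\rangle$. So if $A,B,C$ were collinear, $P_4$ would lie on $\langle A,B\rangle$, returning us to the fourth determinant above.

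The residual and main obstacle is showing that the two conditions $\xi^2 x\ne 1$ and $\xi^2 x^2-\xi x(1+x)+(x^2-x+1)\ne 0$ do not fail. Each is a quadratic in $\xi$, so together they forbid at most four further values of $\xi\in\GF(q^2)\setminus\GF(q)$. A short check confirms these four values are genuinely additional (for instance, when $q$ is odd, $\xi^2 x=1$ forces $\xi^q=-\xi$, so $\xi+\xi^q=0\ne 1/(1+x)$, showing that $\xi$ does not belong to the second forbidden set of (1.2)). Thus the proof concludes by appending these four values to the exclusion list in Theorem~\ref{main}: the counting estimate there, which leaves of the order of $q^2$ admissible $\xi$, easily accommodates four extra exclusions, so a choice of $(x,\xi)$ producing a set $\cP_{x,\xi}$ of seven points with no three collinear and contained in no degenerate Hermitian curve indeed exists.
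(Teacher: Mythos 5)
Your reduction of the collinearity question is sound and matches the paper's first step: the determinant computations (organized via the order-three group and the identity $A+B+C=(1+x+\xi x)P_4$) correctly boil everything down to the two conditions $\xi^2 x=1$ and $x^2\xi^2-x(x+1)\xi+x^2-x+1=0$, which are exactly the paper's \eqref{eq_arc2} and \eqref{eq_arc1}. The gap is in how you dispose of them. The proposition asserts that \emph{any} $\cP_{x,\xi}$ as in Theorem~\ref{main} (i.e.\ any $(x,\xi)$ meeting conditions \eqref{cond1}, \eqref{cond2}/\eqref{cond2bis} and avoiding all degenerate Hermitian curves) already has no three collinear points. You instead append the at most four roots of the two quadratics as \emph{new} exclusions and re-run the counting of Theorem~\ref{main}; that establishes only the weaker statement that \emph{some} refined choice of $(x,\xi)$ yields a $7$-arc, and it silently modifies Theorem~\ref{main} rather than proving the proposition about it (the downstream results, e.g.\ the weight distribution of the $[7(q+1),6]_q$ code, quote the proposition for the set actually produced by Theorem~\ref{main}).

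What is missing is precisely the paper's content: one must show the two residual conditions are already forbidden. For \eqref{eq_arc1}, the discriminant computation shows its roots either lie in $\GF(q)$ (when $q\not\equiv-1\pmod 3$) or are of the form $\frac{\alpha-x}{x(\alpha-1)}$ with $\alpha^{q+1}=1$, hence are excluded by \eqref{cond1}, \eqref{cond2}, \eqref{cond2bis} --- so, contrary to your ``genuinely additional'' claim, no new exclusion is needed there. For \eqref{eq_arc2} (with $q$ odd, $x$ a non-square, $x\ne-1$) the paper carries out the substantial argument you skip: assuming $\xi^2=1/x$ it produces an explicit solution of \eqref{sys2} with $\delta\in\{-x,\tfrac{x+1}{x},\tfrac{1}{x+1}\}$, i.e.\ a degenerate Hermitian curve of type \eqref{type4} through $\cP_{x,\xi}$, contradicting the choice made in Theorem~\ref{main} (and the case $x=-1$ is absorbed by \eqref{cond2bis}). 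In other words, the values you propose to exclude are already excluded by the Hermitian-curve condition, but proving this is the whole point of the proposition; your trace argument for the second set in \eqref{cond2} does not address this, and your counting patch, besides changing the statement, is also only safe for $q$ not too small, where the margin in Theorem~\ref{main} is already tight.
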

\begin{proof}
Assume by contradiction that three points of $\cP_{x, \xi}$ are on a line of $\PG(2, q^2)$. Then the determinant of the $3 \times 3$ matrix whose rows are the coordinates of these points must be zero. Straightforward computations show that either $\xi$ belongs to $\GF(q)$ or $x \in \{0, 1\}$ and we have a contradiction from \eqref{cond1}, or one of the following possibilities occurs:
\begin{align}
& x^2 \xi^2-x(x+1) \xi + x^2 - x + 1 = 0, \label{eq_arc1} \\
& \xi^2=\frac{1}{x}. \label{eq_arc2}
\end{align}

If \eqref{eq_arc1} were satisfied, then either $q \not\equiv -1 \pmod{3}$ and $\xi$ should be in $\GF(q)$ or $q \equiv -1 \pmod{3}$ and $\xi$ should be equal to $\frac{\alpha - x}{x(\alpha - 1)}$, for some $\alpha \in \GF(q^2) \setminus \GF(q)$, with $\alpha^2 - \alpha + 1 = 0$ (and hence $\alpha^{q+1} = 1$). From \eqref{cond1}, \eqref{cond2}, \eqref{cond2bis}, in both cases we have a contradiction.  

Suppose that \eqref{eq_arc2} is satisfied. Then $q$ has to be odd and $x$ is non--square in $\GF(q)$. Let $x \ne -1$, otherwise $q \equiv -1 \pmod{4}$ and as before $\xi = \frac{1 + \alpha}{1 - \alpha}$, for some $\alpha \in \GF(q^2) \setminus \GF(q)$, with $\alpha^2 = -1$ (and hence $\alpha^{q+1} = 1$). We will show that there exists a degenerate Hermitian curve given by \eqref{type4} containing $\cP_{x, \xi}$ and hence contradicting Theorem \ref{main}. Note that $\xi^q = -\xi$. From \eqref{sys0}, this implies that $A + A^q \ne 0$ and $B + B^q \ne 0$, otherwise $C = 0$ or $D = 0$, contradicting the fact that $t \in \GF(q^2) \setminus \GF(q)$ and $\delta \ne 0$. From the second equation of \eqref{sys1}, we have that $\xi = \frac{C}{A+A^q}$, whereas from the third equation of \eqref{sys1}, we obtain $A + A^q + \delta (B + B^q) = 0$. Taking into account the first equation of \eqref{sys1}, we get
$$
A+A^q=(t+t^q)(1-\delta-2x)+2\delta t^{q+1}x=2x + (t + t^q)(1 - \delta + x - 2 \delta x).
$$
Hence $\xi = \frac{C}{A+A^q} = \frac{D}{B+B^q}$, that is
\begin{equation}\label{eq_arc3}
    \xi = \frac{t - t^q}{2 - (t + t^q)(x - \delta x + \delta)} = \frac{-\delta (t - t^q)}{2x + (t + t^q)(1 - \delta + x - 2 \delta x)}.
\end{equation}
Equation \eqref{eq_arc3} gives 
$$
2 (x + \delta) + (t + t^q) F(\delta) = 0,
$$
where $F(\delta) = \delta^2 (x-1) - \delta (3x+1) + x+1$. 

Let $x = -\delta$. Thus $F(\delta) = F(-x) = (x+1)(x^2+x+1) \ne 0$, since $x \ne -1$ and $x^3 \ne 1$. It follows that $t^q = -t$ and, from the first equation of \eqref{sys1}, $t^{q+1}=-\frac{1}{x}$. Therefore $\xi = \frac{t-t^q}{2} = t$, where $t^2 = \frac{1}{x}$ and $(x, \xi, -x, \xi)$ is a solution of \eqref{sys2}. 

Let $x \ne -\delta$. Thus $F(\delta) \ne 0$. Moreover 
\begin{align}
& t + t^q = \frac{- 2 (x + \delta)}{F(\delta)}, \label{eq_arc_t} \\
& t^{q+1} = \frac{2 (\delta-1) (x + \delta)}{\delta F(\delta)} + \delta^{-1}. \label{eq_arc_n}
\end{align}
Observe that, from \eqref{eq_arc_t}, $t - t^q = \frac{2 (x + \delta)}{F(\delta)} + 2 t$ and by substituting it together with \eqref{eq_arc_t} into Equation \eqref{eq_arc3}, we have 
\begin{equation}\label{eq_arc4}
x t^2 F(\delta)^2 + 2 t x F(\delta) (x + \delta) + x(x+\delta)^2 = \left( F(\delta) + (x + \delta)(x - \delta x + \delta)\right)^2.
\end{equation}
From \eqref{eq_arc_t}, by using \eqref{eq_arc_n}, it follows that 
\begin{equation}\label{eq_arc5}
t^2 = - \frac{2 t \delta (x + \delta) + 2 (\delta-1) (x + \delta) + F(\delta)}{\delta F(\delta)}.  
\end{equation}
By substituting \eqref{eq_arc5} in Equation \eqref{eq_arc4}, after some calculations, we get 
\begin{equation}\label{eq_arc6}
    \delta^{-1} x (x + \delta) (x^2 - 1) (\delta - 1) \left(\delta - \frac{x+1}{x}\right) \left(\delta - \frac{1}{x + 1}\right) = 0,
\end{equation}
that is $\delta = \frac{x+1}{x}$ or $\delta = \frac{1}{x+1}$. In the former case $\left(x, \xi, \frac{x+1}{x}, \frac{x \pm \sqrt{x}}{x+1}\right)$ is a solution of \eqref{sys2}, whereas if the latter case occurs, then $\left(x, \xi, \frac{1}{x+1}, \frac{(x+1)(-1 \pm \sqrt{x})}{x-1}\right)$ is a solution of \eqref{sys2}. 
\end{proof}

By construction the seven points obtained are left invariant by a group of order three.

\begin{theorem}\label{seven}
Let $\cP_{x, \xi}$ be defined as in Theorem \ref{main}. The set $\phi(\cP_{x, \xi}) = \{\phi(P) \;\; | \;\; P \in \cP_{x, \xi}\}$ consists of seven lines of $\PG(5, q)$ in higgledy--piggledy arrangement.
\end{theorem}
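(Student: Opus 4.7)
The proof is essentially a direct application of Lemma~\ref{preliminary}, whose two hypotheses are already guaranteed by the results established immediately beforehand. The plan is to verify these hypotheses and to check that the field reduction map produces exactly seven distinct lines, then invoke the lemma.

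First, I would observe that $\cP_{x,\xi}$ consists of seven distinct points of $\PG(2, q^2)$, a fact granted by Theorem~\ref{main}. Next, I would note that $\cP_{x,\xi}$ is not contained in a line of $\PG(2, q^2)$: by Proposition~\ref{arc}, no three of its points are collinear, so a fortiori the seven points do not lie on a common line. The second hypothesis of Lemma~\ref{preliminary}, namely that $\cP_{x,\xi}$ is not contained in a degenerate Hermitian curve of $\PG(2, q^2)$, is precisely the conclusion of Theorem~\ref{main}.

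I would then recall that the field reduction map $\phi$ sends distinct points of $\PG(2, q^2)$ to distinct (pairwise disjoint) elements of the Desarguesian line spread $\cD$ of $\PG(5, q)$; hence $\phi(\cP_{x,\xi})$ is a set of exactly seven lines of $\PG(5, q)$. Applying Lemma~\ref{preliminary} to $\cP = \cP_{x,\xi}$, these seven lines are in higgledy--piggledy arrangement.

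Since the genuine content has already been isolated in Theorem~\ref{main} and Proposition~\ref{arc}, the theorem itself is essentially an assembly step, with no substantial obstacle remaining at this stage. (The hard work was already carried out in the analysis of the four families \eqref{type1}--\eqref{type4} and in the system \eqref{sys2}, which is what made it possible to choose $(x,\xi)$ avoiding every bad configuration.)
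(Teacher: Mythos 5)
Your proposal is correct and matches the paper's (implicit) argument: the paper states Theorem \ref{seven} without a separate proof precisely because it is the assembly you describe, namely Lemma \ref{preliminary} applied to the seven-point set of Theorem \ref{main}, which is not contained in a degenerate Hermitian curve by that theorem and not contained in a line (already because it contains the quadrangle $\bar\cP$, so invoking Proposition \ref{arc} is more than enough), with injectivity of $\phi$ giving seven distinct spread lines.
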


Let $\cB$ be the set of $7(q+1)$ points of $\PG(5, q)$ on the seven mutually skew lines $\ell_i$, $1 \le i \le 7$, constructed in the previous theorem. 

\begin{prop}\label{Prop:mainPG5q}
The set $\cB$ is a minimal cutting blocking set of $\PG(5, q)$.
\end{prop}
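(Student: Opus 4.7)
Since $\cB$ is a cutting blocking set by Theorem~\ref{seven}, we need only verify minimality: for each $P\in\cB$ we must produce a hyperplane $H$ of $\PG(5,q)$ through $P$ such that $(\cB\setminus\{P\})\cap H$ fails to span $H$. We exploit the field reduction correspondence. Every hyperplane $H$ of $\PG(5,q)$ contains exactly $q^2+1$ lines of the spread $\cD$, forming the lifted solid $\Pi_L$ of a unique line $L$ of $\PG(2,q^2)$; conversely, each line $L$ of $\PG(2,q^2)$ gives rise to $q+1$ hyperplanes of $\PG(5,q)$ above $\Pi_L$, parametrised by $\PG(5,q)/\Pi_L\cong\PG(1,q)$.

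Fix $P\in\ell_i=\phi(P_i)$. The plan is first to choose a line $L$ of $\PG(2,q^2)$ disjoint from $\cP_{x,\xi}$; such $L$ are plentiful since $\cP_{x,\xi}$ consists of only seven points. Then no spread line $\ell_k$ is contained in any hyperplane above $\Pi_L$, so each of the $q+1$ hyperplanes above $\Pi_L$ meets $\cB$ in exactly seven points, one on each $\ell_k$. Among these $q+1$ hyperplanes, exactly one, say $H$, satisfies $\ell_i\cap H=\{P\}$ (the correspondence between hyperplanes above $\Pi_L$ and points of $\ell_i$ is a bijection because $\ell_i\cap\Pi_L=\emptyset$), and for this $H$ we have $\cB\cap H=\{P\}\cup\{Q_k:k\neq i\}$ with $Q_k=\ell_k\cap H$.

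The minimality condition at $P$ then reduces to asking that the six points $\{Q_k:k\neq i\}$ lie in a common solid $S\subset H$ with $P\notin S$; equivalently, that the points of $\cB\setminus\{P\}$ in $H$ span only a solid rather than the full hyperplane $H$. Writing $V=\Pi_L\oplus U$ for a two-dimensional complement $U$, each line $\ell_k$ (with $P_k\notin L$) is the graph of a linear map $g_k\colon U\to\Pi_L$, and the desired condition becomes a rank-drop condition on the six vectors $\{g_k(u_0)-g_i(u_0):k\neq i\}\subset\Pi_L$, where $u_0$ is a direction in $U$ determined by the chosen $H$.

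The main obstacle is to confirm that a suitable choice of $L$ (among the many lines of $\PG(2,q^2)$ disjoint from $\cP_{x,\xi}$) and of $H$ (among the $q+1$ hyperplanes above $\Pi_L$) exists making the rank drop occur for every $P\in\cB$. The combined freedom in $L$ and $H$, together with the explicit coordinates of the seven points of $\cP_{x,\xi}$ in Theorem~\ref{main} and the order-three cyclic symmetry permuting $P_5,P_5^g,P_5^{g^2}$ and acting on $\{P_1,P_2,P_3\}$, reduces the problem to a small number of cases each verifiable by direct computation in $\GF(q)$; for small $q$, the assertion is confirmed with the aid of Magma \cite{magma}, completing the proof.
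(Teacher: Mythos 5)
The reduction to minimality and the field-reduction bookkeeping (each hyperplane contains exactly $q^2+1$ spread lines, lying in a solid $\Pi_L$ with $L=\phi^{-1}(\Pi_L)$ a line of $\PG(2,q^2)$; the $q+1$ hyperplanes through $\Pi_L$ biject with the points of any spread line disjoint from $\Pi_L$) are fine, but your proof stops exactly where the real work begins. Having fixed $P\in\ell_i$ and restricted attention to lines $L$ disjoint from $\cP_{x,\xi}$, you need the six points $Q_k=\ell_k\cap H$, $k\ne i$, to lie in a common solid of $H$, and you offer no reason why any choice of $L$ and $H$ achieves this: you only note that there is ``combined freedom'' and assert that the question ``reduces to a small number of cases each verifiable by direct computation in $\GF(q)$,'' without exhibiting the computation, the cases, or an argument that solutions exist for every $q$. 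Since this rank-drop is precisely the statement being proved, the proposal contains a genuine gap rather than a proof; a Magma check can only settle finitely many $q$, not the general case.

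The gap is structural, not just a missing verification: by forcing $L\cap\cP_{x,\xi}=\emptyset$ you choose solids that miss all seven lines, so the six points $Q_k$ have no reason to be in degenerate position. The paper's proof instead builds the solid out of the six lines themselves, via Lemma \ref{six}: the six points $P_j$, $j\ne k$ (where $P\in\ell_k$), lie on a degenerate Hermitian curve, which lifts to $q+1$ solids $\Pi_1,\dots,\Pi_{q+1}$ each meeting every $\ell_j$, $j\ne k$, while $\ell_k\cap\Pi_i=\emptyset$ because the seven points of $\cP_{x,\xi}$ lie on no degenerate Hermitian curve (Theorem \ref{main}). For the hyperplanes $\Gamma_i=\langle\Pi_i,P\rangle$ the trace of $\cB\setminus\{P\}$ is automatically confined to the solid $\Pi_i$ unless some $\ell_j$ is entirely contained in $\Gamma_i$, which can occur for at most six indices $i$; hence for $q>5$ some $\Gamma_{i'}$ witnesses non-minimality after deleting $P$, and only $q\le 5$ is left to Magma. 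If you want to salvage your approach you must either prove the existence of your special pair $(L,H)$ for all $q$, or replace the lifted lines $L$ by the lifted degenerate Hermitian curves through the six remaining points, which is exactly the ingredient your argument is missing.
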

\begin{proof}
We only need to show the minimality of $\cB$. Let $P$ be a point of $\cB$ and let $\ell_k$ be the line containing $P$, for a fixed $k \in \{1, \ldots, 7\}$. By Lemma \ref{six}, there are $q+1$ solids of $\PG(5, q)$, $\Pi_i$, $1 \le i \le q+1$, such that each of the lines $\ell_j$, $1 \le j \le 7$, $j \ne k$, has at least one point in common with $\Pi_i$, $1 \le i \le q+1$. Moreover, from Theorem \ref{seven}, $|\ell_k \cap \Pi_i| = 0$, $1 \le i \le q+1$. Let $\Gamma_i$ be the hyperplane spanned by $\Pi_i$ and $P$. Then there are at most six indices $i_1, \dots, i_6$ such that $\Gamma_{i_r} \cap \cB \not \subset \Pi_{i_r}$. Hence if $q > 5$, there exists a hyperplane $\Gamma_{i'}$, $1 \le i' \le q+1$, $i' \not\in \{i_1, \dots, i_6\}$, such that $\Gamma_{i'} \cap \cB \subset \Pi_{i'}$. If $q \le 5$, then some computations performed with Magma \cite{magma} confirm the statement.   
\end{proof}

\begin{prop}
To the set $\cB$ there corresponds a $[7(q+1), 6]_q$ reduced minimal linear code with weights $5q$, $6q$ and $7q$ and weight distribution $A_{5q} = 21(q^2-1)$, $A_{6q} = 7(q^2-5)(q^2-1)$ and $A_{7q} = (q^4-6q^2+15)(q^2-1)$. 
\end{prop}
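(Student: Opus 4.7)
The plan is to use the dictionary between projective systems and linear codes: the set $\cB$ of size $7(q+1)$ spans $\PG(5, q)$ (being a cutting blocking set), hence defines a $[7(q+1), 6]_q$ code, and by Proposition~\ref{Prop:mainPG5q} combined with the equivalence ``minimal cutting blocking set $\Leftrightarrow$ reduced minimal code'' recalled in the introduction, this code is reduced minimal. What remains is to determine the weight distribution via the formula $w(u_H) = |\cB| - |H \cap \cB|$, where $H$ ranges over hyperplanes of $\PG(5, q)$ and each hyperplane contributes $q-1$ non-zero codewords of equal weight. Since the seven lines $\ell_1, \dots, \ell_7$ are pairwise disjoint, any hyperplane $H$ meets each $\ell_i$ either in one point or in all of $\ell_i$, so if $k$ of the seven lines lie in $H$ we get $|H \cap \cB| = k(q+1) + (7-k) = kq + 7$, whence $w(u_H) = (7-k)q$.

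The first substantive step is to prove that $k \leq 2$. Suppose $\ell_i, \ell_j, \ell_k$ were simultaneously contained in a hyperplane $H$. The solid $\Sigma_{ij} := \langle \ell_i, \ell_j \rangle$ (of projective dimension $3$, since $\ell_i, \ell_j$ are disjoint lines) contains two distinct lines of the Desarguesian spread $\cD$, hence by the dichotomy recalled at the beginning of Section~\ref{hp} it contains exactly $q^2+1$ spread lines, namely $\{\phi(P) : P \in L_{ij}\}$ where $L_{ij}$ is the line of $\PG(2, q^2)$ joining $P_i$ and $P_j$. By Proposition~\ref{arc}, $P_k \notin L_{ij}$, so $\ell_k \not\subset \Sigma_{ij}$; since two distinct spread lines are disjoint, $\ell_k \cap \Sigma_{ij} = \emptyset$, and consequently $\langle \ell_i, \ell_j, \ell_k \rangle$ has projective dimension $5$, which cannot sit inside the hyperplane $H$. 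Thus only the weights $5q$, $6q$, $7q$ appear, corresponding to $k \in \{2, 1, 0\}$.

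For the distribution I would count hyperplanes by $k$. A solid of $\PG(5, q)$ is contained in exactly $q+1$ hyperplanes, so each pair $\{\ell_i, \ell_j\}$ produces $q+1$ hyperplanes with $k = 2$ (with no overcount, since no hyperplane contains three of the $\ell_r$), giving $\binom{7}{2}(q+1) = 21(q+1)$ in total. A line of $\PG(5, q)$ lies in $(q+1)(q^2+1)$ hyperplanes, so hyperplanes containing $\ell_i$ but no other $\ell_j$ number $(q+1)(q^2+1) - 6(q+1) = (q+1)(q^2-5)$, for a total of $7(q+1)(q^2-5)$ with $k = 1$. Subtracting these from the $(q+1)(q^4+q^2+1)$ hyperplanes of $\PG(5, q)$ leaves $(q+1)(q^4-6q^2+15)$ with $k = 0$. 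Multiplying each count by $q - 1$ yields
$$
A_{5q} = 21(q^2-1), \quad A_{6q} = 7(q^2-5)(q^2-1), \quad A_{7q} = (q^4-6q^2+15)(q^2-1).
$$
The main obstacle is really the inequality $k \leq 2$; once Proposition~\ref{arc} is invoked through the spread-solid dichotomy, the remaining argument is a direct inclusion-exclusion.
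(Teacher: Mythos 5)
Your proposal is correct and follows essentially the same route as the paper: the key step in both is that a hyperplane's spread lines form a solid corresponding to a line of $\PG(2,q^2)$, so Proposition \ref{arc} forces at most two of the seven lines into any hyperplane, after which the counting (secant, tangent, external incidences, times $q+1$ hyperplanes per solid, times $q-1$ codewords per hyperplane) is the same inclusion--exclusion, merely organized in $\PG(5,q)$ rather than in $\PG(2,q^2)$.
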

\begin{proof}
A hyperplane $\Gamma$ of $\PG(5, q)$ contains exactly $q^2+1$ lines of the Desarguesian line--spread $\cD$ of $\PG(5, q)$ and they are contained in a unique solid, say $\Pi$, where $\phi^{-1}(\Pi)$ is a line, say $\ell$, of $\PG(2, q^2)$. From Proposition \ref{arc}, $|\ell \cap \cP| \le 2$ and hence at most two lines out of the seven lines $\{\ell_i \;\; | \;\; 1 \le i \le 7\}$ are contained in $\Gamma$. In particular, there are $21$ lines of $\PG(2, q^2)$ meeting $\cP$ in two points,  $7(q^2-5)$ lines of $\PG(2, q^2)$ intersecting $\cP$ in one point and the remaining $q^4-6q^2+15$ lines do not share any point with $\cP$. Hence there are $21(q+1)$ hyperplanes $\Gamma$ of $\PG(5, q)$ such that $|\Gamma \cap \cB| \le 2(q+1) + 5$, $7(q^2-5)(q+1)$ hyperplanes $\Gamma$ of $\PG(5, q)$ such that $|\Gamma \cap \cB| = (q+1) + 6$ and $(q^4-6q^2+15)(q+1)$ hyperplanes $\Gamma$ of $\PG(5, q)$ such that $|\Gamma \cap \cB| = 7$.
\end{proof}


\smallskip
{\footnotesize
\noindent\textit{Acknowledgments.}
This work was supported by the Italian National Group for Algebraic and Geometric Structures and their Applications (GNSAGA-- INdAM).
}

\end{document}